\newtheorem{theorem}{Theorem}[section]
\newtheorem*{theorem*}{Theorem}
\newtheorem*{maintheorem}{Main Theorem}
\newtheorem{lemma}[theorem]{Lemma}
\newtheorem*{morselemma}{Morse Lemma}
\newtheorem{proposition}[theorem]{Proposition}
\newtheorem*{proposition*}{Proposition}
\newtheorem*{brownscriterion}{Brown's Criterion}
\newtheorem{corollary}[theorem]{Corollary}
\newtheorem*{corollary*}{Corollary}
\newtheorem*{conjecture*}{Conjecture}
\theoremstyle{definition}
\newtheorem{definition}[theorem]{Definition}
\newtheorem{remark}[theorem]{Remark}
\newtheorem{observation}[theorem]{Observation}
\newcommand{\Z}{\mathbb{Z}}
\newcommand{\N}{\mathbb{N}}
\newcommand{\R}{\mathbb{R}}
\newcommand{\Poset}{\mathcal{P}}
\newcommand{\spraige}{\mathscr{S}}
\newcommand{\match}{\mathcal{M}}
\newcommand{\matcharc}{\mathcal{MA}}
\DeclareMathOperator{\Stab}{Stab}
\DeclareMathOperator{\lk}{lk}
\DeclareMathOperator{\dlk}{{\lk}{\downarrow}}
\newcommand{\floor}[1]{\lfloor #1 \rfloor}
\newcommand{\elbraigecpx}{\mathcal{EB}}
\newcommand{\velbraigecpx}{\mathcal{VEB}}
\DeclareMathOperator*{\Bigjoin}{\text{\raisebox{-0.65ex}{\Huge $\ast$}}}
\newcommand{\defeq}{\mathrel{\vcentcolon =}}
\DeclareMathOperator{\id}{id}
\DeclareMathOperator{\F}{F}
\newcommand{\Vbr}%
   {V_{\operatorname{br}}}                 
\numberwithin{equation}{section}
\newlength{\stdbskip}
\newlength{\bskipabstract}
\newlength{\bskipmaintext}
\newlength{\deltatextwidth}
\newlength{\deltatextheight}
\begin{document}

\title{The braided Brin-Thompson groups}
\date{\today}
\subjclass[2010]{Primary 20F65;   
                 Secondary 20F36, 
                 57M07,           
                 }           

\keywords{Thompson's group, finiteness properties, braid group, surface, arc complex, matching complex}

\author[R. Spahn]{Robert Spahn}
\address{Department of Mathematics and Statistics, University at Albany (SUNY), Albany, NY 12222}
\email{rspahn@albany.edu}

\begin{abstract}
\setlength{\baselineskip}{\bskipabstract}
We construct braided versions $s\Vbr$ of the Brin-Thompson groups $sV$ and prove that they are of type $\F_\infty$. The proof involves showing that the matching complexes of colored arcs on surfaces are highly connected.
\end{abstract}

\maketitle
\thispagestyle{empty}


\setlength{\baselineskip}{\bskipmaintext}

\noindent
\section*{Introduction}
A group is of \emph{type} $F_\infty$ if it admits a classifying space whose $n$-skeleton is compact for every $n$. Some examples
of groups of type $\F_\infty$ include Thompson's group $V$, the Brin-Thompson group $sV$ for $s \in \N$, and the braid group on $n$ strands $B_n$.  A braided version of Thompson's group $V$, which we will denote $\Vbr$, was introduced independently by Brin and Dehornoy in \cite{brin07,dehornoy06}. This group contains copies of the braid group $B_n$ for each $n\in\N$, and was shown to be finitely presented by Brin in \cite{brin06} and to be of type $\F_\infty$ by Bux-Fluch-Marschler-Witzel-Zaremsky in \cite{Bux16}. The crucial Brin-Thompson groups $sV$ for $s\in \N$, also known as higher dimensional Thompson's groups, were defined by Brin in \cite{Brin04} and proven to be of type $\F_\infty$ by Fluch-Marshler-Witzel-Zaremsky in \cite{Fluch13}. This infinite family of groups also has a braided variant family, known as the braided Brin-Thompson groups which we denote by $s\Vbr$. This new infinite family of groups can be defined in a similar way as was done to get from $V$ to $\Vbr$ and will be the main family of groups we focus on in this paper. We will start by constructing the groups $s\Vbr$ and proceed to prove that they are of type $\F_\infty$. 

\begin{maintheorem}\label{thrm:maintheorem}
The braided Thompson's groups $s\Vbr$ is of type $\F_\infty$.
\end{maintheorem}

Therefore the $s\Vbr$ provide an infinite family of groups generalizing $\Vbr$, in particular this is an infinite family of ``braided'' Thompson-like groups that are all of type $\F_\infty$. Our approach follows the proof in \cite{Bux16} that $\Vbr$ is of type $\F_\infty$, though a number of new ideas are required. In particular, the descending links arising in the study of the local structure are modeled by \emph{multicolored matching complexes on a surface}, which may be of independent interest. These complexes are given by a graph together with a surface containing the vertices of the graph, and consist of multicolored arc systems that yield a matching of the graph. We show these complexes to be highly connected in certain cases.

\begin{theorem*}[Theorem \ref{thrm:multicolored_surface_matching_conn}]
The multicolored matching complex on a surface for the complete graph on $n$ vertices is $(\lfloor \frac{n-2}{3} \rfloor-1)$-connected. 
\end{theorem*}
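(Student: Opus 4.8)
\section*{Proof proposal for Theorem~\ref{thrm:multicolored_surface_matching_conn}}

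The plan is to argue by induction on $n$, proving the statement simultaneously for the multicolored matching complex $Y = Y_n(X)$ attached to $K_n$ and to an \emph{arbitrary} (possibly disconnected) surface $X$ whose distinguished points serve as the $n$ vertices of $K_n$, and with an arbitrary number of colors; this uniformity is what lets the inductive reductions stay inside the same class of complexes. Write $\nu(n) = \floor{(n-2)/3} - 1$. Since $\nu(n) \le -1$ for $n \le 4$, the base cases reduce to the (essentially automatic) nonemptiness of $Y$, and for $n \le 1$ there is nothing to prove.

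The inductive step is driven by two reduction moves. Deleting a vertex $p$ of $K_n$ and forgetting the corresponding marked point identifies the full subcomplex $Y_p \subseteq Y$ spanned by the colored arcs that miss $p$ with the complex $Y_{n-1}$ of the same surface now carrying $n-1$ marked points, so by induction $Y_p$ is $\nu(n-1)$-connected. Cutting the surface along the underlying arc of a vertex $v$ of $Y$ yields a surface with $n-2$ marked points and identifies $\lk_Y(v)$ with the complex $Y_{n-2}$ of that cut surface; iterating, $\lk_Y(\sigma)$ is the complex $Y_{n-2(\dim\sigma+1)}$ of the surface cut along all arcs of $\sigma$. Cutting along a separating arc disconnects the surface, but this is harmless: the multicolored matching complex over a disjoint union of surfaces is the join of the complexes over the pieces, and passing to joins never decreases connectivity, so the inductive bounds survive. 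The relevant arithmetic is that $\nu(n-1) \in \{\nu(n)-1, \nu(n)\}$, with $\nu(n-1) = \nu(n)$ exactly when $n \not\equiv 2 \pmod 3$, and that $\nu(n) - \nu(n-1-t) \le t - 1$ for every $t \ge 2$; both are immediate from the definition of the floor.

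These ingredients are assembled through the decomposition $Y = Y_p \cup Y^p$, where $Y^p = \bigcup_{v \ni p}\bigl(v \ast \lk_Y(v)\bigr)$ is the subcomplex of matchings that do cover $p$ --- and, since each such matching contains exactly one arc at $p$, this is genuinely a union of cones --- while $Y_p \cap Y^p = \bigcup_{v \ni p}\lk_Y(v)$. Feeding the reduction data into the standard connectivity machinery --- the connectivity form of the nerve lemma applied to the cone cover of $Y^p$, together with a Mayer--Vietoris and van Kampen analysis of the union --- one shows, after a sufficiently generous choice of a family of arcs at $p$ (chosen so that the relevant nerve is itself highly connected), that $Y^p$ is $\nu(n)$-connected and $Y_p \cap Y^p$ is $(\nu(n)-1)$-connected. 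Provided $Y_p$ is $\nu(n)$-connected as well, this forces $Y$ to be $\nu(n)$-connected, and by the arithmetic above this settles every residue class except $n \equiv 2 \pmod 3$.

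The real obstacle is exactly the case $n \equiv 2 \pmod 3$. There $\nu(n-1) = \nu(n) - 1$, so the inductive hypothesis yields only that $Y_p$ is $(\nu(n)-1)$-connected, and the Mayer--Vietoris sequence leaves $\widetilde H_{\nu(n)}(Y)$ equal to the cokernel of $\widetilde H_{\nu(n)}(Y_p \cap Y^p) \to \widetilde H_{\nu(n)}(Y_p)$ (using $\widetilde H_{\nu(n)}(Y^p) = 0$). It then suffices to show that the pair $(Y_p, Z)$, with $Z := \bigcup_{v \ni p}\lk_Y(v)$, is $\nu(n)$-connected --- equivalently, that every $\nu(n)$-cycle of $Y_p = Y_{n-1}$ can be pushed into the union $Z$ of the ``cut along an arc at $p$'' subcomplexes --- and this is where the surface and the colors must be used quantitatively rather than formally. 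The point is that a matching of dimension at most $\nu(n)$ covers at most $2\bigl(\nu(n)+1\bigr) \le n-2$ of the remaining $n-1$ vertices, so there is always a spare vertex $j$, and on a surface there is room --- in a fresh color if need be --- to route an arc from $p$ to $j$ missing the given matching; a careful one-simplex-at-a-time argument, pushing the cycle off the simplices that do not yet lie in $Z$, then deforms it into $Z$. This is the surface-theoretic counterpart of the delicate step in the Bj\"orner--Lov\'asz--Vre\'cica--\v{Z}ivaljevi\'c analysis that pins down the exact connectivity of the ordinary matching complex $M_n$, and adapting it to colored arcs on an arbitrary surface while keeping the induction uniform over all surfaces is where essentially all of the work lies.
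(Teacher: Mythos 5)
Your outline has a genuine gap at precisely the point you identify as carrying ``essentially all of the work.'' The decomposition $Y = Y_p \cup Y^p$, the link/cut identifications, and the floor arithmetic are all fine, but two essential steps are asserted rather than proved: (i) that $Y^p$ is $\nu(n)$-connected ``after a sufficiently generous choice of a family of arcs at $p$'' --- the nerve-lemma application requires connectivity bounds on intersections of several cones $v \ast \lk_Y(v)$, i.e.\ on intersections of links, and no argument is given for these; and (ii) the critical case $n \equiv 2 \pmod 3$, where one must push every $\nu(n)$-cycle of $Y_p$ into $Z = \bigcup_{v \ni p}\lk_Y(v)$. The ``spare vertex plus room on the surface'' heuristic does not by itself produce the required homotopy: the one-simplex-at-a-time retraction is exactly the delicate BLV\v{Z}/Hatcher--Wahl-type argument, and adapting it to colored arcs (where two arcs with the same endpoints but different colors are distinct vertices that cannot be simultaneously matched) is nontrivial and unverified. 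As written, the proposal is a plan for reproving the base result from scratch, not a proof.

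It also misses a much shorter route, which is the one the paper takes: the uncolored complex $\matcharc(K_n)$ on the disk is already known to be $(\floor{\frac{n-2}{3}}-1)$-connected (Bux--Fluch--Marschler--Witzel--Zaremsky, Theorem 3.8), so one only needs to transfer this to $s\matcharc(K_n)$. The color-forgetting map $\phi\colon s\matcharc(K_n) \to \matcharc(K_n)$ does this via Quillen's fiber theorem: the fiber over a $k$-simplex is the join of the fibers over its $k+1$ vertices, hence $(k-1)$-connected, and the link of a $k$-simplex in $\matcharc(K_n)$ is a copy of $\matcharc(K_{n-2k-2})$, whose connectivity is controlled by the same floor arithmetic you wrote down. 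This reduction avoids the entire inductive surface-cutting machinery and, in particular, the unproved pushing argument on which your proposal depends.
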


The Main Theorem can be viewed as part of a general attempt to
understand how the finiteness properties of a group change when it is braided. Another instance of this question concerns the \emph{braided Houghton groups} $BH_n$. In \cite{genevois_20} Genevois-Lonjou-Urech prove that, for any $n$, $BH_n$ is of type $F_{n-1}$ but not of type $F_n$. Another example involves braiding the Higman-Thompson groups $V_{n,r}$ and certain Nekrahevych groups of the form $V_{n,r}(H)$, and inspecting questions of finite generation, details of which can be found in \cite{aroca_20}.

In Section \ref{sec:braided_sV} we introduce the definition of $s\Vbr$, using the language similar to how $\Vbr$ was discussed in \cite{Bux16}. We also introduce ``multicolored spraiges'', or ``multicolored split-braid-merge diagrams.'' The Stein complex $sX_{br}$ is constructed in Section \ref{sec:def_stein_space} along with an invariant, cocompact filtration $(sX_{br}^{\le n})_{n\in\N}$. In Section \ref{sec:surfaces} multicolored matching complexes on surfaces are introduced and shown to be highly connected. These connectivity results are then used in Section \ref{sec:desc_link_conn} to show that the filtration~$(sX_{br}^{\le n})_{n\in\N}$ is highly connected. Finally we prove the Main Theorem in Section \ref{sec:proof_main_theorem}.


\subsection*{Acknowledgments}

The author is grateful to his Ph.D. advisor Matt Zaremsky for guiding him through the strategy to handle the complexes $s\matcharc(K_n)$ in Section \ref{sec:surfaces} and the filtration in Section \ref{sec:desc_link_conn}, especially the assistance with proofs of Lemmas \ref{lem:desc_morse_link_contra}, \ref{lem:bottleneck_merge}, \ref{lem:remove_simplex_connectivity}, and \ref{lem:boundary_inclusion_nullhomotopic}, as well as countless other circumstances during the writing of this paper. We also thank Matt Brin for providing the idea of braiding the previously researched Brin-Thompson groups, also known as Higher Dimensional Thompson groups $sV$ for $s \in \N$, and discussing the main theorem with the author.


\section{The braided Brin-Thompson groups}
\label{sec:braided_sV}

Thompson's groups and their variants have been studied at length, and possess many unusual and interesting properties. An introduction to the classical Thompson's groups can be found in \cite{cannon96}, in particular Thompson's group $V$ is an infinite simple group of type $\F_\infty$. A good introduction to the braided Thompson group $\Vbr$ is \cite{brady08} which is a group of type $\F_\infty$ as proven in \cite{Bux16}. Another example of a variant is the Brin-Thompson groups $sV$ introduced by Brin in \cite{Brin04} which are a family of infinite simple groups which are proven to be of type $\F_\infty$ in \cite{Fluch13}. The family of groups we will define in this section will generalize $V$ and $sV$, namely by adding colors and braids to the trees. We will picture elements of this new family of groups in the language of \emph{strand diagrams}, as in \cite{belk07}.

\subsection{The group}\label{sec:braided_sV_setup}

We first recall and introduce some important terminology. By a \emph{rooted binary tree} we mean a finite tree such that every vertex has degree $3$, except the \emph{leaves}, which have degree $1$, and the \emph{root}, which has degree $2$ (or degree $1$ if the root is also a leaf).  Usually we draw trees with the root at the top and the nodes descending from it, down to the leaves.  A non-leaf node together with the two nodes directly below it is called a \emph{caret}. By a \emph{rooted binary multicolored tree} we mean a rooted binary tree where the carets have a designated color. If the leaves of a caret in $T$ are leaves of $T$, we will call the caret \emph{elementary}. Note that a rooted binary multicolored tree always consists of $(n-1)$ carets and has $n$ leaves, for some $n \in \N$. These trees are important in defining the elements of our group, however, first we need to define some types of diagrams. At times it may be necessary to emphasize how many colors a diagram has. In such a case we will replace ``multicolored'' by ``$s$-colored'' where $s \geq 1$ is the number of colors needed. 

\begin{definition}
By a \emph{paired multicolored tree diagram} we mean a triple $(T_-,\rho,T_+)$ consisting of two rooted binary multicolored trees $T_-$ and $T_+$ with the same number of leaves $n$, and a permutation $\rho\in S_n$.  The leaves of $T_-$ are labeled $1,\dots,n$ from left to right, and for each $i$, the $\rho(i)^{\text{th}}$ leaf of $T_+$ is labeled $i$.  
\end{definition}

We can draw a paired multicolored tree diagram $(T_-, \rho, T_+)$ in two ways as follows. The first way would be as an ordered pair of the two trees labeled appropriately using the permutation. The second way would be to draw $T_-$ as described, $T_+$ upside down and below $T_-$, and arrows representing the permutation $\rho$. This way of drawing paired multicolored tree diagrams is closely related to the \emph{strand diagrams} model as done in \cite{belk07}. See Figure \ref{fig:reduction_sV} for an example of a drawing the first way and Figure \ref{fig:element_of_2V} for an example of drawing the second way. 

\begin{figure}[t]
	\centering
	\begin{tikzpicture}[line width=1pt, scale=0.7]
		\draw[blue]
		(-1.5,-1.5) -- (0,0) -- (1.5,-1.5)
		(1,-1) -- (0,-2);
		\draw[red]
		(-2,-2) -- (-1.5,-1.5)
		(1.5,-1.5) -- (2,-2)
		(-1.5,-1.5) -- (-1,-2)
		(1.5,-1.5) -- (1,-2);
		\filldraw[red]
		(1.5,-1.5) circle (1.5pt)
		(-2,-2) circle (1.5pt)
		(-1,-2) circle (1.5pt)
		(-1.5,-1.5) circle (1.5pt)
		(2,-2) circle (1.5pt)
		(1,-2) circle (1.5pt);
		\filldraw[blue]
		(0,0) circle (1.5pt)
		(0,-2) circle (1.5pt)
		(1,-1) circle (1.5pt);
		\node at (-2,-2.5) {$1$};
		\node at (-1,-2.5) {$2$};
		\node at (0,-2.5) {$3$};
		\node at (1,-2.5) {$4$};
		\node at (2,-2.5) {$5$};
		
		\begin{scope}[xshift=5.5cm, yscale=-1]
			\draw[red]
			(-1,1) -- (0,0) -- (2,2)
			(-.5,.5) -- (1,2)
			(-1,2) -- (-.5, 1.5)
			(-.5, 1.5) -- (0, 2);
			\draw[blue]
			(-2,2) -- (-1,1) -- (-.5,1.5);
			\filldraw[red]
			(0,0) circle (1.5pt)
			(0,2) circle (1.5pt)
			(2,2) circle (1.5pt)
			(-1,2) circle (1.5pt)
			(-.5, 1.5) circle (1.5pt)
			(-.5,.5) circle (1.5pt)
			(1,2) circle (1.5pt);
			\filldraw[blue]
			(-1,1) circle (1.5pt)
			(-2,2) circle (1.5pt);
			\node at (-2,2.5) {$3$};
			\node at (-1,2.5) {$1$};
			\node at (0,2.5) {$2$}; 
			\node at (1,2.5) {$5$};
			\node at (2,2.5) {$4$};
		\end{scope}
		
		\begin{scope}[xshift=-1cm, yshift=-4cm]
			\draw[blue]
			(0,-1.5) -- (1.5,0) -- (2.5,-1)
			(2,-.5)  -- (1,-1.5); 
			\draw[red]
			(2.5,-1) -- (3,-1.5)
			(2,-1.5) -- (2.5,-1);
			\filldraw[red]
			(2.5,-1) circle (1.5pt)
			(3,-1.5) circle (1.5pt)
			(2,-1.5) circle (1.5pt)
			(2.5,-1) circle (1.5pt)
			(2.5,-1) circle (1.5pt);
			\filldraw[blue]
			(1.5,0) circle (1.5pt)
			(0,-1.5) circle (1.5pt)
			(1,-1.5) circle (1.5pt)
			(2,-.5) circle (1.5pt);
			\node at (0,-2) {$1$};
			\node at (1,-2) {$2$};
			\node at (2,-2) {$3$};
			\node at (3,-2) {$4$};
		\end{scope}
		
		\begin{scope}[xshift=3.5cm, yshift=-5.5cm, yscale=-1]
			\draw[red]
			(.5,-.5) -- (1.5,-1.5) -- (3,0)
			(1,-1) -- (2,0);
			\draw[blue]
			(0,0) -- (.5,-.5)
			(.5,-.5) -- (1,0);
			\filldraw[red]
			(1.5,-1.5) circle (1.5pt)
			(3,0) circle (1.5pt)
			(1,-1) circle (1.5pt)
			(3,0) circle (1.5pt)
			(2,0) circle (1.5pt);
			\filldraw[blue]
			(0,0) circle (1.5pt)
			(.5,-.5) circle (1.5pt)
			(1,0) circle (1.5pt);
			\node at (0,0.5) {$2$}; 
			\node at (1,0.5) {$1$};
			\node at (2,0.5) {$4$}; 
			\node at (3,0.5) {$3$};
		\end{scope}
	\end{tikzpicture}
	\caption{Reduction of the top paired multicolored tree diagram to the bottom one.}
	\label{fig:reduction_sV}
\end{figure}
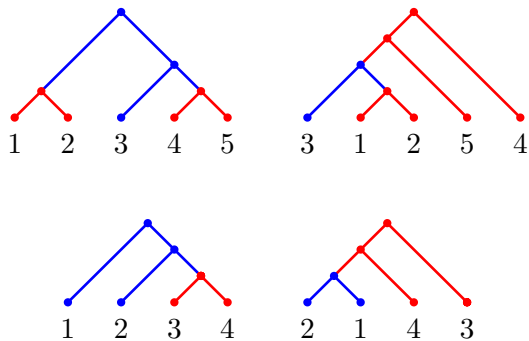

\begin{figure}[t]
	\centering
	\begin{tikzpicture}[line width=1pt,scale=0.9]
		\draw[blue]
		(-1.5,-1.5) -- (0,0) -- (1.5,-1.5)
		(1,-1) -- (0,-2);
		\draw[red]
		(-2,-2) -- (-1.5,-1.5)
		(1.5,-1.5) -- (2,-2)
		(-1.5,-1.5) -- (-1,-2)
		(1.5,-1.5) -- (1,-2);
		\filldraw[red]
		(1.5,-1.5) circle (1.5pt)
		(-2,-2) circle (1.5pt)
		(-1,-2) circle (1.5pt)
		(-1.5,-1.5) circle (1.5pt)
		(2,-2) circle (1.5pt)
		(1,-2) circle (1.5pt);
		\filldraw[blue]
		(0,0) circle (1.5pt)
		(0,-2) circle (1.5pt)
		(1,-1) circle (1.5pt);
		\draw[->](-1.9,-2.1) -> (-1.1,-2.9); \draw[->](-0.9,-2.1) ->
		(-0.1,-2.9); \draw[->](-0.1,-2.1) -> (-1.9,-2.9); \draw[->](1.1,-2.1) ->
		(1.9,-2.9); \draw[->](1.9,-2.1) -> (1.1,-2.9);
		
		\begin{scope}[yshift=-5cm]
			\draw[red]
			(-1,1) -- (0,0) -- (2,2)
			(-.5,.5) -- (1,2)
			(-1,2) -- (-.5, 1.5)
			(-.5, 1.5) -- (0, 2);
			\draw[blue]
			(-2,2) -- (-1,1) -- (-.5,1.5);
			\filldraw[red]
			(0,0) circle (1.5pt)
			(0,2) circle (1.5pt)
			(2,2) circle (1.5pt)
			(-1,2) circle (1.5pt)
			(-.5, 1.5) circle (1.5pt)
			(-.5,.5) circle (1.5pt)
			(1,2) circle (1.5pt);
			\filldraw[blue]
			(-1,1) circle (1.5pt)
			(-2,2) circle (1.5pt);
		\end{scope}
	\end{tikzpicture}
	\caption{An element of $2V$.}
	\label{fig:element_of_2V}
\end{figure}
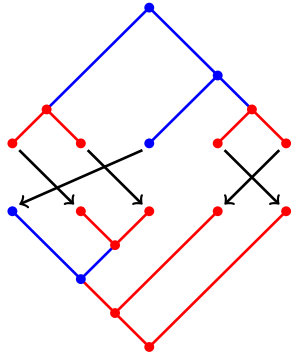

There exists an equivalence relation on paired multicolored tree diagrams given by reductions, expansions, and application of the cross relation. By a \emph{reduction} we mean the following: Suppose there is an elementary caret in $T_-$ with left leaf labeled $i$ and right leaf labeled $i+1$, and an elementary caret in $T_+$ of the same color with left leaf labeled $i$ and right leaf labeled $i+1$.  Then we can ``reduce'' the diagram by removing those carets, renumbering the leaves and replacing $\rho$ with the permutation $\rho'\in S_{n-1}$ that sends the new leaf of $T_-$ to the new leaf of $T_+$, and otherwise behaves like $\rho$. The resulting paired multicolored tree diagram $(T'_-,\rho',T'_+)$ is then said to be obtained by \emph{reducing} $(T_-,\rho, T_+)$. The reverse operation to reduction is called \emph{expansion}, so $(T_-,\rho,T_+)$ is an expansion of $(T'_-,\rho',T'_+)$. See Figure \ref{fig:reduction_sV} for an idea of reduction of paired multicolored tree diagrams. The additional relation that can be applied is called the \textit{cross relation} and is defined below. Due to this additional relation, ``reduced'' representatives of paired multicolored tree diagrams are not unique.

\begin{definition}[Cross Relation]
	Consider the paired $2$-colored tree diagram where the first tree $T_-$ consists of a red caret with a blue caret on each leaf, the second tree consists of a blue caret with a red caret on each leaf, and the permutation $\rho = (23)$. This paired rooted multicolored tree diagram is equivalent to the trivial one. This is defined to be the \textit{cross relation} and can be generalized to any number of dimensions. See Figure \ref{fig:2Vcrossrelation} for a visualization of this relation.
\end{definition}

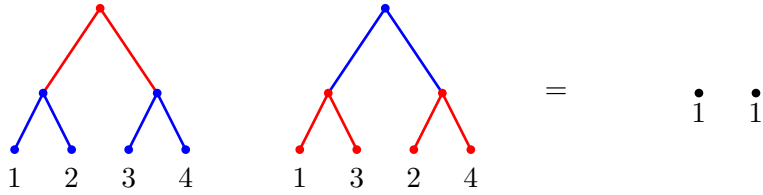
\begin{figure}[t]
	\centering
	\begin{tikzpicture}[line width=1pt, scale=.75]
		
		\begin{scope}[xshift=-3cm,yshift=-1cm]
			\coordinate (a) at (0,0);
			\coordinate (b) at ($(a) + (1,0)$);
			\coordinate (c) at ($(a) + (2,0)$);
			\coordinate (d) at ($(a) + (3,0)$);
			\coordinate (e) at ($(a) + (0.5,1)$);
			\coordinate (f) at ($(a) + (2.5,1)$);
			\coordinate (g) at ($(a) + (1.5,2.5)$);
			\coordinate (h) at ($(a) + (5,0)$);
			\coordinate (i) at ($(a) + (6,0)$);
			\coordinate (j) at ($(a) + (7,0)$);
			\coordinate (k) at ($(a) + (8,0)$);
			\coordinate (l) at ($(a) + (5.5,1)$);
			\coordinate (m) at ($(a) + (7.5,1)$);
			\coordinate (n) at ($(a) + (6.5,2.5)$);
			\coordinate (a') at ($(a) + (0,-1)$);
			\coordinate (b') at ($(b) + (0,-1)$);
			\coordinate (c') at ($(c) + (0,-1)$);
			\coordinate (d') at ($(d) + (0,-1)$);
			
			\draw[line width=1pt, blue]
			(a) -- (e) -- (b)
			(c) -- (f) -- (d)
			(l) -- (n) -- (m);
			\draw[line width=1pt, red]
			(e) -- (g) -- (f)
			(h) -- (l) -- (i)
			(j) -- (m) -- (k);
			
			\filldraw[blue]
			(a) circle (1.5pt)
			(b) circle (1.5pt)
			(c) circle (1.5pt)
			(d) circle (1.5pt)
			(e) circle (1.5pt)
			(f) circle (1.5pt)
			(n) circle (1.5pt); 
			\filldraw[red] 
			(g) circle (1.5pt)
			(h) circle (1.5pt)
			(i) circle (1.5pt)
			(j) circle (1.5pt)
			(k) circle (1.5pt)
			(l) circle (1.5pt)
			(m) circle (1.5pt);  
			 
			\node at (0, -0.5) {1};
			\node at (1, -0.5) {2};
			\node at (2, -0.5) {3};
			\node at (3, -0.5) {4};
			\node at (5, -0.5) {1};
			\node at (6, -0.5) {3};
			\node at (7, -0.5) {2};
			\node at (8, -0.5) {4};
		\end{scope}
		\node at (6.5,0) {$=$};
		
		\begin{scope}[xshift=9cm, yshift=-1cm]
			\coordinate (a) at (0,1);
			\coordinate (b) at (1,1);
			
			
			\filldraw
			(a) circle (1.5pt)
			(b) circle (1.5pt);
			
			\node at (0, 0.65) {1};
			\node at (1, 0.65) {1};
		\end{scope}
	\end{tikzpicture}
	\caption{The cross relation in $2V$}
	\label{fig:2Vcrossrelation}
\end{figure}

There is a binary operation $\ast$ on the set of equivalence classes
of paired multicolored tree diagrams. Let $T=(T_-,\rho,T_+)$ and $S=(S_-,\xi,S_+)$ be paired multicolored tree diagrams.  By applying repeated expansions and possibly using the cross relation we can find representatives $(T'_-,\rho',T'_+)$ and $(S'_-,\xi',S'_+)$ of the equivalence classes of $T$ and $S$, respectively, such that $T'_+ = S'_-$.  Then we declare $T\ast S$ to be $(T'_-,\rho'\xi',S'_+)$. This operation is well defined on the equivalence classes, and is a group operation. 

\begin{definition}[Brin-Thompson group $sV$]
	\label{def:sV}
	The Brin-Thompson group $sV$ ($s \in \N$) is the group of equivalence classes of paired $s$-colored tree diagrams with the multiplication $\ast$.
\end{definition}

Observe that while the above definition is in terms of $s$-colored tree diagrams with the group operation $\ast$, this is different than how the group was first defined in \cite{Brin04}. The $s$-colored trees represent a subdivision of a $s$-cube, and so the $s$-colored tree diagrams represent a map from one subdivided $s$-cube to another. This way of viewing $sV$ has a group operation of composition as its elements are maps. More details on this model for $sV$ can be found in \cite{Brin04} and \cite{Fluch13}. Moreover, more details on the model for which we need in this paper can be found in \cite{Burillo08}.

Now we construct the braided version $s\Vbr$ of $sV$. The idea is to replace the permutations of leaves by braids between the leaves. We will first introduce \emph{braided paired multicolored tree diagrams} and then copy the construction of $sV$ given above to define $s\Vbr$.  

\begin{definition}
	A \emph{braided paired multicolored tree diagram} is a triple $(T_-,b,T_+)$ consisting of two rooted binary multicolored trees $T_-$ and $T_+$ with the same number of leaves $n$ and a braid $b \in B_n$. 
\end{definition}

As with $sV$, we can define an equivalence relation on the set of
braided paired multicolored tree diagrams using the notions of reduction and expansion plus a braided cross relation. We will first define expansion and then take reduction as the reverse of expansion. The notions of expansions and reductions are all taken from the $s=1$ case in \cite{Bux16} and we add the braided cross relation which is new to this group. Let $\rho_b\in S_n$ denote the permutation corresponding to the braid $b\in B_n$. Let $(T_-,b,T_+)$ be a braided paired multicolored tree diagram.  Label the leaves of $T_-$ from $1$ to $n$, left to right, and for each $i$ label the $\rho_b(i)^{\text{th}}$ leaf of $T_+$ by $i$.  By the $i^{\text{th}}$ strand of the braid we will always mean the strand that begins at the $i^{\text{th}}$ leaf of $T_-$, i.e., we count the strands from the top.  An \emph{expansion} of $(T_-,b,T_+)$ amounts to the following. For some $1\le i\le n$, replace~$T_\pm$ with trees $T_\pm'$ obtained from $T_\pm$ by adding a caret to the leaf labeled~$i$.  Then replace $b$ with a braid $b' \in B_{n+1}$, obtained by ``doubling'' the $i^{\text{th}}$ strand of $b$.  The triple $(T_-',b',T_+')$ is an \emph{expansion} of $(T_-,b,T_+)$. As with paired multicolored tree diagrams, \emph{reduction} is the reverse of expansion, so $(T_-,b,T_+)$ is a reduction of $(T_-',b',T_+')$.  See Figure \ref{fig:reduction_sVbr} for an idea of reduction of braided paired multicolored tree diagrams. The additional relation that can be applied is called the \textit{braided cross relation} and is defined below. Due to this additional relation, ``reduced'' representatives of paired multicolored tree diagrams are not unique. The equivalence class of $(T_-,b,T_+)$ will be denoted by $[(T_-,b,T_+)]$, or for simplicity sometimes by $[T_-,b,T_+]$.

\begin{figure}
	\centering
	\begin{tikzpicture}[line width=1pt,scale=0.75]
		\draw[blue]
		(.5,-1.5) -- (2,0) -- (3.5,-1.5)
		(3,-1) -- (2,-2);
		\draw[red]
		(0,-2) -- (.5,-1.5)
		(3.5,-1.5) -- (4,-2)
		(0.5,-1.5) -- (1,-2)
		(3.5,-1.5) -- (3,-2);
		\draw
		(2,-2) to [out=-90, in=90] (0,-4)   
		(3,-2) to [out=-90, in=90] (4,-4);
		\draw[white, line width=4pt]
		(0,-2) to [out=-90, in=90] (1,-4)   
		(1,-2) to [out=-90, in=90] (2,-4)  
		(4,-2) to [out=-90, in=90] (3,-4);
		\draw
		(0,-2) to [out=-90, in=90] (1,-4)   
		(1,-2) to [out=-90, in=90] (2,-4)
		(4,-2) to [out=-90, in=90] (3,-4);
		\draw[red]
		(1,-5) -- (2,-6) -- (4,-4)
		(1.5,-5.5) -- (3,-4)
		(1,-4) -- (1.5,-4.5)
		(1.5, -4.5) -- (2, -4);
		\draw[blue]
		(0,-4) -- (1,-5) -- (1.5,-4.5);
		
		\filldraw[red]
		(0,-2) circle (1.5pt)   
		(4,-2) circle (1.5pt)
		(0.5,-1.5) circle (1.5pt)
		(1,-2) circle (1.5pt)
		(3.5,-1.5) circle (1.5pt) 
		(3,-2) circle (1.5pt);
		\filldraw[blue]
		(3,-1) circle (1.5pt)  
		(2,-2) circle (1.5pt)   
		(2,0) circle (1.5pt);
		
		\filldraw[blue]
		(1,-5) circle (1.5pt)   
		(0,-4) circle (1.5pt);
		\filldraw[red]
		(1.5,-5.5) circle (1.5pt)
		(4,-4) circle (1.5pt)  
		(2,-6) circle (1.5pt)   		
		(3,-4) circle (1.5pt)   
		(1,-4) circle (1.5pt)  
		(2,-4) circle (1.5pt)
		(1.5,-4.5) circle (1.5pt);
		
		\draw[->]
		(5,-3) -> (6,-3);
		
		\begin{scope}[xshift=7cm,yshift=-1cm]
			\draw[blue]
			(0,-1.5) -- (1.5,0) -- (2.5,-1)
			(2,-.5)  -- (1,-1.5); 
			\draw[red]
			(2.5,-1) -- (3,-1.5)
			(2,-1.5) -- (2.5,-1);
			\draw
			(1,-1.5) to [out=-90, in=90] (0,-3)   
			(2,-1.5) to [out=-90, in=90] (3,-3);
			\draw[white, line width=4pt]
			(0,-1.5) to [out=-90, in=90] (1,-3)   
			(3,-1.5) to [out=-90, in=90] (2,-3);
			\draw
			(0,-1.5) to [out=-90, in=90] (1,-3)   
			(3,-1.5) to [out=-90, in=90] (2,-3);
			
			\draw[red]
			(.5,-3.5) -- (1.5,-4.5) -- (3,-3)
			(1,-4) -- (2,-3);
			\draw[blue]
			(0,-3) -- (.5,-3.5)
			(.5,-3.5) -- (1,-3);
			
			\filldraw[red]
			(2,-1.5) circle (1.5pt) 
			(3,-1.5) circle (1.5pt)  
			(2.5,-1) circle (1.5pt);
			\filldraw[blue]
			(0,-1.5) circle (1.5pt)   
			(1.5,0) circle (1.5pt)  
			(1,-1.5) circle (1.5pt)     
			(2,-.5) circle (1.5pt);
			
			\filldraw[red]
			(1.5,-4.5) circle (1.5pt)   
			(3,-3) circle (1.5pt)
			(1,-4) circle (1.5pt)   
			(2,-3) circle (1.5pt);
			\filldraw[blue]
			(.5, -3.5) circle (1.5pt) 
			(0,-3) circle (1.5pt)   
			(1,-3) circle (1.5pt);
		\end{scope}
	\end{tikzpicture}
	\caption{Reduction of braided paired multicolored tree diagrams.}
	\label{fig:reduction_sVbr}
\end{figure}
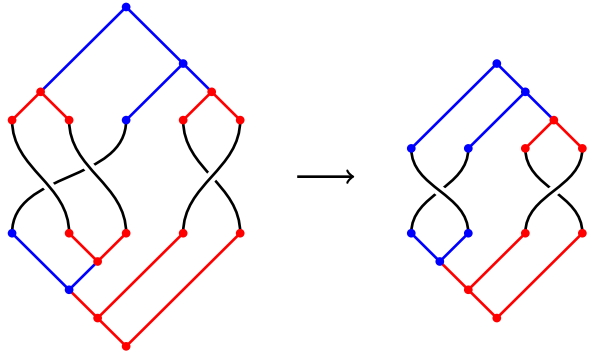

\begin{definition}[Braided cross relation]
	Consider the cross relation in $2V$. Replace the permutation with a braid as done in Figure \ref{fig:2Vbrcrossrelation} . This is known as the \textit{braided cross relation} and can be generalized to $s$ dimensions. Fix an arbitrary total ordering of the colors. The convention in the braid of left-over-right is for when red is less than blue. If red is greater than blue, then we braid right-over-left.
\end{definition}

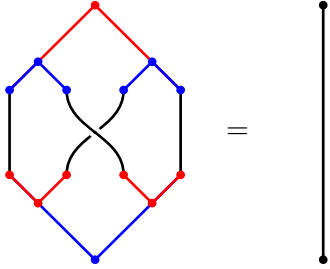
\begin{figure}[t]
	\centering
	\begin{tikzpicture}[line width=1pt, scale=.75]
		
		\begin{scope}[xshift=7cm,yshift=-1cm]
			\draw[line width=1pt, red]
			(0,-1.5) -- (1.5,0);
			\draw[line width=1pt, red]
			(1.5,0) -- (3,-1.5);
			\draw[line width=1pt, blue]
			(2.5,-1) -- (3,-1.5);
			\draw[line width=1pt, blue]
			(0.5,-1) -- (1,-1.5); 
			\draw[line width=1pt, blue]
			(0.5,-1) -- (0,-1.5); 
			\draw[line width=1pt, blue]
			(2,-1.5) -- (2.5,-1);
			\draw
			(1,-1.5) to [out=-90, in=90] (2,-3);
			\draw 
			(2,-1.5) to [out=-90, in=90] (1,-3);
			\draw[white, line width=1.5pt]
			(1.1,-1.5) to [out=-90, in=90] (2.1,-3);
			\draw[white, line width=1.5pt]
			(.9,-1.5) to [out=-90, in=90] (1.9,-3);
			\draw
			(0,-1.5) to [out=-90, in=90] (0,-3);
			\draw
			(3,-1.5) to [out=-90, in=90] (3,-3);
			\draw[blue]
			(0,-3) -- (1.5,-4.5);  
			\draw[blue]
			(1.5,-4.5) -- (3,-3);  
			\draw[red]
			(2.5,-3.5) -- (2,-3);
			\draw[red]
			(2.5,-3.5) -- (3,-3);
			\draw[red]
			(.5,-3.5) -- (0,-3);
			\draw[red]
			(.5,-3.5) -- (1,-3);
			
			\filldraw[blue]
			(0,-1.5) circle (1.5pt); 
			\filldraw[red] 
			(1.5,0) circle (1.5pt);  
			\filldraw[blue]
			(3,-1.5) circle (1.5pt) ;
			\filldraw[blue]
			(0.5,-1) circle (1.5pt) (1,-1.5) circle (1.5pt);
			\filldraw[blue]
			(2,-1.5) circle (1.5pt)  
			(2.5,-1) circle (1.5pt);
			\filldraw[red]
			(0,-3) circle (1.5pt);
			\filldraw[blue]
			(1.5,-4.5) circle (1.5pt);
			\filldraw[red]
			(0.5,-3.5) circle (1.5pt);
			\filldraw[red]
			(2.5,-3.5) circle (1.5pt);
			\filldraw[red]
			(3,-3) circle (1.5pt);
			\filldraw[red]
			(2,-3) circle (1.5pt);
			\filldraw[red]
			(1,-3) circle (1.5pt);
		\end{scope}
		\node at (11,-3.25) {$=$};
		
		\begin{scope}
			\draw
			(12.5,-1) -- (12.5, -5.5);
			
			\filldraw
			(12.5,-1) circle (1.5pt);
			\filldraw
			(12.5,-5.5) circle (1.5pt);
		\end{scope}
	\end{tikzpicture}

	\caption{Example of a braided equivalence relation in $2\Vbr$}
	\label{fig:2Vbrcrossrelation}
\end{figure}

Two braided paired multicolored tree diagrams are equivalent if one is obtained from the other by a finite sequence of reductions, expansions, or applications of braided cross relations. We will now discuss a binary operation $\ast$ on the set of equivalence classes of braided paired multicolored tree diagrams. Let $T = (T_-, b, T_+)$ and $S=(S_-, c, S_+)$ be reduced braided paired multicolored tree diagrams. We can find representatives $(T_{-}', b', T_{+}')$ and $(S_{-}', c', S_{+}')$ of the equivalence classes of $T$ and $S$, respectively, such that $T_{+}' = S_{-}'$. This may require the use of the braided cross relation defined above which was not the case for $\Vbr$. Then we declare $T \ast S$ to be $(T_{-}', b'c', S_{+}')$. We will proceed to show that this operation is well defined on the equivalence classes, and is a group operation. First we will show that it is well defined for reductions and expansions, following with it being well defined for braided cross relations.

Take $[T_-,b,T_+]=[T_-',b',T_+']$ and $[U_-,c,U_+]$ such that without loss of generality $T_+ = U_-$. Moreover, the first equality in the previous sentence holds for the entire equivalence class of $(T_-, b, T_+)$, so we may take $T_-'$ to be $T_-$ with one new caret added to the $k^{th}$ leaf, $b'$ is $b$ where we double the $k^{th}$ strand, and $T_+'$ has a new caret of the same caret on the $l^{th}$ leaf where the bottom of strand which was doubled in $b$ is the $l^{th}$ strand. Observe that $b'$ and $T_+'$ are determined by $T_-'$. We need to show that $[T_-,b, T_+][U_-,c,U_+]=[T_-',b', T_+'][U_-,c,U_+]$. By definition we know that $[T_-,b, T_+][U_-,c,U_+] = [T_-,bc, U_+]$ and we can find $[T_-',b', T_+'][U_-,c, U_+]$ by expanding $U_-$ to $U_-'$ by adding a single caret of the same color as was added to $T_-'$ to the $l^{th}$ leaf to make $T_+' = U_-'$ such that $c$ and $U_+$ turn into $c'$ and $U_+'$ appropriately. That is, we double the $l^{th}$ strand of $c$ to get $c'$. This will result in $[T_-',b',T_+'][U_-,c, U_+]= [T_-',b',T_+'][U_-',c', U_+']=[T_-',b'c', U_+']$. When we expand $(T_-,bc,U_+)$ in the same way as described above, we get $(T_-',(bc)',U_+')$ where $(bc)'$ is the result of doubling the $k^{th}$ strand of $bc$. Moreover, the bottom of the $k^{th}$ strand of $b$ is in the $l^{th}$ position of $c$, so when we double the $k^{th}$ strand of $bc$ we equivalently doubled the $k^{th}$ strand of $b$ and the $l^{th}$ strand of $c$. This is exactly $b'c'$. Therefore, $(bc)'$ is equal to $b'c'$. Hence we can expand $(T_-,bc, U_+)$ to get $(T_-', b'c', U_+')$, that is, $[T_-',b'c', U_+']=[T_-,bc, U_+]$. An analogous argument handles changing the representative of the right factor. 

Similarly, again take $[T_-,b,T_+]=[T_-',b',T_+']$ where $(T_-', b', T_+')$ is $(T_-, b, T_+)$ after one application of the expansion of a $k^{th}$ leaf of $T_-$, $k^{th}$ strand of $b$, and the $l^{th}$ leaf of $T_+$ by the braided cross relation as indicated by Figure \ref{fig:2Vbrcrossrelation}. As with the previous paragraph, also take $[U_-,c,U_+]$ where without loss of generality $T_+ = U_-$. Note the bottom of the $k^{th}$ strand of $b$ is the $l^{th}$ leaf of $T_+$. As before, $T_+ = U_-$ implies $[T_-,b,T_+][U_-,c,U_+] = [T_-,bc,U_+]$ and we can find $[T_-',b',T_+'][U_-,c,U_+]$ using the following argument. As $T_+'$ was obtained by adding the appropriate combination of carets to the $l^{th}$ leaf of $T_+$, namely a blue caret with a red caret on each foot, we  apply the braided cross relation upside down to the $l^{th}$ leaf of $U_-$ of $(U_-,c,U_+)$ to get $(U_-',c',U_+')$. It follows that $T_+' = U_-'$, hence $[T_-',b',T_+'][U_-',c',U_+'] = [T_-',b'c',U_+']$. Observe that the additional braid in $c'$ is the inverse of the additional braid in $b'$, so these recently added braids cancel each other out and can be replaced with the trivial braid on four strands. From there we can reduce $T_-'$ and $U_+'$ by removing the recently added carets. In other words $(T_-',b'c',U_+')$ reduces to $(T_-,bc,U_+)$. Therefore, $[T_-',b'c',U_+'] = [T_-,bc,U_+]$ as desired.
	
The operation $\ast$ on the set of equivalence classes of braided paired multicolored tree diagrams is therefore a well defined group operation.

\begin{definition}[Braided Brin-Thompson group $s\Vbr$]
	The braided Brin-Thompson group $s\Vbr$ ($s \in \N$) is the group of equivalence classes of braided paired $s$-colored tree diagrams with multiplication $\ast$.
\end{definition}

From now on we will just refer to the braided paired multicolored tree diagrams as being the elements of $s\Vbr$, though one should keep in mind that the elements are actually equivalence classes under the reduction, expansion, and braided cross relation operations.

It is convenient to have the following picture in mind for elements of $s\Vbr$. We can think of $T_-$ as described and $T_{+}$  is drawn beneath it upside down so that the root is at the bottom and the leaves are at the top. We then connect the leaves of $T_-$ to the leaves of $T_+$ as indicated by the braid $b$. This turns the picture into a kind of strand diagram as in \cite{belk07}. Notice the similarity of drawing elements of $s\Vbr$ to drawing elements of $sV$ as demonstrated in Figure \ref{fig:element_of_2V}. In other words we are taking an element of $sV$ and replacing the permutation with a braid. See the previously mentioned Figure \ref{fig:reduction_sVbr} for an example of an element in $2\Vbr$.

With this way of picturing elements of $s\Vbr$, we have a convenient way to visualize multiplication in $s\Vbr$ via ``stacking'' braided paired multicolored tree diagrams. For $g,h\in s\Vbr$, each pictured as described before, $g\ast h$ is obtained by attaching the top of $h$ to the bottom of $g$ and then reducing the picture via certain moves. We indicate four of these moves in Figure \ref{fig:reduction_moves}. A ``merge'' which is a caret opening upwards followed immediately by a ``split'' which is a caret opening downwards of the same color, or a split followed immediately by a merge of the same color, is equivalent to doing nothing, as seen in the top two pictures. Also, splits and merges interact with braids in ways indicated by the bottom two pictures. Additionally, we may need to apply the braided cross relation in order to obtain $g \ast h$. We leave it to the reader to further inspect the details of this visualization of multiplication in $s\Vbr$.

\begin{figure}[t]
	\centering
	\begin{tikzpicture}[line width=1pt]
		
		\begin{scope}[xshift=-2.75cm, yshift=-0.25cm]	
			\coordinate (a) at (0,0);
			\coordinate (b) at ($(a)+(0,0.75)$);
			\coordinate (c) at ($(a)+(1.5,0.75)$);
			\coordinate (d) at ($(a)+(0,-0.75)$);
			\coordinate (e) at ($(a)+(1.5,-0.75)$);
			\coordinate (f) at ($(a)+(0.75,0)$);
			
			\draw[red]
			(b) -- (e)
			(c) -- (d);
			
			\filldraw[red]
			(b) circle (1pt)
			(c) circle (1pt)
			(d) circle (1pt)
			(e) circle (1pt)
			(f) circle (1pt);
			
			\node at (2,0) {$=$};
		\end{scope}
		\begin{scope}[xshift=0.25cm, yshift=-0.25cm]
			\coordinate (a) at (0,0);
			\coordinate (b) at ($(a)+(0,0.75)$);
			\coordinate (c) at ($(a)+(1.5,0.75)$);
			\coordinate (d) at ($(a)+(0,-0.75)$);
			\coordinate (e) at ($(a)+(1.5,-0.75)$);
			\coordinate (f) at ($(a)+(0.75,0)$);
			
			\draw
			(b) -- (d)
			(c) -- (e);
			
			\filldraw
			(b) circle (1pt)
			(c) circle (1pt)
			(d) circle (1pt)
			(e) circle (1pt);
		\end{scope}
		\begin{scope}[xshift=4.75cm]
			\coordinate (a) at (0,0);
			\coordinate (b) at ($(a)+(0.75,1.5)$);
			\coordinate (c) at ($(a)+(0.75,0.75)$);
			\coordinate (d) at ($(a)+(0.75,-0.75)$);
			\coordinate (e) at ($(a)+(1.5,0)$);
			\coordinate (f) at ($(a)+(0.75,-1.5)$);
			\coordinate (g) at ($(a)+(2.75,-1.5)$);
			\coordinate (h) at ($(a)+(2.75,1.5)$);
			\draw
			(b) -- (c)
			(d) -- (f);
			\draw[red]
			(a) -- (c) -- (e) -- (d) -- (a);  
			\node at (2.1,0) {$=$};
			\draw
			(g) -- (h);
			
			\filldraw[red]
			(a) circle (1pt)
			(c) circle (1pt)
			(d) circle (1pt)
			(e) circle (1pt);
			\filldraw
			(b) circle (1pt)
			(f) circle (1pt)
			(g) circle (1pt)
			(h) circle (1pt);
		\end{scope}
		\begin{scope}[yshift=-4.5cm,xshift=-3.5cm, scale=0.75pt]
			\coordinate (a) at (0,0);
			\coordinate (b) at ($(a)+(0.5,0.5)$);
			\coordinate (c) at ($(a)+(1,0)$);
			\coordinate (d) at ($(a)+(0.5,3)$);
			\coordinate (e) at ($(a)+(3,3)$);
			\coordinate (f) at ($(a)+(3,0)$);
			\coordinate (g) at ($(a)+(0.25,0.25)$);
			
			\draw[red]
			(a) -- (b) -- (c);
			\draw
			(d) to [out=-90, in=90, looseness=1] (f);
			\draw[white, line width=4pt]
			(b) to [out=90, in=-90, looseness=1] (e);
			\draw
			(b) to [out=90, in=-90, looseness=1] (e);
			\filldraw[red]
			(a) circle (1pt)
			(b) circle (1pt)
			(c) circle (1pt);
			\filldraw
			(d) circle (1pt)
			(e) circle (1pt)
			(f) circle (1pt);
			\node at (3.75,1.5) {$=$};
		\end{scope}
		
		\begin{scope}[yshift=-4.5cm, xshift=-.05cm, scale=0.75pt]
			\coordinate (a) at (0,0);
			\coordinate (b) at ($(a)+(0.5,0.5)$);
			\coordinate (c) at ($(a)+(1,0)$);
			\coordinate (d) at ($(a)+(0.5,3)$);
			\coordinate (e) at ($(a)+(3,3)$);
			\coordinate (f) at ($(a)+(3,0)$);
			\coordinate (g) at ($(a)+(2.5,2.5)$);
			\coordinate (h) at ($(a)+(3.5,2.5)$);
			\draw[red]
			(g) -- (e) -- (h);
			\draw
			(d) to [out=-90, in=90, looseness=1] (f);
			\draw[white, line width=4pt]
			(g) to [out=-90, in=90, looseness=1] (a)
			(h) to [out=-90, in=90, looseness=1] (c);
			\draw
			(g) to [out=-90, in=90, looseness=1] (a)
			(h) to [out=-90, in=90, looseness=1] (c);
			\filldraw
			(a) circle (1pt)
			(c) circle (1pt)
			(d) circle (1pt)
			(f) circle (1pt);
			\filldraw[red]
			(e) circle (1pt)
			(g) circle (1pt)
			(h) circle (1pt);
		\end{scope}
		\begin{scope}[yshift=-2.25cm,xshift=4cm, yscale=-1, scale=0.75pt]
			\coordinate (a) at (0,0);
			\coordinate (b) at ($(a)+(0.5,0.5)$);
			\coordinate (c) at ($(a)+(1,0)$);
			\coordinate (d) at ($(a)+(0.5,3)$);
			\coordinate (e) at ($(a)+(3,3)$);
			\coordinate (f) at ($(a)+(3,0)$);
			\coordinate (g) at ($(a)+(0.25,0.25)$);
			
			\draw[red]
			(a) -- (b) -- (c);
			\draw
			(d) to [out=-90, in=90, looseness=1] (f);
			\draw[white, line width=4pt]
			(b) to [out=90, in=-90, looseness=1] (e);
			\draw
			(b) to [out=90, in=-90, looseness=1] (e);
			\filldraw[red]
			(a) circle (1pt)
			(b) circle (1pt)
			(c) circle (1pt);
			\filldraw
			(d) circle (1pt)
			(e) circle (1pt)
			(f) circle (1pt);
			\node at (3.75,1.5) {$=$};
		\end{scope}
		
		\begin{scope}[yshift=-2.25cm, xshift=7.5cm, yscale=-1, scale=0.75pt]
			\coordinate (a) at (0,0);
			\coordinate (b) at ($(a)+(0.5,0.5)$);
			\coordinate (c) at ($(a)+(1,0)$);
			\coordinate (d) at ($(a)+(0.5,3)$);
			\coordinate (e) at ($(a)+(3,3)$);
			\coordinate (f) at ($(a)+(3,0)$);
			\coordinate (g) at ($(a)+(2.5,2.5)$);
			\coordinate (h) at ($(a)+(3.5,2.5)$);
			\draw[red]
			(g) -- (e) -- (h);
			\draw
			(d) to [out=-90, in=90, looseness=1] (f);
			\draw[white, line width=4pt]
			(g) to [out=-90, in=90, looseness=1] (a)
			(h) to [out=-90, in=90, looseness=1] (c);
			\draw
			(g) to [out=-90, in=90, looseness=1] (a)
			(h) to [out=-90, in=90, looseness=1] (c);
			\filldraw
			(a) circle (1pt)
			(c) circle (1pt)
			(d) circle (1pt)
			(f) circle (1pt);
			\filldraw[red]
			(e) circle (1pt)
			(g) circle (1pt)
			(h) circle (1pt);
		\end{scope}
	\end{tikzpicture}
	\caption{Moves to reduce braided paired multicolored tree diagrams after stacking. Note only carets have colors, not braids, and the color red can be interchanged with other colors as long as it stays consistent for both sides of the equality.}
	\label{fig:reduction_moves}
\end{figure}
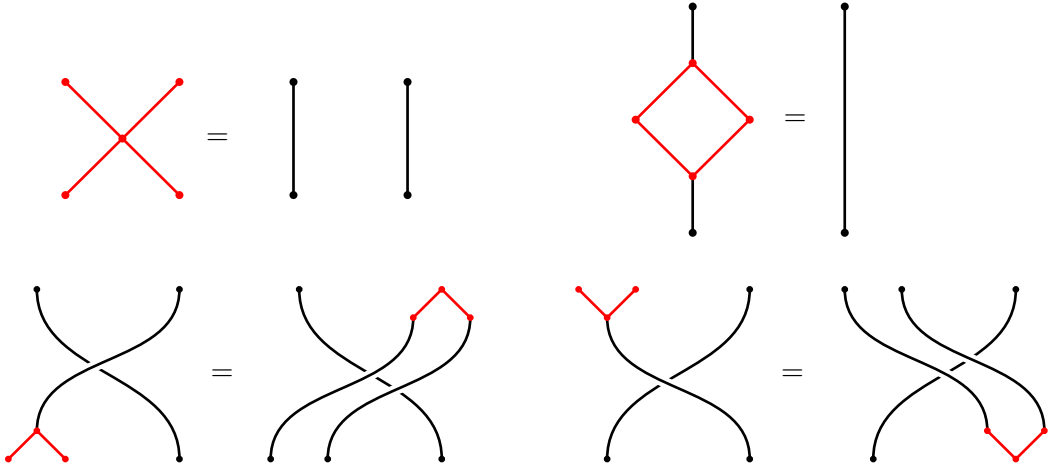

Our current goal is to inspect the higher finiteness properties of the $s\Vbr$, though first we will need some more language.  We now introduce a class of diagrams that will be used throughout the rest of this paper.

\subsection{A general class of diagrams}
\label{sec:general_diagrams}

As in \cite{Bux16}, to define the spaces we will use, we need a broader class of diagrams that generalizes braided paired multicolored tree diagrams, namely we will consider multicolored forests instead of multicolored trees. We will also continue to use the notion of strand diagrams. Here a multicolored forest will always mean a finite linearly ordered union of binary rooted multicolored trees. Given a braided paired multicolored tree diagram $(T_-,b,T_+)$ we call a caret in $T_-$ a \emph{split}. Similarly a \emph{merge} is a caret in $T_+$. With this terminology, we can call the picture representing the braided paired tree diagram a \emph{multicolored split-braid-merge diagram}, abbreviated \emph{multicolored spraige}. That is, we first picture one strand splitting up into $n$ strands in a certain way, representing $T_-$. Then the $n$ strands braid with each other, representing $b$. Finally according to $T_+$ we merge the strands back together. These special kinds of diagrams will also be called \emph{multicolored $(1,1)$-spraiges}.  More generally:

\begin{definition}[Multicolored Spraiges]
	\label{def:multi_spraiges}
	A \textit{multicolored} $(n,m)$-\textit{spraige} is a multicolored spraige that begins on $n$ strands, the \emph{(multicolored) heads}, and ends on $m$ strands, the \emph{(multicolored) feet}, and is braided in the middle. We can equivalently think of a \textit{multicolored} $(n,m)$-\textit{spraige} as a \textit{braided paired multicolored forest diagram} $(F_-, b, F_+)$, where $F_-$ has $n$ roots, $F_+$ has $m$ roots and both have the $k$ leaves with $b \in B_k$. By a \textit{multicolored} $n$-\textit{spraige} we mean a multicolored $(n,m)$-spraige for some $m$, and by a \textit{multicolored spraige} we mean a multicolored $(n,m)$-spraige for some $n$ and $m$. Let $s\spraige$ denote the set of all $s$-colored spraiges, $s\spraige_{n,m}$ denote the set of all $s$-colored $(n,m)$-spraiges, and $s\spraige_{n}$ denote the set of $s$-colored $n$-spraiges. 
\end{definition}

\begin{observation}[Left Cancellation]
	Note that if $\sigma\in\spraige_{n,m}$ and $\tau_1, \tau_2
	\in \spraige_{m,\ell}$ with $[\sigma \ast \tau_1] =
	[\sigma \ast \tau_2]$, then $[\tau_1]=[\tau_2]$. 
\end{observation}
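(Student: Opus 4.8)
The plan is to prove that every multicolored spraige is invertible under $\ast$; granting a two-sided $\ast$-inverse $\bar\sigma$ of $\sigma$, left cancellation then follows at once by left-multiplying the hypothesis by $[\bar\sigma]$. I would write $\sigma$ as a braided paired multicolored forest diagram $(F_-,b,F_+)$, where $F_-$ has $n$ roots, $F_+$ has $m$ roots, both forests have $k$ leaves, and $b\in B_k$, and define the \emph{reverse spraige} $\bar\sigma\defeq(F_+,b^{-1},F_-)\in s\spraige_{m,n}$. Pictorially $\bar\sigma$ is just the strand diagram of $\sigma$ flipped upside down: reflecting a crossing across a horizontal axis reverses its sign, which is why $b$ is replaced by $b^{-1}$, while caret colors are unaffected.

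The crux is to verify that $[\bar\sigma\ast\sigma]=[\id_m]$, the class of the trivial $(m,m)$-spraige $\id_m=(\bullet_m,\mathbf 1,\bullet_m)$, where $\bullet_m$ denotes the trivial forest on $m$ roots. This is quick from the definition of $\ast$: the foot-forest of $\bar\sigma$ and the head-forest of $\sigma$ are literally the same forest $F_-$, so no expansion or braided cross relation is needed to form the product, and one reads off $\bar\sigma\ast\sigma=(F_+,\,b^{-1}b,\,F_+)$, a spraige carrying the trivial braid. I would then reduce $(F_+,\mathbf 1,F_+)$ to $\id_m$ one caret at a time: as long as $F_+$ has a caret it has an elementary one, at some pair of leaves $i,i+1$; the foot copy of $F_+$ has the same elementary caret, of the same color, and the trivial braid matches leaf $i$ to leaf $i$, so the reduction move of Section~\ref{sec:braided_sV_setup} strips that caret off both copies and keeps the braid trivial. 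An induction on the number of carets collapses $(F_+,\mathbf 1,F_+)$ to $(\bullet_m,\mathbf 1,\bullet_m)=\id_m$. (The mirror computation yields $[\sigma\ast\bar\sigma]=[\id_n]$, so the equivalence classes of spraiges in fact form a groupoid under $\ast$, but only the left inverse is needed here.)

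To finish I would use that $\ast$ is well defined on equivalence classes of spraiges and associative — proved exactly as in Section~\ref{sec:braided_sV_setup} for $s\Vbr$, run with multicolored forests in place of trees — together with the fact that $\id_m$ is a left identity on $s\spraige_{m,\ell}$ (stacking $\bullet_m$ on top of a spraige does nothing). Then from $[\sigma\ast\tau_1]=[\sigma\ast\tau_2]$ we obtain $[\tau_1]=[\bar\sigma\ast(\sigma\ast\tau_1)]=[\bar\sigma\ast(\sigma\ast\tau_2)]=[\tau_2]$.

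I do not expect a genuine obstacle here: the one substantive idea is that spraiges are invertible, and the computation $[\bar\sigma\ast\sigma]=[\id_m]$ above is entirely elementary. What does require a little care is bookkeeping rather than mathematics — namely confirming that the well-definedness and associativity of $\ast$, established in Section~\ref{sec:braided_sV_setup} only for $(1,1)$-spraiges, carry over verbatim to general $(n,m)$-spraiges (they do: expansions, reductions, the braided cross relation, and the ``double a strand'' recipe for braids all make sense for forests exactly as for trees), plus the trivial remark that $\bullet_m$ is a left-neutral element for stacking.
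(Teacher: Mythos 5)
Your proposal is correct and is essentially the paper's own route: the paper's remark on the groupoid structure of $s\spraige$ records exactly your reverse spraige $(F_+,b^{-1},F_-)$ as a two-sided inverse of $(F_-,b,F_+)$, with $(F_+,b^{-1},F_-)\ast(F_-,b,F_+)=1_m$ obtained just as you compute it (matching forests, $b^{-1}b$ trivial, then caret-by-caret reduction), and left cancellation follows by multiplying on the left by this inverse. Your extra bookkeeping about well-definedness and associativity of $\ast$ for general forest diagrams is the same tacit extension the paper makes when it passes from $(1,1)$-spraiges to arbitrary spraiges.
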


Note that a multicolored $n$-spraige has $n$ heads, but can have any number of feet. Later we will need to use the ``number of feet'' function, which we define as $f\colon s\spraige\to\N$ given by $f(\sigma)=m$ if $\sigma\in s\spraige_{n,m}$ for some $n$.

The pictures in Figure \ref{fig:spraiges-multiplication} are examples of multicolored spraiges. As a remark, in the figures, a single-node tree will sometimes be elongated to an uncolored edge, for aesthetic reasons. One can generalize the notion of reduction, expansion, and the braided cross relation to arbitrary multicolored spraiges, and consider equivalence classes under these moves. We will just call an equivalence class of multicolored spraiges a multicolored spraige, so in particular we have the following observation: 

\begin{observation}
	$s\Vbr$ equals the set of multicolored $(1,1)$-spraiges $s\spraige_{1,1}$.
\end{observation}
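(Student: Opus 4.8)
The statement is essentially a matter of unwinding the definitions, so the plan is to give a short verification rather than a substantive argument. The key observation is that a multicolored forest with a single root is, by definition, nothing more than a single rooted binary multicolored tree. Hence a braided paired multicolored forest diagram $(F_-,b,F_+)$ in which $F_-$ has one root and $F_+$ has one root is literally the same datum as a braided paired multicolored tree diagram $(T_-,b,T_+)$, via $T_-=F_-$ and $T_+=F_+$, with $b\in B_k$ for $k$ the common number of leaves. So as sets of diagrams, the $(1,1)$-spraiges and the braided paired multicolored tree diagrams coincide on the nose.

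Next I would check that the equivalence relation on spraiges restricts, in the $(1,1)$ case, to the equivalence relation defining $s\Vbr$. Each generating move — reduction, expansion, and the braided cross relation — is a purely local modification of the diagram: it replaces a caret, or a small configuration of carets together with a local piece of braid, on one or both of the outer forests, and it acts only on leaves and internal carets, never on roots. In particular no such move creates or destroys a root, so all three moves preserve the class of $(1,1)$-spraiges, and on that class they are exactly the reduction, expansion, and braided cross relation moves on braided paired multicolored tree diagrams introduced earlier in this section. Consequently two $(1,1)$-spraiges are equivalent as spraiges if and only if they are equivalent as braided paired multicolored tree diagrams, and therefore $s\spraige_{1,1}=s\Vbr$ as sets. (Although the statement only asserts set equality, one also sees that the composition of $(1,1)$-spraiges is the stacking operation $\ast$ used to define multiplication in $s\Vbr$, so the identification is an isomorphism of groups.)

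The only point requiring any care is the bookkeeping in the second paragraph: one must make sure the \emph{generalized} moves on forest diagrams, when one or both outer forests happen to be trees, genuinely reduce to the tree-diagram moves and cannot secretly pass through a configuration with more than one root on the relevant side. This is immediate from the local description of the moves, so I expect no genuine obstacle here; the whole argument is a definition chase.
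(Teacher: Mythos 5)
Your proposal is correct and matches the paper's treatment: the paper states this as an immediate observation once reductions, expansions, and the braided cross relation have been generalized to spraiges, exactly the definition chase you carry out (a one-rooted multicolored forest is a tree, and the local moves neither create nor destroy roots, so the two equivalence relations coincide on $(1,1)$-spraiges). No issues.
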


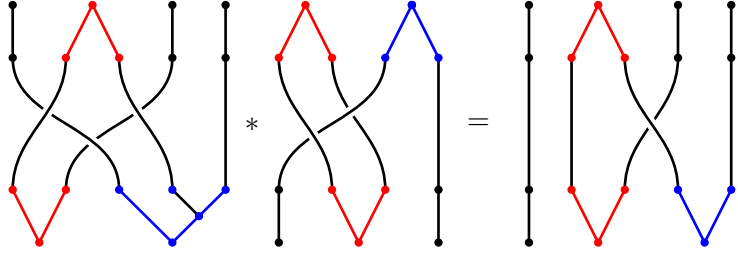
\begin{figure}[t]
\centering
\begin{tikzpicture}[line width=1pt,scale=0.7]
  
  \draw
   (0,0) -- (0,-1)   
   (3,0) -- (3,-1)
   (4,0) -- (4,-3.5);
   \draw[red]
   (1,-1) -- (1.5,0) -- (2,-1);
  \draw
   (3,-1) to [out=270, in=90] (1,-3.5);
  \draw[line width=4pt, white]
   (0,-1) to [out=270, in=90] (2,-3.5);
  \draw
   (0,-1) to [out=270, in=90] (2,-3.5);
  \draw[line width=4pt, white]
   (1,-1) to [out=270, in=90] (0,-3.5);
  \draw
   (1,-1) to [out=270, in=90] (0,-3.5);
  \draw[line width=4pt, white]
   (2,-1) to [out=270, in=90] (3,-3.5);
  \draw
   (2,-1) to [out=270, in=90] (3,-3.5) -- (3.5,-4);
  \draw[blue]
   (2,-3.5) -- (3,-4.5) -- (4,-3.5);
   \draw[red]
   (0,-3.5) -- (0.5,-4.5) -- (1,-3.5);
  \node at (4.5,-2.25) {$\ast$};
  \filldraw
  (0,0) circle (1.5pt)
  (3,0) circle (1.5pt)
  (4,0) circle (1.5pt)   
  (0,-1) circle (1.5pt)   
  (3,-1) circle (1.5pt)   
  (4,-1) circle (1.5pt);  
  \filldraw[red]
   (1,-1) circle (1.5pt)   
   (2,-1) circle (1.5pt)   
   (1.5,0) circle (1.5pt)      
   (0,-3.5) circle (1.5pt)   
   (1,-3.5) circle (1.5pt)   
   (0.5,-4.5) circle (1.5pt);
  \filldraw[blue]
   (2,-3.5) circle (1.5pt)  
   (3,-3.5) circle (1.5pt)   
   (4,-3.5) circle (1.5pt)
   (3.5,-4) circle (1.5pt)   
   (3,-4.5) circle (1.5pt);

 \begin{scope}[xshift=5cm]
  \draw[red]
   (0,-1) -- (0.5,0) -- (1,-1);
   \draw[blue]
   (2,-1) -- (2.5,0) -- (3,-1); 
   \draw
   (3,-1) -- (3,-4.5);
   \draw
   (1,-1) to [out=270, in=90] (2,-3.5);
  \draw[line width=4pt, white]
   (2,-1) to [out=270, in=90] (0,-3.5);
  \draw
   (2,-1) to [out=270, in=90] (0,-3.5);
  \draw[line width=4pt, white]
   (0,-1) to [out=270, in=90] (1,-3.5);
  \draw
   (0,-1) to [out=270, in=90] (1,-3.5);
   \draw[red]
   (1,-3.5) -- (1.5,-4.5) -- (2,-3.5);
   \draw
   (0,-3.5) -- (0,-4.5);
  \node at (3.75,-2.25) {$=$};
  \filldraw
  	(3,-4.5) circle (1.5pt)
	(0,-3.5) circle (1.5pt)   
  	(3,-3.5) circle (1.5pt)   
  	(0,-4.5) circle (1.5pt);
  \filldraw[blue]
  	(3,-1) circle (1.5pt)   
  	(2,-1) circle (1.5pt)   
   	(2.5,0) circle (1.5pt)   
   	(2.5,0) circle (1.5pt);
  \filldraw[red]
   	(0.5,0) circle (1.5pt)   
   	(0,-1) circle (1.5pt)  
	(1,-1) circle (1.5pt)   
	(1,-3.5) circle (1.5pt)   
	(2,-3.5) circle (1.5pt)   
	(1.5,-4.5) circle (1.5pt);

 \end{scope}

 \begin{scope}[xshift=9.5cm]
  \draw
   (0.2,0) -- (0.2,-4.5)
   (3,0) -- (3,-1)
   (4,0) -- (4,-3.5)
   (1,-3.5) -- (1,-1);
   \draw[red]
   (1,-1)-- (1.5,0) -- (2,-1);
   \draw
   (3,-1) to [out=270, in=90] (2,-3.5);
  \draw[line width=4pt, white]
   (2,-1) to [out=270, in=90] (3,-3.5);
  \draw
   (2,-1) to [out=270, in=90] (3,-3.5);
   \draw[red]
   (1,-3.5) -- (1.5,-4.5) -- (2,-3.5);
   \draw[blue]
   (3,-3.5) -- (3.5,-4.5) -- (4,-3.5);
  \filldraw
   	(0.2,0) circle (1.5pt)
   	(3,0) circle (1.5pt)
	(4,0) circle (1.5pt)
	(0.2,-1) circle (1.5pt)
	(3,-1) circle (1.5pt)   
	(4,-1) circle (1.5pt)
	(0.2,-3.5) circle (1.5pt)
	(0.2,-4.5) circle (1.5pt);
   \filldraw[red] 
	(1.5,0) circle (1.5pt)
	(1,-1) circle (1.5pt)   
	(2,-1) circle (1.5pt)    
	(1,-3.5) circle (1.5pt)   
	(2,-3.5) circle (1.5pt)   
	(1.5,-4.5) circle (1.5pt);
	\filldraw[blue]
	(3,-3.5) circle (1.5pt)   
	(4,-3.5) circle (1.5pt)   
	(3.5,-4.5) circle (1.5pt);
 \end{scope}
\end{tikzpicture}
\caption{Multiplication of multicolored spraiges.}
\label{fig:spraiges-multiplication}
\end{figure}

The operation $\ast$ defined for $s\Vbr$ can be defined in general for multicolored spraiges, via concatenation of diagrams.  It is only defined for certain pairs of multicolored spraiges, namely we can multiply $\sigma_1\ast\sigma_2$ for $\sigma_1\in s_1\spraige_{n_1,m_1}$ and $\sigma_2 \in s_2\spraige_{n_2,m_2}$ if and only if $m_1=n_2$.  In this case we obtain $\sigma_1 \ast \sigma_2 \in \max\{s_1, s_2\}\spraige_{n_1,m_2}$. For an example of the multiplication of multicolored spraiges see Figure \ref{fig:spraiges-multiplication}.

\begin{remark}
	As a remark, the following observations show that $s\spraige$ is a groupoid.
\begin{itemize}
\item[(i)] For every $n\in \N$ there is an identity multicolored
$(n,n)$-spraige $1_n$ with respect to $\ast$, namely the multicolored spraige represented by $(1_n,\id,1_n)$.  Here, by abuse of notation, $1_n$ also denotes the trivial forest with $n$ roots.

\item[(ii)] For every multicolored $(n,m)$-spraige $(F_-,b,F_+)$ there exists an inverse multicolored $(m,n)$-spraige $(F_+,b^{-1},F_-)$, in the sense that
\begin{align*}
(F_-,b,F_+) \ast (F_+,b^{-1},F_-) & = 1_n \\
\intertext{and}
(F_+,b^{-1},F_-)\ast(F_-,b,F_+) & = 1_m \text{\,.}
\end{align*}

\end{itemize}
\end{remark}

\medskip


\begin{figure}[t]
\centering
\begin{tikzpicture}
 \begin{scope}[yshift=-1cm, line width=1pt,scale=0.9]
 	\coordinate (a) at (0,0);
 	\coordinate (b) at ($(a) + (1,0)$);
 	\coordinate (c) at ($(a) + (2,0)$);
 	\coordinate (d) at ($(a) + (3,0)$);
 	\coordinate (e) at ($(a) + (4,0)$);
 	\coordinate (f) at ($(a) + (5,0)$);
 	\coordinate (g) at ($(a) + (0.5,-1)$);
 	\coordinate (h) at ($(a) + (1.5,-1)$);
 	\coordinate (i) at ($(a) + (3.5,-1)$);
 	\coordinate (j) at ($(a) + (4.5,-1)$);

	\draw[red]
	(g) -- (b) -- (h);
	
	\draw[blue]
	(i) -- (e) -- (j);
 	
	\filldraw
(a) circle (1pt)
(c) circle (1pt)
(d) circle (1pt)
(f) circle (1pt);
\filldraw[red]
(b) circle (1pt)
(g) circle (1pt)
(h) circle (1pt);
\filldraw[blue]
(e) circle (1pt)
(i) circle (1pt)
(j) circle (1pt);
 \end{scope}

\begin{scope}[yshift=1.3cm, line width=1pt,scale=0.9]
	\coordinate (a) at (0,0);
	\coordinate (b) at ($(a) + (1,0)$);
	\coordinate (c) at ($(a) + (2,0)$);
	\coordinate (d) at ($(a) + (3,0)$);
	\coordinate (e) at ($(a) + (4,0)$);
	\coordinate (f) at ($(a) + (5,0)$);
	\coordinate (g) at ($(a) + (0.5,-1)$);
	\coordinate (h) at ($(a) + (1.5,-1)$);
	\coordinate (i) at ($(a) + (3.5,-1)$);
	\coordinate (j) at ($(a) + (4.5,-1)$);
	\coordinate (k) at ($(a) + (4,-2)$);
	\coordinate (l) at ($(a) + (5,-2)$);
	
	\draw[red]
	(g) -- (b) -- (h)
	(k) -- (j) -- (l);
	
	\draw[blue]
	(i) -- (e) -- (j);
	
	\filldraw
	(a) circle (1pt)
	(c) circle (1pt)
	(d) circle (1pt)
	(f) circle (1pt);
	\filldraw[red]
	(b) circle (1pt)
	(g) circle (1pt)
	(h) circle (1pt)
	(k) circle (1pt)
	(l) circle (1pt)
	(j) circle (1pt);
	\filldraw[blue]
	(e) circle (1pt)
	(i) circle (1pt);
\end{scope}

\end{tikzpicture}
\caption{On top is a multicolored elementary forest and on the bottom is a multicolored very elementary forest.}
\label{fig:elem_forests}
\end{figure}
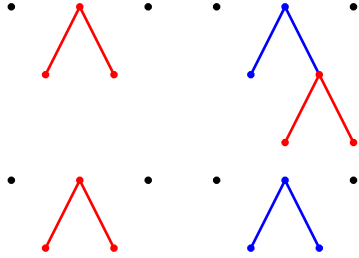

We will need a further understanding of a certain characteristic of a multicolored forest in order to set up the Stein complex in following section. 
\begin{definition}
	A multicolored forest is characterized by being \textit{very elementary} if there is at most one caret per head, and any such forest is then called a \textit{very elementary multicolored forest}. A multicolored forest is called \textit{elementary} if there is at most one caret per color per head, and any such forest is then called an \textit{elementary multicolored forest}. See Figure \ref{fig:elem_forests} for an example of an elementary multicolored forest and a very elementary multicolored forest.
\end{definition}

Observe that if $s=1$, that is, we are only dealing with a single color for the entire tree, then elementary and very elementary mean the same thing. 

For a given multicolored $(n, m)$-spraige $\sigma$, consider any multicolored forest $F$ with $m$ roots and $l$ leaves. Define a \textit{multicolored splitting of $\sigma$ by $F$} as multiplying $\sigma$ on the right by the multicolored spraige $(F, id, 1_l)$. In a similar manner, one says that we have a \textit{multicolored merging of $\sigma$ by $F'$} when we multiply on the right by the multicolored spraige $(1_m, id, F')$.  Moreover, if $F$ (respectively $F'$) is a (very) elementary forest, then we call this process a \textit{(very) elementary splitting} (respectively \textit{(very) elementary merging}). If $F$ (respectively $F'$) were a single caret we will shorten it by saying \textit{adding a split} (respectively \textit{adding a merge}) to the multicolored spraige. For an illustration of this splitting of a multicolored spraige see Figure \ref{fig:multicolored_spraige_splitting} and this multicolored merging of a spraige see Figure \ref{fig:multicolored_spraige_merging}.

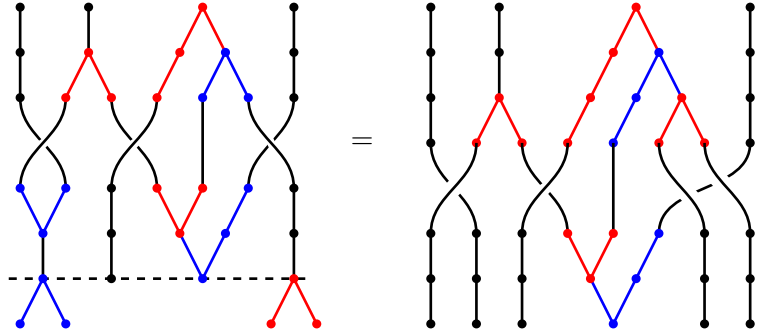
\begin{figure}[t]
	\centering
		\begin{tikzpicture}[line width=1pt,scale=0.6]
		\coordinate (a) at (0,0);
		\coordinate (b) at ($(a)+(1,0)$);
		\coordinate (c) at ($(a)+(2,0)$);
		\coordinate (d) at ($(a)+(3,0)$);
		\coordinate (e) at ($(a)+(4,0)$);
		\coordinate (f) at ($(a)+(5,0)$);
		\coordinate (g) at ($(a)+(6,0)$);
		\coordinate (h) at ($(a)+(0,1)$);
		\coordinate (i) at ($(a)+(1.5,1)$);
		\coordinate (j) at ($(a)+(3.5,1)$);
		\coordinate (k) at ($(a)+(4.5,1)$);
		\coordinate (l) at ($(a)+(4,2)$);
		\coordinate (m) at ($(a)+(0,2)$);
		\coordinate (na) at ($(a)+(1.5,2)$);
		\coordinate (n) at ($(a)+(6,1)$);
		\coordinate (o) at ($(a)+(6,2)$);
		
		\draw
		(a) -- (m)
		(g) -- (o)
		(i) -- (na);
		\draw[red]
		(b) -- (i) -- (c)
		(d) -- (l) -- (k);
		\draw[blue]
		(e) -- (k) -- (f);

		\coordinate (p) at ($(a)+(0,-2)$);
		\coordinate (q) at ($(a)+(1,-2)$);
		\coordinate (r) at ($(a)+(2,-2)$);
		\coordinate (s) at ($(a)+(3,-2)$);
		\coordinate (t) at ($(a)+(4,-2)$);
		\coordinate (u) at ($(a)+(5,-2)$);
		\coordinate (v) at ($(a)+(6,-2)$);
		\coordinate (w) at ($(a)+(.5,-3)$);
		\coordinate (wa) at ($(a)+(.5,-4)$);
		\coordinate (x) at ($(a)+(2,-3)$);
		\coordinate (xa) at ($(a)+(2,-4)$);
		\coordinate (y) at ($(a)+(3.5,-3)$);
		\coordinate (z) at ($(a)+(4.5,-3)$);
		\coordinate (aa) at ($(a)+(6,-3)$);
		\coordinate (ab) at ($(a)+(4,-4)$);
		\coordinate (ac) at ($(a)+(6,-4)$);

		\draw
		 (r) -- (x) -- (xa)
		 (v) -- (aa) -- (ac)
		 (w) -- (wa);
		\draw[blue]
		 (p) -- (w) -- (q)
		 (y) -- (ab) -- (z) -- (u);
		\draw[red]
		 (s) -- (y) -- (t);
		 
		\draw
		 (a) to [out=-90, in=90] (q)
		 (c) to [out=-90, in=90] (s)
		 (g) to [out=-90, in=90] (u);
 		\draw[white, line width=3pt]
 		 (1.25, 0) to [out=-90, in=90] (-.25, -2)   
		 (.75,-0.5) to [out=-90, in=90] (-.25,-2)
		 (3.25, 0) to [out=-90, in=90] (1.75, -2)
		 (2.75,-0.5) to [out=-90, in=90] (1.75,-2)
		 (5.25,-0.1) to [out=-90, in=90] (5.75,-1.9);
		\draw
	     (b) to [out=-90, in=90] (p)
	     (d) to [out=-90, in=90] (r)
	     (f) to [out=-90, in=90] (v); 
		 \draw
		 (e) to [out=-90, in=90] (t);   
		 
		 \draw[dashed]
		 (-.25,-4) -- (6.25, -4);

		 \filldraw (a) circle (2pt);
		 \filldraw[red] (b) circle (2pt);
		 \filldraw[red] (c) circle (2pt);
		 \filldraw[red] (d) circle (2pt);
		 \filldraw[blue] (e) circle (2pt);
		 \filldraw[blue] (f) circle (2pt);
		 \filldraw (g) circle (2pt);
		 \filldraw (h) circle (2pt);
		 \filldraw[red] (i) circle (2pt);
		 \filldraw[red] (j) circle (2pt);
		 \filldraw[blue] (k) circle (2pt);
		 \filldraw[red] (l) circle (2pt);
		 \filldraw (m) circle (2pt);
		 \filldraw (na) circle (2pt);
		 \filldraw (n) circle (2pt);
		 \filldraw (o) circle (2pt);
		 
		 \filldraw[blue] (p) circle (2pt);
		 \filldraw[blue] (q) circle (2pt);
		 \filldraw (r) circle (2pt);
		 \filldraw[red] (s) circle (2pt);
		 \filldraw[red] (t) circle (2pt);
		 \filldraw[blue] (u) circle (2pt);
		 \filldraw (v) circle (2pt);
		 \filldraw[blue] (w) circle (2pt);
		 \filldraw[blue] (wa) circle (2pt);
		 \filldraw (x) circle (2pt);
		 \filldraw (xa) circle (2pt);
		 \filldraw[red] (y) circle (2pt);
		 \filldraw[blue] (z) circle (2pt);
		 \filldraw (aa) circle (2pt);
		 \filldraw[blue] (ab) circle (2pt);
		 \filldraw[red] (ac) circle (2pt);
		 
		\coordinate (wb) at ($(wa)+(-.5,-1)$);
		\coordinate (wc) at ($(wa)+(.5,-1)$);
		\coordinate (ad) at ($(ac)+(-.5,-1)$);
		\coordinate (ae) at ($(ac)+(.5,-1)$);
		
		\filldraw[blue] (wb) circle (2pt);
		\filldraw[blue] (wc) circle (2pt);
		\filldraw[red] (ad) circle (2pt);
		\filldraw[red] (ae) circle (2pt);
		\draw[blue]
		 (wb) -- (wa) -- (wc);
		\draw[red]
		 (ad) -- (ac) -- (ae);
		
		\node at (7.5,-1) {$=$};
		
		\coordinate (a') at (9,-1);
		\coordinate (b') at ($(a')+(1,0)$);
		\coordinate (c') at ($(a')+(2,0)$);
		\coordinate (d') at ($(a')+(3,0)$);
		\coordinate (e') at ($(a')+(4,0)$);
		\coordinate (f') at ($(a')+(5,0)$);
		\coordinate (g') at ($(a')+(6,0)$);
		\coordinate (h') at ($(a')+(7,0)$);
		\coordinate (i') at ($(a')+(0,1)$);
		\coordinate (j') at ($(a')+(1.5,1)$);
		\coordinate (k') at ($(a')+(3.5,1)$);
		\coordinate (l') at ($(a')+(4.5,1)$);
		\coordinate (m') at ($(a')+(5.5,1)$);
		\coordinate (n') at ($(a')+(7,1)$);
		\coordinate (o') at ($(a')+(0,2)$);
		\coordinate (p') at ($(a')+(1.5,2)$);
		\coordinate (q') at ($(a')+(4,2)$);
		\coordinate (r') at ($(a')+(5,2)$);
		\coordinate (s') at ($(a')+(7,2)$);
		\coordinate (t') at ($(a')+(0,3)$);
		\coordinate (u') at ($(a')+(1.5,3)$);
		\coordinate (v') at ($(a')+(4.5,3)$);
		\coordinate (w') at ($(a')+(7,3)$);
		
		\draw
		(a') -- (t')
		(j') -- (u')
		(h') -- (w');
		\draw[blue]
		(e') -- (r') -- (m');
		\draw[red]
		(b') -- (j') -- (c')
		(d') -- (v') -- (r')
		(f') -- (m') -- (g');
		
		\filldraw (a') circle (2pt);
		\filldraw[red] (b') circle (2pt);
		\filldraw[red] (c') circle (2pt);
		\filldraw[red] (d') circle (2pt);
		\filldraw[blue] (e') circle (2pt);
		\filldraw[red] (f') circle (2pt);
		\filldraw[red] (g') circle (2pt);
		\filldraw (h') circle (2pt);
		\filldraw (i') circle (2pt);
		\filldraw[red] (j') circle (2pt);
		\filldraw[red] (k') circle (2pt);
		\filldraw[blue] (l') circle (2pt);
		\filldraw[red] (m') circle (2pt);
		\filldraw (n') circle (2pt);
		\filldraw (o') circle (2pt);
		\filldraw (p') circle (2pt);
		\filldraw[red] (q') circle (2pt);
		\filldraw[blue] (r') circle (2pt);
		\filldraw (s') circle (2pt);
		\filldraw (t') circle (2pt);
		\filldraw (u') circle (2pt);
		\filldraw[red] (v') circle (2pt);
		\filldraw (w') circle (2pt);
		
		\coordinate (x') at ($(a')+(0,-2)$);
		\coordinate (y') at ($(a')+(1,-2)$);
		\coordinate (z') at ($(a')+(2,-2)$);
		\coordinate (aa') at ($(a')+(3,-2)$);
		\coordinate (ab') at ($(a')+(4,-2)$);
		\coordinate (ac') at ($(a')+(5,-2)$);
		\coordinate (ad') at ($(a')+(6,-2)$);
		\coordinate (ae') at ($(a')+(7,-2)$);
		\coordinate (af') at ($(a')+(0,-3)$);
		\coordinate (ag') at ($(a')+(1,-3)$);
		\coordinate (ah') at ($(a')+(2,-3)$);
		\coordinate (ai') at ($(a')+(3.5,-3)$);
		\coordinate (aj') at ($(a')+(4.5,-3)$);
		\coordinate (ak') at ($(a')+(6,-3)$);
		\coordinate (al') at ($(a')+(7,-3)$);
		\coordinate (am') at ($(a')+(0,-4)$);
		\coordinate (an') at ($(a')+(1,-4)$);
		\coordinate (ao') at ($(a')+(2,-4)$);
		\coordinate (ap') at ($(a')+(4,-4)$);
		\coordinate (aq') at ($(a')+(6,-4)$);
		\coordinate (ar') at ($(a')+(7,-4)$);
		
		\draw
		(x') -- (am')
		(y') -- (an')
		(z') -- (ao')
		(ad') -- (aq')
		(ae') -- (ar');
		\draw[blue]
		(ai') -- (ap') -- (ac');
		\draw[red]
		(aa') -- (ai') -- (ab');
		
				\draw
				(a') to [out=-90, in=90] (y')
				(c') to [out=-90, in=90] (aa')
				(e') to [out=-90, in=90] (ab')
				(h') to [out=-90, in=90] (ac');
				\draw[white, line width=4pt]
				(9.9, -1.1) to [out=-90, in=90] (8.9, -2.9)   
				(10.1,-1.1) to [out=-90, in=90] (9.1,-2.9)
				(12.25, -1) to [out=-90, in=90] (10.75, -3)
				(11.75,-1.5) to [out=-90, in=90] (10.75,-3)
				(13.9,-1.1) to [out=-90, in=90] (14.9,-2.9)
				(14.1,-1.1) to [out=-90, in=90] (15.1,-2.9)
				(14.9,-1.1) to [out=-90, in=90] (15.9,-2.9)
				(15.1,-1.1) to [out=-90, in=90] (16.1,-2.9);
				\draw
				(b') to [out=-90, in=90] (x')
				(d') to [out=-90, in=90] (z')
				(f') to [out=-90, in=90] (ad')
				(g') to [out=-90, in=90] (ae'); 
				
				\filldraw (x') circle (2pt);
				\filldraw (y') circle (2pt);
				\filldraw (z') circle (2pt);
				\filldraw[red] (aa') circle (2pt);
				\filldraw[red] (ab') circle (2pt);
				\filldraw[blue] (ac') circle (2pt);
				\filldraw (ad') circle (2pt);
				\filldraw (ae') circle (2pt);
				\filldraw (af') circle (2pt);
				\filldraw (ag') circle (2pt);
				\filldraw (ah') circle (2pt);
				\filldraw[red] (ai') circle (2pt);
				\filldraw[blue] (aj') circle (2pt);
				\filldraw (ak') circle (2pt);
				\filldraw (al') circle (2pt);
				\filldraw (am') circle (2pt); 
				\filldraw (an') circle (2pt);
				\filldraw (ao') circle (2pt); 
				\filldraw[blue] (ap') circle (2pt); 
				\filldraw (aq') circle (2pt); 
				\filldraw (ar') circle (2pt); 
				
		\end{tikzpicture}
	\caption{A splitting of a multicolored spraige.}
	\label{fig:multicolored_spraige_splitting}
\end{figure}

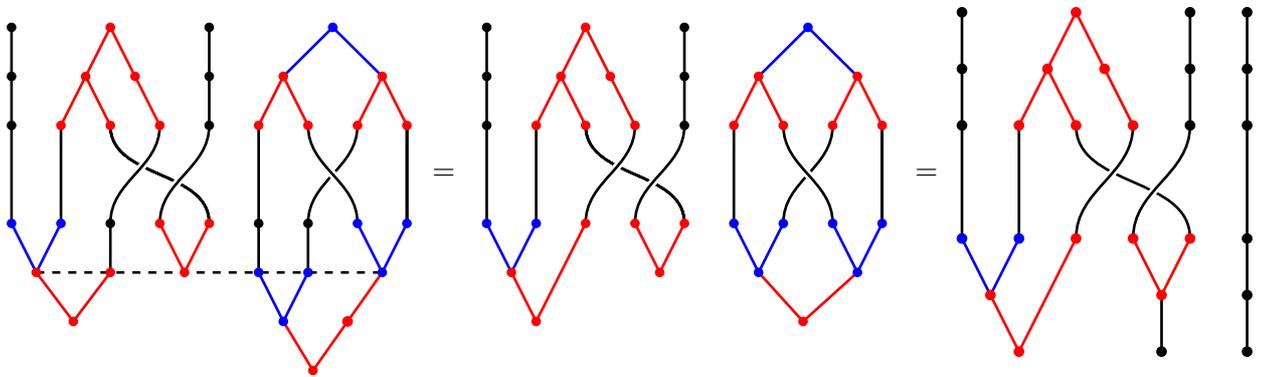
\begin{figure}[t]
	\centering
		\begin{tikzpicture}[line width=1pt]
		\begin{scope}[xshift=-14.25cm,xscale=0.65,yscale=0.65]
			\coordinate (a) at (0,0);
			\coordinate (b) at ($(a)+(1,0)$);
			\coordinate (c) at ($(a)+(2,0)$);
			\coordinate (d) at ($(a)+(3,0)$);
			\coordinate (e) at ($(a)+(4,0)$);
			\coordinate (f) at ($(a)+(5,0)$);
			\coordinate (g) at ($(a)+(6,0)$);
			\coordinate (h) at ($(a)+(7,0)$);
			\coordinate (h') at ($(a)+(8,0)$);
			\coordinate (i) at ($(a)+(0,1)$);
			\coordinate (j) at ($(a)+(1.5,1)$);
			\coordinate (k) at ($(a)+(2.5,1)$);
			\coordinate (l) at ($(a)+(4,1)$);
			\coordinate (m) at ($(a)+(5.5,1)$);
			\coordinate (n) at ($(a)+(7.5,1)$);
			\coordinate (o) at ($(a)+(0,2)$);
			\coordinate (p) at ($(a)+(2,2)$);
			\coordinate (q) at ($(a)+(4,2)$);
			\coordinate (r) at ($(a)+(6.5,2)$);
			
			\coordinate (s) at ($(a)+(0,-2)$);
			\coordinate (t) at ($(a)+(1,-2)$);
			\coordinate (u) at ($(a)+(2,-2)$);
			\coordinate (v) at ($(a)+(3,-2)$);
			\coordinate (w) at ($(a)+(4,-2)$);
			\coordinate (x) at ($(a)+(5,-2)$);
			\coordinate (y) at ($(a)+(6,-2)$);
			\coordinate (z) at ($(a)+(7,-2)$);
			\coordinate (z') at ($(a)+(8,-2)$);
			\coordinate (aa) at ($(a)+(.5,-3)$);
			\coordinate (ab) at ($(a)+(2,-3)$);
			\coordinate (ac) at ($(a)+(3.5,-3)$);
			\coordinate (ad) at ($(a)+(5,-3)$);
			\coordinate (ad') at ($(a)+(6,-3)$);
			\coordinate (ae) at ($(a)+(7.5,-3)$);
			\coordinate (af) at ($(a)+(1.25,-4)$);
			\coordinate (ag) at ($(a)+(5.5,-4)$);
			\coordinate (ah) at ($(a)+(6.8,-4)$);
			\coordinate (ai) at ($(a)+(6.1,-5)$);
			
			\draw
			(a) -- (o)
			(a) -- (s)
			(b) -- (t)
			(e) -- (q)
			(f) -- (x)
			(h') -- (z')
			(u) -- (ab)
			(x) -- (ad)
			(y) -- (ad')
			(h') -- (z');
			\draw[red]
			(b) -- (j) -- (c)
			(f) -- (m) -- (g)
			(j) -- (p) -- (d)
			(h) -- (n) -- (h')
			(v) -- (ac) -- (w)
			(aa) -- (af) -- (ab)
			(ag) -- (ai) -- (ae);
			\draw[blue]
			(m) -- (r) -- (n)
			(s) -- (aa) -- (t)
			(z) -- (ae) -- (z')
			(ad) -- (ag) -- (ad');

			\draw
			(c) to [out=-90, in=90] (w)
			(c) to [out=-90, in=90] (w)
			(h) to [out=-90, in=90] (y);
			\draw[white, line width=3pt]
			(d) to [out=-90, in=90] (u)
			(e) to [out=-90, in=90] (v)
			(g) to [out=-90, in=90] (z);
			\draw
			(d) to [out=-90, in=90] (u)
			(e) to [out=-90, in=90] (v)
			(g) to [out=-90, in=90] (z);
			\draw[dashed]
			(aa) -- (ae);
			
			\node at (8.75,-1) {$=$};
			
			\filldraw (a) circle (2pt);
			\filldraw[red] (b) circle (2pt);
			\filldraw[red] (c) circle (2pt);
			\filldraw[red] (d) circle (2pt);
			\filldraw (e) circle (2pt);
			\filldraw[red] (f) circle (2pt);
			\filldraw[red] (g) circle (2pt);
			\filldraw[red] (h) circle (2pt);
			\filldraw[red] (h') circle (2pt);
			\filldraw (i) circle (2pt);
			\filldraw[red] (j) circle (2pt);
			\filldraw[red] (k) circle (2pt);
			\filldraw (l) circle (2pt);
			\filldraw[red] (m) circle (2pt);
			\filldraw[red] (n) circle (2pt);
			\filldraw (o) circle (2pt);
			\filldraw[red] (p) circle (2pt);
			\filldraw (q) circle (2pt);
			\filldraw[blue] (r) circle (2pt);
			
			\filldraw[blue] (s) circle (2pt);
			\filldraw[blue] (t) circle (2pt);
			\filldraw (u) circle (2pt);
			\filldraw[red] (v) circle (2pt);
			\filldraw[red] (w) circle (2pt);
			\filldraw (x) circle (2pt);
			\filldraw (y) circle (2pt);
			\filldraw[blue] (z) circle (2pt);
			\filldraw[blue] (z') circle (2pt);
			\filldraw[red] (aa) circle (2pt);
			\filldraw[red] (ab) circle (2pt);
			\filldraw[red] (ac) circle (2pt);
			\filldraw[blue] (ad) circle (2pt);
			\filldraw[blue] (ad') circle (2pt);
			\filldraw[blue] (ae) circle (2pt);
			\filldraw[red] (af) circle (2pt);
			\filldraw[blue] (ag) circle (2pt);
			\filldraw[red] (ah) circle (2pt);
			\filldraw[red] (ai) circle (2pt);			

		\end{scope}

		\begin{scope}[xshift=-8cm,xscale=0.65,yscale=0.65]
	\coordinate (a) at (0,0);
	\coordinate (b) at ($(a)+(1,0)$);
	\coordinate (c) at ($(a)+(2,0)$);
	\coordinate (d) at ($(a)+(3,0)$);
	\coordinate (e) at ($(a)+(4,0)$);
	\coordinate (f) at ($(a)+(5,0)$);
	\coordinate (g) at ($(a)+(6,0)$);
	\coordinate (h) at ($(a)+(7,0)$);
	\coordinate (h') at ($(a)+(8,0)$);
	\coordinate (i) at ($(a)+(0,1)$);
	\coordinate (j) at ($(a)+(1.5,1)$);
	\coordinate (k) at ($(a)+(2.5,1)$);
	\coordinate (l) at ($(a)+(4,1)$);
	\coordinate (m) at ($(a)+(5.5,1)$);
	\coordinate (n) at ($(a)+(7.5,1)$);
	\coordinate (o) at ($(a)+(0,2)$);
	\coordinate (p) at ($(a)+(2,2)$);
	\coordinate (q) at ($(a)+(4,2)$);
	\coordinate (r) at ($(a)+(6.5,2)$);
	
	\coordinate (s) at ($(a)+(0,-2)$);
	\coordinate (t) at ($(a)+(1,-2)$);
	\coordinate (u) at ($(a)+(2,-2)$);
	\coordinate (v) at ($(a)+(3,-2)$);
	\coordinate (w) at ($(a)+(4,-2)$);
	\coordinate (x) at ($(a)+(5,-2)$);
	\coordinate (y) at ($(a)+(6,-2)$);
	\coordinate (z) at ($(a)+(7,-2)$);
	\coordinate (z') at ($(a)+(8,-2)$);
	\coordinate (aa) at ($(a)+(.5,-3)$);
	\coordinate (ab) at ($(a)+(2,-3)$);
	\coordinate (ac) at ($(a)+(3.5,-3)$);
	\coordinate (ad) at ($(a)+(5,-3)$);
	\coordinate (ad') at ($(a)+(6,-3)$);
	\coordinate (ae) at ($(a)+(7.5,-3)$);
	\coordinate (af) at ($(a)+(1,-4)$);
	\coordinate (ag) at ($(a)+(5.5,-3)$);
	\coordinate (ai) at ($(a)+(6.4,-4)$);
	\coordinate (aj) at ($(a)+(6.4,-4)$);
	
	\draw
	(a) -- (o)
	(a) -- (s)
	(b) -- (t)
	(e) -- (q)
	(f) -- (x)
	(h') -- (z')
	(h') -- (z');
	\draw[red]
	(b) -- (j) -- (c)
	(f) -- (m) -- (g)
	(j) -- (p) -- (d)
	(h) -- (n) -- (h')
	(v) -- (ac) -- (w)
	(aa) -- (af) -- (u)
	(ag) -- (ai) -- (ae);
	\draw[blue]
	(m) -- (r) -- (n)
	(s) -- (aa) -- (t)
	(z) -- (ae) -- (z')
	(x) -- (ag) -- (y);

	\draw
	(c) to [out=-90, in=90] (w)
	(c) to [out=-90, in=90] (w)
	(h) to [out=-90, in=90] (y);
	\draw[white, line width=3pt]
	(d) to [out=-90, in=90] (u)
	(e) to [out=-90, in=90] (v)
	(g) to [out=-90, in=90] (z);
	\draw
	(d) to [out=-90, in=90] (u)
	(e) to [out=-90, in=90] (v)
	(g) to [out=-90, in=90] (z);
	
	\node at (8.9,-1) {$=$};
	
	\filldraw (a) circle (2pt);
	\filldraw[red] (b) circle (2pt);
	\filldraw[red] (c) circle (2pt);
	\filldraw[red] (d) circle (2pt);
	\filldraw (e) circle (2pt);
	\filldraw[red] (f) circle (2pt);
	\filldraw[red] (g) circle (2pt);
	\filldraw[red] (h) circle (2pt);
	\filldraw[red] (h') circle (2pt);
	\filldraw (i) circle (2pt);
	\filldraw[red] (j) circle (2pt);
	\filldraw[red] (k) circle (2pt);
	\filldraw (l) circle (2pt);
	\filldraw[red] (m) circle (2pt);
	\filldraw[red] (n) circle (2pt);
	\filldraw (o) circle (2pt);
	\filldraw[red] (p) circle (2pt);
	\filldraw (q) circle (2pt);
	\filldraw[blue] (r) circle (2pt);
	
	\filldraw[blue] (s) circle (2pt);
	\filldraw[blue] (t) circle (2pt);
	\filldraw[red] (u) circle (2pt);
	\filldraw[red] (v) circle (2pt);
	\filldraw[red] (w) circle (2pt);
	\filldraw[blue] (x) circle (2pt);
	\filldraw[blue] (y) circle (2pt);
	\filldraw[blue] (z) circle (2pt);
	\filldraw[blue] (z') circle (2pt);
	\filldraw[red] (aa) circle (2pt);
	\filldraw[red] (ac) circle (2pt);
	\filldraw[blue] (ae) circle (2pt);
	\filldraw[red] (af) circle (2pt);
	\filldraw[blue] (ag) circle (2pt);
	\filldraw[red] (ah) circle (2pt);
	\filldraw[red] (ai) circle (2pt);
		\end{scope}
	
		\begin{scope}[xshift=-1.75cm,xscale=0.75,yscale=0.75]
\coordinate (a) at (0,0);
\coordinate (b) at ($(a)+(1,0)$);
\coordinate (c) at ($(a)+(2,0)$);
\coordinate (d) at ($(a)+(3,0)$);
\coordinate (e) at ($(a)+(4,0)$);
\coordinate (f) at ($(a)+(5,0)$);
\coordinate (f') at ($(a)+(5,1)$);
\coordinate (g) at ($(a)+(6,0)$);
\coordinate (h) at ($(a)+(7,0)$);
\coordinate (h') at ($(a)+(8,0)$);
\coordinate (i) at ($(a)+(0,1)$);
\coordinate (j) at ($(a)+(1.5,1)$);
\coordinate (k) at ($(a)+(2.5,1)$);
\coordinate (l) at ($(a)+(4,1)$);
\coordinate (m) at ($(a)+(5.5,1)$);
\coordinate (n) at ($(a)+(7.5,1)$);
\coordinate (o) at ($(a)+(0,2)$);
\coordinate (p) at ($(a)+(2,2)$);
\coordinate (q) at ($(a)+(4,2)$);
\coordinate (r) at ($(a)+(5,2)$);

\coordinate (s) at ($(a)+(0,-2)$);
\coordinate (t) at ($(a)+(1,-2)$);
\coordinate (u) at ($(a)+(2,-2)$);
\coordinate (v) at ($(a)+(3,-2)$);
\coordinate (w) at ($(a)+(4,-2)$);
\coordinate (x) at ($(a)+(5,-2)$);
\coordinate (x') at ($(a)+(5,-3)$);
\coordinate (x'') at ($(a)+(5,-4)$);
\coordinate (y) at ($(a)+(6,-2)$);
\coordinate (z) at ($(a)+(7,-2)$);
\coordinate (z') at ($(a)+(8,-2)$);
\coordinate (aa) at ($(a)+(.5,-3)$);
\coordinate (ab) at ($(a)+(2,-3)$);
\coordinate (ac) at ($(a)+(3.5,-3)$);
\coordinate (ad) at ($(a)+(5,-3)$);
\coordinate (ad') at ($(a)+(6,-3)$);
\coordinate (ae) at ($(a)+(7.5,-3)$);
\coordinate (af) at ($(a)+(1,-4)$);
\coordinate (ag) at ($(a)+(5.5,-3)$);
\coordinate (ai) at ($(a)+(6.4,-4)$);
\coordinate (aj) at ($(a)+(3.5,-4)$);

\draw
(a) -- (o)
(a) -- (s)
(b) -- (t)
(e) -- (q)
(r) -- (f) -- (x) -- (x') -- (x'')
(ac) -- (aj);
\draw[red]
(b) -- (j) -- (c)
(j) -- (p) -- (d)
(v) -- (ac) -- (w)
(aa) -- (af) -- (u);
\draw[blue]
(s) -- (aa) -- (t);

\draw
(c) to [out=-90, in=90] (w);
\draw[white, line width=3pt]
(d) to [out=-90, in=90] (u)
(e) to [out=-90, in=90] (v);
\draw
(d) to [out=-90, in=90] (u)
(e) to [out=-90, in=90] (v);

\filldraw (a) circle (2pt);
\filldraw[red] (b) circle (2pt);
\filldraw[red] (c) circle (2pt);
\filldraw[red] (d) circle (2pt);
\filldraw (e) circle (2pt);
\filldraw (f) circle (2pt);
\filldraw (f') circle (2pt);
\filldraw (i) circle (2pt);
\filldraw[red] (j) circle (2pt);
\filldraw[red] (k) circle (2pt);
\filldraw (l) circle (2pt);
\filldraw (o) circle (2pt);
\filldraw[red] (p) circle (2pt);
\filldraw (q) circle (2pt);
\filldraw (r) circle (2pt);

\filldraw[blue] (s) circle (2pt);
\filldraw[blue] (t) circle (2pt);
\filldraw[red] (u) circle (2pt);
\filldraw[red] (v) circle (2pt);
\filldraw[red] (w) circle (2pt);
\filldraw (x) circle (2pt);
\filldraw (x') circle (2pt);
\filldraw (x'') circle (2pt);
\filldraw[red] (aa) circle (2pt);
\filldraw[red] (ac) circle (2pt);
\filldraw[red] (af) circle (2pt);
\filldraw[red] (ah) circle (2pt);
\filldraw (aj) circle (2pt);
\end{scope}

		\end{tikzpicture}
	\caption{From left to right: An elementary merging of a multicolored spraige, the resulting multicolored spraige, and an equivalent spraige after applying the braided cross relation.}
	\label{fig:multicolored_spraige_merging}
\end{figure}

We can specialize multicolored spraiges to multicolored braiges as follows. 

\begin{definition}
	We call a  multicolored spraige a \textit{multicolored braid-merge diagram}, abbreviated \textit{multicolored braige} if there are no splits, that is, it is of the form $(1_n, b, F) $ for $b \in B_n$ and $F$ is a multicolored forest having $n$ leaves. Moreover, if $F$ is a (very) elementary forest then we have an \textit{(very) elementary braige}. Analogously to multicolored spraiges, we say \textit{multicolored $n$-braige} and \textit{(very) elementary multicolored $n$-braige} to mean a multicolored braige with $n$ heads and any number of feet with the latter having a (very) elementary multicolored forest as defined.
\end{definition}

\subsection{Dangling multicolored spraiges}
\label{sec:dangling}

We can identify the braid group $B_n$ with a subgroup of $s\spraige_{n,n}$ via $b\mapsto (1_n,b,1_n)$. In particular for any $n,m\in\N$ there is a right action of the braid group $B_m$ on $2\spraige_{n,m}$, by right multiplication. Quotienting out modulo this action encodes the idea that the feet of a multicolored spraige may ``dangle''.  See Figure \ref{fig:dangle} for an example of the dangling action of $B_2$ on $2\spraige_{4,2}$.

\begin{definition}
	For $\sigma \in s\spraige_{n,m}$, denote the orbit of $\sigma$ under the quotient of this action by $[\sigma]$ and call the orbit a \textit{dangling multicolored $(n, m)$-spraige}. We mean \textit{dangling multicolored $n$-spraige} and \textit{dangling multicolored spraige} in the same manner as with other types of multicolored spraiges. The collection of all dangling multicolored spraiges will be denoted by $s\Poset$, with $s\Poset_{n,m}$ denoting the collection of all dangling multicolored $(n, m)$-spraiges and $s\Poset_n$ denoting the collection of all dangling multicolored $n$-spraiges. 
\end{definition}

Moreover, the action of $B_m$ preserves the property of being a multicolored braige or (very) elementary multicolored braige, so we refer to dangling multicolored braiges and dangling (very) elementary multicolored braiges in similar contexts. When $m=1$, $B_m$ is trivial, so we will identify $s\Poset_{n,1}$ with $s\spraige_{n,1}$ for each $n$. We denote the collection of dangling elementary multicolored $n$-braiges by $s\elbraigecpx_n$ and the collection of dangling very elementary multicolored $n$-braiges by $s\velbraigecpx_n$.

\begin{figure}[t]
	\centering
	\begin{tikzpicture}[line width=1pt]
		
		\draw
		(0,-1.5) to [out=270, in=90] (-1,-4.5)
		(1,-1.5) to [out=270, in=90] (0,-4.5)
		(2,-1.5) -- (2,-4.5);
		\draw [line width=4pt, white]
		(-1,-1.5) to [out=270, in=90] (1,-4.5);
		\draw
		(-1,-1.5) to [out=270, in=90] (1,-4.5);
		\draw[red]
		(-1,-4.5) -- (-0.5,-5) -- (0,-4.5);
		\draw[blue]
		(1,-4.5) -- (1.5,-5) -- (2,-4.5);
		\filldraw
		(-1,-1.5) circle (1.5pt)
		(0,-1.5) circle (1.5pt)
		(1,-1.5) circle (1.5pt)  
		(2,-1.5) circle (1.5pt);
		\filldraw[blue]
		(1,-4.5) circle (1.5pt)  
		(2,-4.5) circle (1.5pt)
		(1.5,-5) circle (1.5pt);
		\filldraw[red]
		(-1,-4.5) circle (1.5pt)   
		(0,-4.5) circle (1.5pt)
		(-0.5,-5) circle (1.5pt);
		
		\begin{scope}[xshift=-3.5cm,xscale=-1]
			\draw
			(0,-1.5) to [out=270, in=90] (-1,-4.5)
			(1,-1.5) to [out=270, in=90] (0,-4.5)
			(2,-1.5) -- (2,-4.5);
			\draw [line width=4pt, white]
			(-1,-1.5) to [out=270, in=90] (1,-4.5);
			\draw
			(-1,-1.5) to [out=270, in=90] (1,-4.5);
			\draw[red]
			(-1,-4.5) -- (-0.5,-5) -- (0,-4.5);
			\draw[blue]
			(1,-4.5) -- (1.5,-5) -- (2,-4.5);
			\filldraw
			(-1,-1.5) circle (1.5pt)   
			(0,-1.5) circle (1.5pt)
			(1,-1.5) circle (1.5pt)  
			(2,-1.5) circle (1.5pt);
			\filldraw[red]
			(-1,-4.5) circle (1.5pt)   
			(0,-4.5) circle (1.5pt)
			(-0.5,-5) circle (1.5pt);
			\filldraw[blue]
			(1,-4.5) circle (1.5pt)  
			(2,-4.5) circle (1.5pt)
			(1.5,-5) circle (1.5pt);
		\end{scope}
		
		\begin{scope}[xshift=4.5cm]
			\draw
			(0,-1.5) to [out=270, in=90] (-1,-3)
			(1,-1.5) to [out=270, in=90] (0,-3)
			(2,-1.5) -- (2,-3)
			(1,-3) to [out=270, in=90] (-1,-4.5)
			(2,-3) to [out=270, in=90] (0,-4.5);
			\draw [line width=4pt, white]
			(-1,-1.5) to [out=270, in=90] (1,-3)
			(0,-3) to [out=270, in=90] (2,-4.5)
			(-1,-3) to [out=270, in=90] (1,-4.5);
			\draw
			(-1,-1.5) to [out=270, in=90] (1,-3)
			(0,-3) to [out=270, in=90] (2,-4.5)
			(-1,-3) to [out=270, in=90] (1,-4.5);
			\draw[blue]
			(-1,-4.5) -- (-0.5,-5) -- (0,-4.5);
			\draw[red]
			(1,-4.5) -- (1.5,-5) -- (2,-4.5);
			\filldraw
			(-1,-1.5) circle (1.5pt)   
			(0,-1.5) circle (1.5pt)   
			(1,-1.5) circle (1.5pt)  
			(2,-1.5) circle (1.5pt);
			\filldraw[blue]
			(-1,-4.5) circle (1.5pt)   
			(0,-4.5) circle (1.5pt)   
			(-0.5,-5) circle (1.5pt);
			\filldraw[red]
			(1,-4.5) circle (1.5pt)  
			(2,-4.5) circle (1.5pt)
			(1.5,-5) circle (1.5pt);
		\end{scope}
	\end{tikzpicture}
	\caption{Multicolored dangling.}
	\label{fig:dangle}
\end{figure}
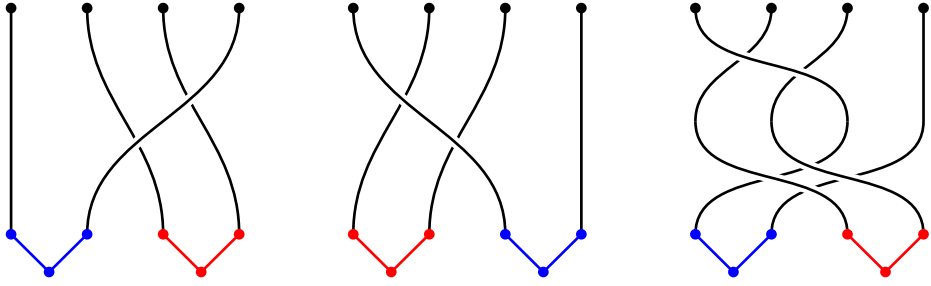

There exists a poset structure on $s\Poset$. Let $x,y \in s\Poset$ with $x = [\sigma_x]$. We say that $x \leq y$ if there exists a multicolored forest $F$ with $m$ leaves such that $y = [\sigma_x \ast (F, id, 1_m)]$. In other words, $x \leq y$ if $y$ is obtained from $x$ via multicolored splittings. Observe that each element can be obtained from itself via the trivial splitting. Moreover, if $x \leq y$ and $y \leq x$, i.e., $x$ and $y$ can be obtained from each other via multicolored splittings, then these splittings must have been trivial, hence $x = y$. Finally, if $x \leq y$ and $y \leq z$ in $s\Poset$, then one can get from $x$ to $y$ and from $y$ to $z$ via splittings, hence we can get from $x$ to $z$ via the concatenation of these splittings so $x \leq z$. Therefore, $\leq$ is a partial order. Also, if $x \in s\Poset_n$ and $y \in s\Poset$ with $x \leq y$ or $y \leq x$, then $y \in s\Poset_n$. Thus two elements are comparable only if they have the same number of heads. 

We will now define a relation $\preceq$ on $s\Poset$ as follows. If $x = [\sigma_x] \in s\Poset$ and $y \in s\Poset$ such that $y = [\sigma_x \ast (F, id, 1_m)]$ for some elementary forest $F$, we write $x \preceq y$. That is, $x \preceq y$ if $y$ is obtained from $x$ via an elementary splitting. $\preceq$ is well defined with respect to dangling. If $x \preceq y$ and $x \neq y$, then we write $x \prec y$. Note that $\prec$ and $\preceq$ are not transitive, however, if $x \preceq z$ and $x \leq y \leq z$ then $x \preceq y$ and $y \preceq z$. 

We will now use the generalized class of diagrams just constructed to discuss the space which will be essential to our proof.

\section{The Stein complex}
\label{sec:def_stein_space}

We denote the simplicial complex with a $k$ simplex for every chain $x_0 < \cdots < x_k$ in $s\Poset_1$, called the \emph{geometric realization} of $s\Poset_1|$, by $|s\Poset_1|$ . A simplex is \emph{elementary} if $x_0 \prec x_k$.

\begin{definition}
	The \textit{Stein complex} $sX_{br}$ is the subcomplex of $|s\Poset_1|$ consisting of all elementary simplices. 
\end{definition}

We will prove $sX_{br}$ is contractible. We will first need to discuss what it means to be an interval in this context.

\begin{definition}
	Let $x, y \in |s\Poset_1|$. We define the closed interval $s[x,y]_{br} \defeq \{z | x \leq z \leq y\}$ and obtain the half open intervals $s(x,y]_{br}$ and $s[x,y)_{br}$ along with the open interval $s(x,y)_{br}$ by restricting the appropriate inequalities. If $x \preceq y$ then we call the interval \textit{elementary}, so $|s[x,y]_{br}|$ is contained in $sX_{br}$. 
\end{definition}

We will denote the unbraided version of $s(x,y)_{br}$ by $s(x,y)$, i.e. we replace the braid with a permutation. This interval is in its own poset, however, that is beyond the intent of this paper. The only thing we will need to know about $s(x,y)$ is that its geometric realization $|s(x,y)|$ is contractible by the following lemma.

\begin{lemma}\cite[Lemma 2.4]{Fluch13}
	\label{lem:sV_interval_contrble}
	For $x < y$ with $x \nprec y$, $|s(x,y)|$ is contractible. 
\end{lemma}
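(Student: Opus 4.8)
The plan is to exploit the product structure of the interval $s[x,y]$ together with the standard behaviour of order complexes under products and joins, reducing the statement to a single-piece computation.

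\emph{Step 1 (the interval is a product).} Since $y$ is obtained from $x$ by one splitting, say $y=[\sigma_x\ast(F,\id,1_\ell)]$ for a multicolored forest $F$ with one tree $T_b$ for each foot $b$ of $\sigma_x$, every $z$ with $x\le z\le y$ is of the form $z=[\sigma_x\ast(F',\id,1_{\ell'})]$ for a ``sub-forest'' $F'$ of $F$ (one that $F$ refines), and by left cancellation such a $z$ is determined by the class of $F'$; moreover a sub-forest of $F$ amounts to choosing, independently for each foot $b$, a subdivision of the $b$-th piece refined by $T_b$. Hence $s[x,y]$ is order-isomorphic to a product $\prod_b L_b$ of finite bounded lattices, where $L_b$ is the poset of subdivisions of the $b$-th piece refined by $T_b$, with least element the trivial subdivision and greatest element $T_b$; feet that $y$ does not refine contribute one-point factors and may be discarded.

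\emph{Step 2 (pass to a join).} Using the standard fact that the proper part $\overline{P\times Q}\defeq(P\times Q)\setminus\{\hat 0,\hat 1\}$ of a product of bounded posets satisfies $|\overline{P\times Q}|\simeq|\overline P|\ast|\overline Q|$, iteration gives
\[
|s(x,y)|=\bigl|\,\overline{\textstyle\prod_b L_b}\,\bigr|\simeq\bigast_{b}\ \bigl|\overline{L_b}\bigr|,\qquad \overline{L_b}\defeq L_b\setminus\{\hat 0_b,\hat 1_b\}.
\]
A topological join with a contractible factor is contractible (it is a cone), and $X\ast\emptyset=X$, so it suffices to exhibit one foot $b_0$ with $|\overline{L_{b_0}}|$ contractible.

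\emph{Step 3 (a non-elementary piece, and the one-piece claim).} By definition $x\preceq y$ holds exactly when $F$ is elementary, i.e.\ when every $T_b$ is an elementary subdivision of its piece; since $x<y$ and $x\nprec y$, some $T_{b_0}$ fails to be elementary. It therefore remains to prove: if $\beta$ is a non-elementary subdivision of a single piece, then $|\overline{L_\beta}|$ is contractible. When $s=1$ this is immediate, since $L_\beta$ is the distributive lattice of rooted subtrees of the binary tree $\beta$, every non-empty rooted subtree contains the root caret, and $\beta$ has at least two carets, so the single-caret subtree is a \emph{least} element of $\overline{L_\beta}$ and $|\overline{L_\beta}|$ is a cone. \textbf{The case $s\ge 2$ is the real obstacle:} because the colored cross relation allows a non-elementary $\beta$ to lie above several incomparable single cuts, there need be no cone point in $\overline{L_\beta}$. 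One resolves this by a closure-operator argument in the spirit of Quillen's fibre lemma: produce an order-preserving self-map $r$ of $\overline{L_\beta}$ with $r\ge\id$ (or $r\le\id$) whose image has a greatest (respectively least) element --- intuitively, push every $z<\beta$ past a fixed redundant cut of $\beta$ --- so that $|\overline{L_\beta}|\simeq|r(\overline{L_\beta})|$ is contractible. Feeding this back into Step 2 exhibits $|s(x,y)|$ as a join with a contractible factor, hence contractible.
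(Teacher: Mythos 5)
The paper itself does not prove this statement: it is quoted from \cite[Lemma 2.4]{Fluch13} and used as a black box, so there is no in-paper argument to compare against and your proof has to stand entirely on its own. Your overall strategy---decompose $s[x,y]$ as a product of one-piece posets $L_b$ over the feet of $x$, pass to proper parts, and reduce to a single non-elementary piece---is reasonable and close in spirit to how such intervals are analyzed in \cite{Fluch13}. Two things need attention, though. The quoted ``standard fact'' in Step 2 is misstated: for bounded posets the proper part of a product satisfies $\overline{P\times Q}\simeq \overline{P}\ast\overline{Q}\ast S^0$, with an extra suspension for each additional factor (test it on two copies of the two-element chain: the left-hand side is $S^0$, your formula gives $\emptyset$). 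Your deduction survives, since a join with a nonempty contractible factor is still contractible, but the formula as written is false. The serious problem is Step 3: the one-piece claim for $s\ge 2$ is exactly where the entire content of the lemma sits, and you do not prove it---you only gesture at ``a closure-operator argument pushing past a fixed redundant cut.''

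That gesture is not just incomplete; whether the one-piece claim is even true depends on using non-elementarity in precisely the right way, which your sketch does not engage with. For $s=2$ let $\beta$ be the $2\times 2$ grid (one cut in each direction). Its poset of proper nontrivial coarsenings has six elements---the two single cuts, each lying below two three-piece subdivisions---and is disconnected, homotopy equivalent to $S^0$, so no order-preserving map can contract it. The lemma is saved only because the grid counts as an \emph{elementary} splitting in the sense needed here (each piece is halved at most once in each direction, equivalently no color repeats along a root-to-leaf path, which is the notion used in \cite{Fluch13} and is presumably what the present paper's ``at most one caret per color per head'' intends), so this case is excluded by $x\nprec y$; any proof of your Step 3 must therefore exploit a repeated color along a single branch, and ``push past a fixed redundant cut'' does not visibly do so. The standard way to finish is the Brown--Stein conical contraction: for non-elementary $\beta$ let $e$ be the maximal elementary coarsening of $\beta$ (common refinements of elementary coarsenings are again elementary and still coarsened by $\beta$), note $\hat 0<e<\beta$ by non-elementarity, and send each $z$ in the proper part to the maximal elementary element $r(z)\le z$; then $r$ is monotone with $\hat 0<r(z)\le e$, and $\mathrm{id}\ge r\le\mathrm{const}_e$ contracts $|\overline{L_\beta}|$ (one can in fact run this argument directly on $s(x,y)$ and skip Steps 1--2 altogether). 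Until such a construction is carried out and checked, your proof has a genuine gap at its crux.
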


We wish to use Lemma \ref{lem:sV_interval_contrble} to prove that $|s(x,y)_{br}|$ is also contractible.

\begin{lemma}
	\label{lem:multicolored_int_contra}
	For $x \leq y$ where $x,y \in |s\Poset_1|$ and $x \npreceq y$, $|s(x,y)_{br}|$ is contractible.
\end{lemma}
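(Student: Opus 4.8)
The plan is to compare $|s(x,y)_{br}|$ with the already-understood complex $|s(x,y)|$ by means of a ``forget the braid'' map together with Quillen's poset fiber lemma. Replacing the braid in a representative by its underlying permutation sends a braided paired multicolored forest diagram to a paired multicolored forest diagram; by the same bookkeeping used to establish that $\ast$ is well defined (where the braided cross relation was seen to project onto the cross relation, reduction onto reduction, and braided dangling onto ordinary dangling), this descends to an order-preserving map
\[
\pi\colon s(x,y)_{br} \longrightarrow s(x,y).
\]
First I would check that $\pi$ is genuinely a map of posets and that the hypothesis $x\npreceq y$ forces the corresponding statement for the underlying unbraided data, so that Lemma \ref{lem:sV_interval_contrble} applies and $|s(x,y)|$ is contractible. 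It then suffices to show that $\pi$ induces a homotopy equivalence of geometric realizations.

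For that I would invoke Quillen's Theorem~A for posets: $|\pi|$ is a homotopy equivalence provided that for every $\bar z\in s(x,y)$ the fiber subposet $\pi^{-1}\big(s(x,y)_{\ge\bar z}\big)$, where $s(x,y)_{\ge\bar z}=\{w\in s(x,y):w\ge\bar z\}$, has contractible geometric realization. The claim I would prove is that this fiber has a \emph{minimum}, hence is a cone and contractible. The reason one expects a minimum is that every element of $s(x,y)_{br}$ above $x$ is obtained from $x$ purely by splitting, so its braid part is inherited from $x$ with no new braiding: the forest data and the braid data decouple throughout the interval. Consequently $\bar z$ has a canonical braided lift $\tilde z=[\sigma_x\ast(F',\id,1)]$, where $\sigma_x$ represents $x$ and $F'$ is the splitting forest exhibiting $\bar x\le\bar z$; it lies strictly between $x$ and $y$ because $\bar x<\bar z<\bar y$. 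Given any $z'$ in the fiber, $\pi(z')\ge\bar z$ means $\pi(z')$ is obtained from $\bar z$ by a further splitting, and lifting that splitting — which involves only forests, no braids — exhibits $\tilde z\le z'$. So $\tilde z$ is the minimum.

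The step I expect to be the main obstacle is making ``the splitting forest $F'$ exhibiting $\bar x\le\bar z$'' and hence ``the canonical lift $\tilde z$'' genuinely well defined: because of the (braided) cross relation, a dangling spraige in the interval may be represented by several distinct splitting forests, and one has to verify that all of them produce the \emph{same} braided lift, and dually that $\tilde z$ is minimal in the whole fiber and not merely among lifts sharing a chosen $F'$. This is precisely the additional difficulty that was absent in the $s=1$ case in \cite{Bux16}, and I would expect to dispatch it in the same spirit as the well-definedness argument for $\ast$: apply the braided cross relation ``upside down'' to cancel the auxiliary braid it introduces, so that the ambiguity in the choice of $F'$ disappears after passing to equivalence classes. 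If this comparison is clean enough, it actually shows that $\pi$ is an isomorphism of posets and Quillen's lemma is unnecessary; but routing the argument through the fiber lemma is the more robust way to write it down, since it only needs the fiber to be contractible rather than trivial.
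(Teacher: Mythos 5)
Your route is at bottom the same as the paper's: compare $s(x,y)_{br}$ with the unbraided interval $s(x,y)$ via the forget-the-braid map and then quote Lemma \ref{lem:sV_interval_contrble}. The paper shows directly that this map is a bijection of posets (hence an isomorphism, so the realizations are literally the same complex); your Quillen--Theorem-A packaging, with fibers contractible because they have a minimum, is a weaker-but-sufficient variant of the same comparison, as you acknowledge in your last sentence.

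However, the step you defer is the entire mathematical content of the lemma, and as written it is a gap. Everything in your argument (well-definedness of the lift $\tilde z$, the inequality $\tilde z\le z'$, and even $\tilde z\le y$) reduces to a single statement: if $[x\ast(F,\id,1_n)]=[x\ast(F',\id,1_{n'})]$ in the unbraided poset, then $[\sigma_x\ast(F,\id,1_n)]_{br}=[\sigma_x\ast(F',\id,1_{n'})]_{br}$ in the braided one. Your proposed fix --- cancelling a braided cross relation ``upside down'' as in the well-definedness of $\ast$ --- is not enough on its own, because two such splitting forests are in general related by an arbitrary finite sequence of expansions, reductions and cross relations, not by a single cross-relation pair. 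The paper's mechanism is the one to use: by the Left Cancellation observation the hypothesis gives $[(F,\id,1_n)]=[(F',\id,1_{n'})]$, i.e.\ $(F,\sigma,F')$ is trivial for some permutation $\sigma$; replaying the witnessing sequence of unbraided moves in the braided setting shows $(F,\beta,F')$ is trivial for some braid $\beta$, i.e.\ $(F,\id,1_n)\ast(1_n,\beta,1_n)=(F',\id,1_n)$, and $(1_n,\beta,1_n)$ is precisely a dangling move on the feet, so the two braided classes agree. With this lemma in hand your fiber argument goes through (or, more simply, $\pi$ becomes an order-preserving bijection with order-preserving inverse, which is the paper's shortcut, making Quillen's lemma unnecessary). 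Do also record the small check you mention, that $x\npreceq y$ yields the hypothesis $x\nprec y$ needed to invoke Lemma \ref{lem:sV_interval_contrble}.
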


\begin{proof}
	We will show that the poset $s(x,y)_{br}$ is isomorphic to $s(x,y)$, hence they have the same geometric realizations providing the desired result. 
	
	Recall that the equivalence relation for $s(x,y)_{br}$ is braiding the feet or ``mod dangling the feet'' and the equivalence relation for $s(x,y)$ is permuting the feet in a similar manner. For this proof we add the subscript $br$ to an equivalence class in $s(x,y)_{br}$ to distinguish it from an equivalence class in $s(x,y)$. Consider the function $\phi: s(x,y)_{br} \to s(x,y)$ which sends $z = [x \ast (F, \id, 1_n)]_{br}$ where $F$ is an elementary forest to $\phi(z) \defeq [x \ast (F, \id, 1_n)]$. Note while we write $x$ as part of $\phi(z)$ in the previous sentence we mean the permutation version of $x$ as $\phi(z)$ is in $s(x,y)$. Due to the equivalence relation on $s\Poset_1$ we see that if two elements of $s(x,y)_{br}$ are the same up to dangling then their images will be the same up to permutation, hence the function is well defined. Moreover, we get surjectivity by construction as every element of $s(x,y)_{br}$ was constructed by braiding an element of $s(x,y)$. 
	
	We must prove the map $\phi$ is injective to finish the proof. Consider two elements, say $z = [x \ast (F, \id, 1_n)]_{br}$ and $z' = [x \ast (F', \id, 1_{n'})]_{br}$.  Assuming $\phi(z) = \phi(z')$ we have $[x \ast (F, \id, 1_n)] = [x \ast (F', \id, 1_{n'})]$. By left cancellation, we have $[(F, \id, 1_n)] = [(F', \id, 1_{n'})]$. Therefore, $[(F, \sigma, F')]$ for some $\sigma \in S_n$ is trivial, i.e. it is the identity equivalence class. Since $[(F, \sigma, F')]$ is the identity, we know that there exists a sequence of expansions, reductions, and applications of the cross relation that turn $(F,\id,F)$ into $(F,\sigma,F')$. We can do the corresponding relations in the braided version to $[(F,\id,F)]$ to obtain $[(F,\beta,F')]_{br}$ for some braid $\beta$. That is, there exists a braid $\beta$ such that $[(F, \beta, F')]_{br}$ is the identity in the braided world. Therefore $[(F,\id, 1_n)]_{br} = [(F', \id, 1_n)]_{br}$, hence we have injectivity of $\phi$.	
	
	Therefore, we have $s(x,y)_{br}$ is isomorphic to $s(x,y)$. Hence $|s(x,y)_{br}|$ is contractible.
\end{proof}
\begin{corollary}
	\label{cor:sX_br_contractible}
	$sX_{br}$ is contractible.
\end{corollary}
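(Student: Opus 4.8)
The plan is to deduce contractibility of $sX_{br}$ from the standard Morse-theoretic / poset-fiber argument that is used in this family of problems (as in \cite{Fluch13} and \cite{Bux16}), feeding in Lemma~\ref{lem:multicolored_int_contra} at the one place where the braided structure differs from the unbraided one. Concretely, I would compare $sX_{br}$ with the full geometric realization $|s\Poset_1|$. The poset $s\Poset_1$ is directed: any finite collection of dangling multicolored $1$-spraiges has a common upper bound obtained by taking a common refinement of the underlying forests (one can always split further until all the given forests are subdivided, and the resulting spraige dominates each of them in the order $\le$). Hence $|s\Poset_1|$ is contractible, being the realization of a directed poset.

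Next I would pass from $|s\Poset_1|$ to the subcomplex $sX_{br}$ of elementary simplices by a sequence of elementary collapses, organized by a Morse function. The natural Morse function is $z\mapsto f(\sigma_z)$, the number of feet of (a representative of) $z$; since comparable elements have the order given by splitting, this is strictly increasing along the order, so it is a well-defined poset Morse function on $s\Poset_1$. For a non-elementary simplex $x_0<\cdots<x_k$ one removes it together with simplices obtained by inserting vertices in the "gaps" that witness non-elementarity; the link data controlling whether this process is a sequence of collapses is governed precisely by the open intervals $s(x,y)_{br}$ with $x\npreceq y$, and Lemma~\ref{lem:multicolored_int_contra} says each such interval has contractible realization. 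This is the standard argument that the inclusion $sX_{br}\hookrightarrow|s\Poset_1|$ is a homotopy equivalence: one shows that for each $x\le y$ with $x\npreceq y$ the descending link / relative link is contractible, so removing the corresponding non-elementary simplices does not change the homotopy type. Combining the two steps, $sX_{br}$ is contractible.

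Two remarks on where the work really sits. First, the directedness of $s\Poset_1$ needs the observation that a common refinement of finitely many multicolored forests exists and that the braids are irrelevant at the bottom level (there is only one head, $B_1$ is trivial, so $s\Poset_{n,1}=s\spraige_{n,1}$ as already noted); this is routine. Second — and this is the crux — the collapsing argument requires that the "interval" pieces glued in along the way are contractible, and the braided analogue of the unbraided fact (Lemma~\ref{lem:sV_interval_contrble}) is exactly Lemma~\ref{lem:multicolored_int_contra}, whose proof already identified $s(x,y)_{br}\cong s(x,y)$ as posets. So the main obstacle has effectively been cleared in the preceding lemma; what remains is to check that the Morse function argument, verbatim from \cite{Fluch13} in the unbraided $sV$ case, goes through with "permutation" replaced by "braid" everywhere — which it does, because the only homotopy-theoretic input is the contractibility of the open intervals, now supplied in the braided setting.

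\begin{proof}
The poset $s\Poset_1$ is directed: given finitely many dangling multicolored $1$-spraiges $[\sigma_1],\dots,[\sigma_k]$, choose a multicolored forest refining the feet-forests of all the $\sigma_i$ simultaneously; the corresponding splitting produces a common upper bound. Hence $|s\Poset_1|$ is contractible.

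Now apply the poset Morse lemma with height function $h\colon s\Poset_1\to\N$, $h(z)=f(\sigma_z)$, the number of feet; $h$ is strictly increasing with respect to $<$. Exactly as in \cite[proof of Theorem~2.7 / Lemma~2.4ff.]{Fluch13} for the group $sV$, building up $|s\Poset_1|$ from $sX_{br}$ by attaching non-elementary simplices in order of increasing height is a sequence of steps each of which is, up to homotopy, a coning off of a subcomplex whose homotopy type is that of an open interval realization $|s(x,y)_{br}|$ with $x\le y$ and $x\npreceq y$. By Lemma~\ref{lem:multicolored_int_contra} every such realization is contractible, so each attachment does not change the homotopy type. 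Therefore the inclusion $sX_{br}\hookrightarrow|s\Poset_1|$ is a homotopy equivalence, and since $|s\Poset_1|$ is contractible, so is $sX_{br}$.
\end{proof}
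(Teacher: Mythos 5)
Your proposal is correct and takes essentially the same route as the paper: contractibility of $|s\Poset_1|$ from directedness, then building up from $sX_{br}$ to $|s\Poset_1|$ by attaching the non-elementary intervals in order of increasing $f(y)-f(x)$, with Lemma~\ref{lem:multicolored_int_contra} as the key input. The only cosmetic difference is that the paper identifies the attaching locus $|s[x,y)_{br}|\cup|s(x,y]_{br}|$ precisely as the \emph{suspension} of $|s(x,y)_{br}|$ (contractible because the open interval is), whereas you describe it as having the homotopy type of the open interval itself --- harmless here, since both are contractible.
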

\begin{proof}
	Observe that $|s\Poset_1|$ is contractible since directed posets are contractible. We can build from $sX_{br}$ to $|s\Poset_1|$ by attaching new subcomplexes, as has been done before for the Stein complexes in \cite{Bux16, Fluch13}, and we claim that this never changes the homotopy type, so $sX_{br}$ is contractible. 
	
	Given a closed interval $s[x,y]_{br}$, define $r(s[x,y]_{br}) := f(y) - f(x)$. We will attach closed intervals which are directed subposets, hence contractible subcomplexes, to $|s[x,y]_{br}|$ for $x \npreceq y$ to $sX_{br}$ in increasing order of $r$-value so that we attach it along $|s[x,y)_{br}|\cup |s(x,y]_{br}|$. That is, we identify the part of $|s[x,y]_{br}|$ which is already in $sX_{br}$ with itself. But this is the suspension of $|s(x,y)_{br}|$ because we have $x$ and $y$ as lower and upper bounds of $s(x,y)_{br}$ respectively, so attaching the intervals in this way makes two cone points. By Lemma \ref{lem:multicolored_int_contra}, $|s(x,y)_{br}|$ is contractible and the suspension of a contractible space is contractible.
	
	We conclude that attaching $|s[x,y]_{br}|$ to $|s\Poset_1|$ does not change the homotopy type, thus $|s\Poset_1|$ being contractible implies $sX_{br}$ is contractible.
\end{proof}

We will now define a filtration of the Stein complex. Recall the height of a vertex, denoted by $f:s\spraige \to \N$, is the number of feet it has, so its height increases when you add a split. 

\begin{definition}
For $n \geq 1$, the sublevel set $sX_{br}^{\leq n}$ is the full subcomplex of the Stein complex $sX_{br}$ spanned by all vertices of height at most $n$.	Moreover, $sX_{br}^{<n}$ is the full subcomplex of $sX_{br}$ spanned by the vertices $x$ such that its height is less than $n$.
\end{definition} 

This defines a natural filtration $(sX_{br}^{\leq n})_{n \in \N}$ of $sX_{br}$. We will show that the orbit space of the action of $s\Vbr$ on $sX_{br}^{\leq n}$ is compact, i.e. there are only finitely many orbits of simplices in the sublevel set $sX_{br}^{\leq n}$. 

\begin{lemma}[Cocompactness]
	\label{lem:sX_{br}_cocompact}
	For each $n \ge 1$ the sublevel set $sX_{br}^{\leq n}$ is finite modulo $s\Vbr$.
\end{lemma}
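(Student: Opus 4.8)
The plan is to show that $s\Vbr$ acts on $sX_{br}^{\le n}$ with only finitely many orbits of simplices, by reducing the count to simplices lying above a fixed finite set of vertices. First I would pin down the action: identifying $s\Vbr$ with $s\spraige_{1,1}$, an element $g$ acts on a vertex $x=[\sigma]\in s\Poset_1$, where $\sigma\in s\spraige_{1,m}$, by $g\cdot[\sigma]=[g\ast\sigma]$. This is well defined with respect to the dangling relation and the equivalence relation on spraiges, and since $g\ast\sigma\in s\spraige_{1,m}$ it preserves the height $f$, hence also the order $\le$, the relation $\preceq$, and each sublevel set; so it is a simplicial action on $sX_{br}^{\le n}$.

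The key observation is that $s\Vbr$ acts transitively on the set of vertices of any fixed height $m$. Given two such vertices $x=[\sigma_x]$ and $y=[\sigma_y]$ with $\sigma_x,\sigma_y\in s\spraige_{1,m}$, use that $s\spraige$ is a groupoid, so $\sigma_x$ has an inverse $\sigma_x^{-1}\in s\spraige_{m,1}$ with $\sigma_x^{-1}\ast\sigma_x=1_m$, and put $g\defeq\sigma_y\ast\sigma_x^{-1}\in s\spraige_{1,1}=s\Vbr$. Then $g\cdot[\sigma_x]=[\sigma_y\ast\sigma_x^{-1}\ast\sigma_x]=[\sigma_y\ast 1_m]=[\sigma_y]=y$. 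Since vertices of $sX_{br}^{\le n}$ have height between $1$ and $n$, there are at most $n$ orbits of vertices; fix representatives $v_1,\dots,v_n$ with $f(v_m)=m$.

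Finally I would pass from vertices to simplices. A $k$-simplex of $sX_{br}^{\le n}$ is an elementary chain $x_0<x_1<\cdots<x_k$ in $s\Poset_1$ with $f(x_k)\le n$, and because $f$ strictly increases along $<$ such a chain has $k\le n-1$. By transitivity I may translate by a suitable $g\in s\Vbr$ so that $x_0=v_m$ with $m=f(x_0)$; each $x_i$ then satisfies $v_m\le x_i$ and $f(x_i)\le n$, so by the definition of $\le$ in $s\Poset$ it has the form $[v_m\ast(F,\id,1_\ell)]$ for a multicolored forest $F$ with $m$ roots and $\ell=f(x_i)\le n$ leaves. A forest with $m$ roots and $\ell$ leaves has exactly $\ell-m$ carets, so there are only finitely many underlying shapes and at most $s^{\ell-m}$ colorings of the carets; summing over $\ell=m,\dots,n$ shows $\{z\in s\Poset_1 : v_m\le z,\ f(z)\le n\}$ is finite. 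Hence only finitely many chains have bottom vertex $v_m$, so $sX_{br}^{\le n}$ has only finitely many $s\Vbr$-orbits of simplices. There is no real obstacle here: the one point that needs care is checking that left multiplication by $s\Vbr$ is well defined on dangling classes and preserves $f$, $\le$ and $\preceq$ (so that it genuinely acts on the sublevel complex), after which transitivity on each height level is immediate from the groupoid structure and the remaining count is elementary.
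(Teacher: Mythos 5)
Your proof is correct and follows essentially the same route as the paper: transitivity of $s\Vbr$ on vertices of each fixed height (giving one orbit per height $1\le m\le n$), followed by the observation that only finitely many simplices of $sX_{br}^{\le n}$ have a fixed representative vertex as their minimal vertex. You simply spell out in more detail the points the paper leaves implicit, namely well-definedness of the action and the finite count of forests with $m$ roots and at most $n$ leaves.
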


\begin{proof}
	Note that for $k \geq 1$, $s\Vbr$ acts on the left transitively on the set of $s$-colored $(1,k)$-spraiges. Thus there exists for each $1 \leq k \leq n$ one orbit of vertices $x$ in $sX_{br}^{\leq n}$ with $f(x) = k$. Given a vertex $x$ with $f(x) = k$, since there are only finitely many ways to get from $x$ to a vertex with $n$ feet,  there exists only finitely many simplices $\sigma_1, \dots, \sigma_r$ in the sublevel set $sX_{br}^{\leq n}$ that have $x$ as the vertex of the simplex with the minimum $f$ value of the simplex. If $\sigma$ is a simplex in $sX_{br}^{\leq n}$ such that its vertex with the minimum height is in the same orbit as $x$, then $\sigma$ must be in the same orbit as $\sigma_i$ for some $1 \leq i \leq r$. That is, when the group acts on $\sigma$ it is also acting on $x$ which in turn shows $\sigma$ is in the orbit of one of the previously mentioned simplices. It follows that there can only be finitely many orbits of simplices in the sublevel set $sX_{br}^{\leq n}$.
\end{proof}

\begin{lemma}[Vertex Stabilizers]
	\label{lem:sX_br_vertex_stab}
	Let $x$ be a vertex in $sX_{br}$ with $f(x) = n$. The stabilizer $\Stab_{s\Vbr}(x)$ is isomorphic to $B_n$.
\end{lemma}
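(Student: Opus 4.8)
The plan is to write down an explicit isomorphism $B_n\to\Stab_{s\Vbr}(x)$ built from the groupoid structure of $s\spraige$. First I would fix notation. Since $f(x)=n$, the vertex $x$ is a dangling class $x=[\sigma]$ for some representative $\sigma\in s\spraige_{1,n}$, where $s\Poset_1$ is $s\spraige$ modulo the dangling action of the braid groups on the feet. The group $s\Vbr=s\spraige_{1,1}$ acts on $s\Poset_1$ by left multiplication. Unwinding the definition of dangling, an element $g\in s\Vbr$ fixes $x$ precisely when $g\ast\sigma$ and $\sigma$ differ by the dangling action, that is, when $g\ast\sigma=\sigma\ast(1_n,\beta,1_n)$ in $s\spraige_{1,n}$ for some braid $\beta\in B_n$ (identifying $B_n$ with the subgroup $\{(1_n,\beta,1_n)\}$ of $s\spraige_{n,n}$ as in Section~\ref{sec:dangling}).

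Next I would construct the map. By the groupoid remark, $\sigma$ has a two-sided inverse $\sigma^{-1}\in s\spraige_{n,1}$ with $\sigma\ast\sigma^{-1}=1_1$ and $\sigma^{-1}\ast\sigma=1_n$. Define $\psi\colon B_n\to s\Vbr$ by $\psi(\beta)=\sigma\ast(1_n,\beta,1_n)\ast\sigma^{-1}$. On one hand, $\psi(\beta)\ast\sigma=\sigma\ast(1_n,\beta,1_n)\ast(\sigma^{-1}\ast\sigma)=\sigma\ast(1_n,\beta,1_n)$, which is dangling-equivalent to $\sigma$, so $\psi(\beta)\in\Stab_{s\Vbr}(x)$. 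On the other hand, if $g\in\Stab_{s\Vbr}(x)$ with $g\ast\sigma=\sigma\ast(1_n,\beta,1_n)$, then right-multiplying by $\sigma^{-1}$ and using $\sigma\ast\sigma^{-1}=1_1$ gives $g=\sigma\ast(1_n,\beta,1_n)\ast\sigma^{-1}=\psi(\beta)$. Hence $\image\psi=\Stab_{s\Vbr}(x)$, i.e.\ $\Stab_{s\Vbr}(x)=\sigma\ast B_n\ast\sigma^{-1}$.

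It then remains to see that $\psi$ is an injective homomorphism. It factors as the inclusion $B_n\hookrightarrow s\spraige_{n,n}$, $\beta\mapsto(1_n,\beta,1_n)$ (which is injective, since this is how $B_n$ was identified with a subgroup of $s\spraige_{n,n}$ in Section~\ref{sec:dangling}), followed by the conjugation map $c_\sigma\colon s\spraige_{n,n}\to s\spraige_{1,1}$, $d\mapsto\sigma\ast d\ast\sigma^{-1}$. The relation $\sigma^{-1}\ast\sigma=1_n$ makes $c_\sigma$ a homomorphism between these isotropy groups of the groupoid $s\spraige$, and it is a bijection with inverse $e\mapsto\sigma^{-1}\ast e\ast\sigma$. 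Therefore $\psi$ is an injective homomorphism onto $\Stab_{s\Vbr}(x)$, so $\Stab_{s\Vbr}(x)\cong B_n$. There is no serious obstacle here; the only care needed is to keep the two equivalence relations apart --- spraige moves (reduction, expansion, braided cross relation) versus the dangling action --- and to use that no braid information is lost under spraige moves, which is exactly the content of the identification of $B_n$ with a subgroup of $s\spraige_{n,n}$. The choice of representative $\sigma$ is harmless: a different representative differs from $\sigma$ by a dangling braid and so only replaces $\psi$ by its composite with an inner automorphism of $B_n$, yielding the same stabilizer subgroup.
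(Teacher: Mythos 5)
Your proposal is correct and follows essentially the same route as the paper: both identify $B_n$ with the subgroup $\{(1_n,\beta,1_n)\}$ of $s\spraige_{n,n}$ and realize the stabilizer of $x=[\sigma]$ as the conjugate $\sigma\ast B_n\ast\sigma^{-1}$, the paper simply writing the isomorphism in the opposite direction, $g\mapsto\sigma^{-1}\ast g\ast\sigma$. Your write-up just spells out a few details (unwinding the dangling equivalence, the homomorphism property, independence of the representative) that the paper leaves implicit.
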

\begin{proof}
	Firstly, identify $B_n$ with its image under the inclusion $B_n \to s\spraige_{n,n}$ via sending $b$ to $(1_n, b, 1_n)$. Let $g \in \Stab_{s\Vbr}(x)$ and fix $\sigma \in s\spraige_{1,n}$ with $x = [\sigma]$. By the definition of the stabilizer of a group we have $gx = x$, i.e. $[g \ast \sigma] = [\sigma]$. Thus $[\sigma^{-1} \ast g \ast \sigma] = [1_n]$, and so due to coset equality, $\sigma^{-1} \ast g \ast \sigma \in B_n$. Now define a map \\
	
	\centering
	\begin{tabular}{ccc}
		$\psi: \Stab_{s\Vbr}(x)$ & $\to$ & $B_n$\\
		$g$ & $\to$ & $\sigma^{-1} \ast g \ast \sigma$\\
	\end{tabular}	
	
	This map has an inverse $b \to \sigma \ast b \ast \sigma^{-1}$. Thus $\psi$ is an isomorphism. 
\end{proof}

\begin{lemma}[Simplex Stabilizers]
	\label{lem:sX_br_cell_stab}
	Let $\tau = \{x_0 < x_1 < \cdots < x_k\}$ be a $k$-simplex in $sX_{br}$, $x_0 = [\sigma]$, and $f(x_0) = n$. Then the stabilizer of $\tau$ is of type $\F_\infty$.
\end{lemma}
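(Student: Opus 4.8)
The plan is to identify $\Stab_{s\Vbr}(\tau)$ with a finite-index subgroup of a braid group and then invoke the standard fact that finite-index subgroups of groups of type $\F_\infty$ are again of type $\F_\infty$. The first step is to reduce to vertex stabilizers. Since $s\Vbr$ acts on $|s\Poset_1|$ preserving the partial order (left multiplication by $g$ sends a splitting $y=[\sigma_x\ast(F,\id,1_m)]$ of $x$ to the splitting $[g\ast\sigma_x\ast(F,\id,1_m)]$ of $gx$), any element fixing $\tau$ setwise induces an order-preserving bijection of the chain $x_0<\cdots<x_k$ and hence fixes every $x_i$. Thus $\Stab_{s\Vbr}(\tau)=\bigcap_{i=0}^k\Stab_{s\Vbr}(x_i)$; in particular $\Stab_{s\Vbr}(\tau)\le\Stab_{s\Vbr}(x_0)$, which by Lemma~\ref{lem:sX_br_vertex_stab} is isomorphic to $B_n$ via $\psi\colon g\mapsto\sigma^{-1}\ast g\ast\sigma$, with inverse $b\mapsto\sigma\ast b\ast\sigma^{-1}$ (writing $b$ also for $(1_n,b,1_n)$).

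The second step is to show that the pure braid group lands inside the stabilizer, i.e.\ $\psi^{-1}(P_n)\le\Stab_{s\Vbr}(\tau)$, where $P_n\le B_n$ denotes the pure braid group. Write the chain as $x_i=[\sigma\ast(F_i,\id,1_{n_i})]$ for nested elementary multicolored forests $1_n=F_0\subseteq F_1\subseteq\cdots\subseteq F_k$, each with $n$ roots. For $b\in P_n$ and $g=\psi^{-1}(b)$, using $\sigma^{-1}\ast\sigma=1_n$ gives $g\ast\sigma\ast(F_i,\id,1_{n_i})=\sigma\ast(1_n,b,1_n)\ast(F_i,\id,1_{n_i})$. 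Combing the splitting forest $F_i$ upward past the braid $b$ via the ``doubling strands'' moves of Figure~\ref{fig:reduction_moves} rewrites $(1_n,b,1_n)\ast(F_i,\id,1_{n_i})$ as $(F_i',b',1_{n_i})$, where $F_i'$ is $F_i$ with its head-subtrees permuted according to the underlying permutation $\rho_b$, and $b'\in B_{n_i}$. Since $b$ is pure, $\rho_b=\id$ and hence $F_i'=F_i$; therefore $g\cdot x_i=[\sigma\ast(F_i,\id,1_{n_i})\ast(1_{n_i},b',1_{n_i})]=[\sigma\ast(F_i,\id,1_{n_i})]=x_i$, the middle equality holding because passing to dangling multicolored spraiges absorbs right multiplication by a braid on the feet. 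So $g$ fixes each $x_i$ and lies in $\Stab_{s\Vbr}(\tau)$.

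Combining the two containments, $\psi^{-1}(P_n)\le\Stab_{s\Vbr}(\tau)\le\Stab_{s\Vbr}(x_0)\cong B_n$, so $[\Stab_{s\Vbr}(x_0):\Stab_{s\Vbr}(\tau)]\le[B_n:P_n]=n!<\infty$. Thus $\Stab_{s\Vbr}(\tau)$ is a finite-index subgroup of a group isomorphic to $B_n$; since $B_n$ is of type $\F_\infty$ and finite-index subgroups of groups of type $\F_\infty$ are of type $\F_\infty$, the lemma follows. (One could in fact identify $\Stab_{s\Vbr}(\tau)$ with the $\psi$-preimage of the group of colour- and shape-preserving symmetries of $F_k$ inside $S_n$, but only the finiteness of the index is needed.)

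The one genuinely substantive point is the second step: checking precisely that a pure braid, once combed through the nested splitting forests $F_i$, returns each $F_i$ unchanged, so that the only residual effect on the diagram is a braid on the feet which then evaporates under the dangling equivalence. The order-preservation argument reducing $\Stab_{s\Vbr}(\tau)$ to an intersection of vertex stabilizers, the index computation, and the inheritance of type $\F_\infty$ by finite-index subgroups are all routine.
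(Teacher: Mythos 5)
Your proposal is correct and follows essentially the same route as the paper: reduce the setwise stabilizer of $\tau$ to the intersection of the vertex stabilizers inside $\Stab_{s\Vbr}(x_0)\cong B_n$, show the pure braid group $PB_n$ (combed through the nested elementary forests, with the residual braid on the feet absorbed by dangling) fixes every vertex of $\tau$, and conclude by finite index together with the fact that finite-index subgroups of $\F_\infty$ groups are $\F_\infty$. The only cosmetic difference is that the paper cites the subgroups $B_n^J$ of \cite{Bux16} to see that a pure braid fixes $[\lambda]$, whereas you verify the combing computation directly.
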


\begin{proof}
	For all $g \in s\Vbr$, $g \in \Stab_{s\Vbr}(\tau)$ if and only if $g$ stabilizes each $x_i$ for $0 \leq i \leq k$. Thus $\Stab_{s\Vbr}(\tau)$ is a subgroup of $\Stab_{s\Vbr}(x_0) \cong B_n$. 
	
	We now wish to prove that $PB_n$, the pure braid group on $n$ strands, stabilizes the simplex $\tau$. Let $y = [\sigma \ast \lambda]$ where $\lambda = (F,\id, 1_p)$ and $F$ is an elementary forest with $n$ roots and $p$ feet. Define $b_g \defeq \sigma^{-1} \ast g \ast \sigma \in B_n$ and specialize $b_g$ to be a pure braid, so $b_g \in PB_n$. We know that $PB_n = ker(B_n \to S_n)$ so pure braids are mapped to the identity permutation in $S_n$ being each strand always ends back where it started, hence $b_g \in B_n^J$ for all $J \subseteq \{1, 2, \dots, n\}$ where $J$ is the subset of indices that are do not move, where $B_n^J$ is as defined in \cite{Bux16}. Thus $[b_g \ast \lambda] = [\lambda]$ and by left multiplication of $\sigma$ on both sides we have $[\sigma \ast b_g \ast \lambda] = [\sigma \ast \lambda]$. By the definition of $b_g$ we get $[\sigma \ast (\sigma^{-1} \ast g \ast \sigma) \ast \lambda] = [\sigma \ast \lambda]$. After simplification we get $[g \ast \sigma \ast \lambda] = [\sigma \ast \lambda]$, hence $gy = y$. Therefore $g$ stabilizes $y$. As $x < y$ was arbitrary, we have shown that $g$ stabilizes $x_i$ for all $1 \leq i \leq k$. Hence $g$ stabilizes $\tau$, i.e.  $\psi^{-1}(b_g) \in \Stab_{s\Vbr}(\tau)$. Therefore, the simplex stabilizer contains $PB_n$, i.e. $\psi^{-1}(PB_n)  < \Stab_{s\Vbr}(\tau) < \Stab_{s\Vbr}(x_0) \cong \psi^{-1}(B_n)$. Since $S_n$ is finite, then $PB_n$ has finite index in $B_n$. Hence $\Stab_{s\Vbr}(\tau)$ has finite index in $\Stab_{s\Vbr}(x_0)$. Moreover, finite index subgroups of groups of type $\F_\infty$ are of type $\F_\infty$, hence we can conclude the simplex stabilizer is of type $\F_\infty$.

	%
	%

\end{proof}


The filtration $(sX_{br}^{\leq n})_{n \in \N}$ of $sX_{br}$ has satisfied all but one of the hypotheses of what is known as  Brown's Criterion which we will now state, namely that
the connectivity of the pair $(sX_{br}^{\le n+1},sX_{br}^{\le n})$ tends to $\infty$ as $n$ tends to $\infty$.

\begin{brownscriterion}\cite[Corollary 3.3]{brown87}
	\label{browns_criterion}
	Let $G$ be a group and $X$ a contractible $G$-CW-complex such that the stabilizer of every cell is of type $\F_\infty$.  Let $\{X_j\}_{j\ge 1}$ be a filtration of $X$ such that each $X_j$ is finite $\textnormal{mod}$ $G$. Suppose that the connectivity of the pair $(X_{j+1},X_j)$ tends to $\infty$ as $j$ tends to $\infty$.  Then $G$ is of type $\F_\infty$.
\end{brownscriterion}

The property of being type $\F_\infty$ and the connectivity properties of CW-pairs $(X_{j+1}, X_j)$ where $X_j \subseteq X_{j+1}$ are closely related. A standard tool for working with such a relationship is Bestvina-Brady Morse Theory, introduced by Bestvina and Brady in \cite{bestvina_brady_97}. The main ideas and results of \cite{bestvina_brady_97} will be the discussion of the rest of this section. This is also the key tool we will need in order finish proving our main result.

Let $X$ be an affine cell complex as in \cite{Bux16}. For our purposes, we merely need $X$ to be a simplicial complex. 
We need a Morse function on $X$ in the sense of \cite{bestvina_brady_97}. Recall a \emph{Morse function} is a map $h:X \rightarrow \R$ that restricts to a non-constant affine map on each positive dimensional cell of $X$; that is $h$ restricted to an $n$-cell $C$ of $X^{(n)}$ for $n \geq 1$ is a nonconstant affine map, and $h(X^{(0)})$ is a discrete set in $\R$. Note that each cell has a unique vertex maximizing $h$. Call $h(x)$ the \textit{height} of $x$ for vertices $x$. We can now define the essential notions of the link of a vertex and the corresponding descending link of a vertex.

\begin{definition}
	Given that $y$ is an element of $X^{(0)}$, the \textit{link} of $y$ is the space of directions out of $y$ into $X$ and is denoted by $lk(y)$. Moreover, the \textit{descending link of $y$} is the space of directions along which the given Morse function $h$ goes down and is denoted by $lk_{\downarrow}(y)$.
\end{definition}

In particular, if $X$ is the geometric realization of a poset and $h$ is a Morse function on $X$, then $\lk(y)$ is homeomorphic to the realization of the subposet of all $x$ in $X$ such that $x < y$ or $y < x$. Moreover, $\dlk(y)$ is homeomorphic to the realization of the poset of all $x$ such that $x \leq y$ or $y \leq x$ and $h(x) \le h(y)$. More details on descending links and links in general can be found in \cite{bestvina_brady_97}, and the following Morse Lemma is a consequence of \cite[Corollary 2.6]{bestvina_brady_97}.

\begin{morselemma}
\label{lem:morse_lemma}
Let $X$ be a simplicial complex and $h: X \to \R$ be a Morse function. 
 \begin{enumerate}
	\item Suppose that for any vertex $y$ with $h(y)=t$, $\dlk(y)$
	is $(k-1)$-connected.  Then the pair $(Y^{\le t},Y^{<t})$ is
	$k$-connected, that is, the inclusion $Y^{<t}\hookrightarrow
	Y^{\le t}$ induces an isomorphism in~$\pi_j$ for $j<k$, and an epimorphism in $\pi_k$.

	\item Suppose that for any vertex $y$ with $h(y)\ge t$,
	$\dlk(y)$ is $(k-1)$-connected.  Then $(Y^{\leq t},Y^{< t})$ is
	$k$-connected.
 \end{enumerate}
\end{morselemma}

\medskip

Every simplex of $sX_{br}$ has a unique vertex maximizing the number of feet function $f$, so $f$ extends to a Morse function on $|sX_{br}|$. Hence we can inspect the connectivity of the pair $(sX_{br}^{\le n},sX_{br}^{< n})$ by looking at descending links with respect to $f$. In Section \ref{sec:desc_link_conn}, we will describe a convenient model for the descending links which will include discussing how if $f(x) = n$ then $\lk_{\downarrow}(x) \cong s\elbraigecpx_n$, and then analyze the high connectivity of the descending links. Furthermore, later in Section \ref{sec:desc_link_conn} we will prove that $s\elbraigecpx_n$ is highly connected in
Corollary \ref{cor:multicolored_desc_link_conn}. Our proof relies on a complex that we call the \emph{multicolored matching complex on a surface}, which we will define and analyze in the next section. 


\section{Multicolored matching complexes on surfaces}
\label{sec:surfaces}

First we will mention what a matching complex on a graph and a multicolored matching arc complex on a surface are as they will be important later. We will finish the section by proving the aforementioned high connectivity result about multicolored matching arc complexes on a surface.

\begin{definition}[Matching complex of a graph]
	Let $\Gamma$ be a graph.  The \textit{matching complex} $\match(\Gamma)$ of $\Gamma$ is the simplicial complex with a $k$-simplex for every collection~$\{e_0,\dots,e_k\}$ of $k+1$ pairwise disjoint edges, with the face relation given by passing to subcollections.
\end{definition}

We will now discuss colored arcs and multicolored matching arc complexes. Informally we consider arcs defined as paths between distinct endpoints and assign colors to the arcs. As in previous sections, we will $s \geq 1$ to be the number of desired number of colors. Let $S$ denote a connected surface, with boundary $\partial S$, and $P$ a finite set of points in $S\backslash \partial S$. The most important case will be $S$ as a disk.

\begin{definition}
	By a \textit{colored arc}, we mean a simple path in $S \backslash \partial S$ that intersects $P$ precisely at its endpoints, whose endpoints are distinct, and which is assigned a color between 1 and $s$.
\end{definition} 

A collection of arcs that are disjoint except possibly at their endpoints is called an arc system. For an illustration of a multicolored arc system, see Figure \ref{fig:multicolored_arc_complex_link}. Formally we have the following:

\begin{definition}[Multicolored arc systems]
	Let $\{\alpha_0, \dots, \alpha_k\}$ be a collection of colored arcs. If the $\alpha_i$ are all disjoint from each other except possibly at their endpoints, and if no $\alpha_i$ and $\alpha_j$ are homotopic relative $P$, we call $\{\alpha_0, \dots, \alpha_k\}$ a \textit{multicolored arc system}. The homotopy classes, relative $P$, of multicolored arc systems form the simplices of a simplicial complex, with the face relation given by passing to subcollections of arcs. 
\end{definition}

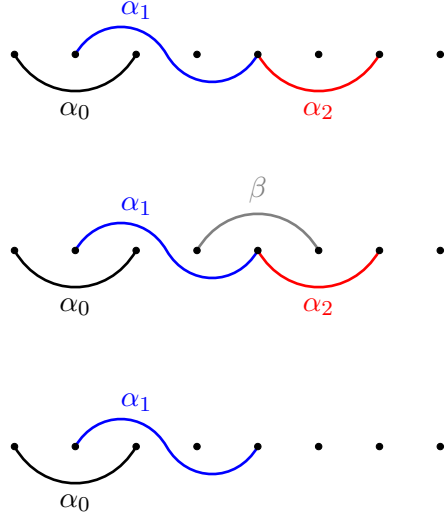
\begin{figure}[t]
     	\centering
     	\begin{tikzpicture}[scale=0.8]
     	\draw[line width=1pt]
     	(0,0) to [out=-60, in=240,looseness=1.2]  
     	node[pos=0.5, below] {$\alpha_{0}$} (2,0);
     	\draw[line width=1pt, blue]
     	(1,0) to [out= 60, in=120,looseness=1.2] 
     	node[pos=0.65, above] {$\alpha_{1}$} (2.5,0)
     	to [out=-60, in=240,looseness=1.2] (4,0);
     	\draw[line width=1pt, red]
     	(4,0) to [out=-60, in=240,looseness=1.2]  
     	node[pos=0.5, below] {$\alpha_{2}$} (6,0);
     	
     	\filldraw 
     	(0,0) circle (1.5pt)
     	(1,0) circle (1.5pt)
     	(2,0) circle (1.5pt)
     	(3,0) circle (1.5pt)
     	(4,0) circle (1.5pt)
     	(5,0) circle (1.5pt)
     	(6,0) circle (1.5pt)
     	(7,0) circle (1.5pt);  
     	
     	\begin{scope}[yshift=-3.25cm]
     	\draw[line width=1pt]
     	(0,0) to [out=-60, in=240,looseness=1.2]  
     	node[pos=0.5, below] {$\alpha_{0}$} (2,0);
     	\draw[line width=1pt, blue]
     	(1,0) to [out= 60, in=120,looseness=1.2] 
     	node[pos=0.65, above] {$\alpha_{1}$} (2.5,0)
     	to [out=-60, in=240,looseness=1.2] (4,0);
     	\draw[line width=1pt, red]
     	(4,0) to [out=-60, in=240,looseness=1.2]  
     	node[pos=0.5, below] {$\alpha_{2}$} (6,0);
     	\draw[line width=1pt,gray]
     	(3,0) to [out=60, in=120,looseness=1.2]  
     	node[pos=0.5, above] {$\beta$} (5,0);
     	
     	\filldraw 
     	(0,0) circle (1.5pt)
     	(1,0) circle (1.5pt)
     	(2,0) circle (1.5pt)
     	(3,0) circle (1.5pt)
     	(4,0) circle (1.5pt)
     	(5,0) circle (1.5pt)
     	(6,0) circle (1.5pt)
     	(7,0) circle (1.5pt);  
     	\end{scope} 
     	
     	\begin{scope}[yshift=-6.5cm]
     	\draw[line width=1pt]
     	(0,0) to [out=-60, in=240,looseness=1.2]  
     	node[pos=0.5, below] {$\alpha_{0}$} (2,0);
     	\draw[line width=1pt, blue]
     	(1,0) to [out= 60, in=120,looseness=1.2] 
     	node[pos=0.65, above] {$\alpha_{1}$} (2.5,0)
     	to [out=-60, in=240,looseness=1.2] (4,0);
     	
     	\filldraw
     	(0,0) circle (1.5pt)
     	(1,0) circle (1.5pt)
     	(2,0) circle (1.5pt)
     	(3,0) circle (1.5pt)
     	(4,0) circle (1.5pt)
     	(5,0) circle (1.5pt)
     	(6,0) circle (1.5pt)
     	(7,0) circle (1.5pt);  
     	\end{scope}
     	\end{tikzpicture}
     	\caption{From top to bottom: a multicolored arc system $\sigma$, a simplex after adding a colored arc to $\sigma$, and a simplex after taking away a colored arc from $\sigma$.}
     	\label{fig:multicolored_arc_complex_link}
\end{figure}

Recall that $K_n$ is the complete graph on $n$ vertices. 

\begin{definition}[Multicolored matching complex on a surface]
	Let $s\matcharc(K_n)$ be the complex whose simplices are given by multicolored arc systems whose arcs are pairwise disjoint including at their endpoints. We call $s\matcharc(K_n)$ the \textit{multicolored matching arc complex} on $(S,P)$ corresponding to $K_n$. 
\end{definition}

Observe that if $s = 1$, then $s\matcharc(K_n) = \matcharc(K_n)$ is the subcomplex from \cite{Bux16}. Define $\nu(n) \defeq  \floor{\frac{n-2}{3}}$ where $n \in \N$. It was proven in \cite{Bux16} that $\matcharc(K_n)$ is $(\nu(n)-1)$-connected  We will now prove the higher connectivity of $s\matcharc(K_n)$ for arbitrary $s$. 

\begin{theorem}
	\label{thrm:multicolored_surface_matching_conn}
	The complex $s\matcharc(K_n)$ is $(\nu(n)-1)$-connected.
\end{theorem}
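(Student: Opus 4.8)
The plan is to induct on $n$, comparing $s\matcharc(K_n)$ with the larger complex $\mathcal{B}$ of \emph{all} multicolored arc systems on $(S,P)$ --- those in which arcs are required to be disjoint only away from their endpoints --- and to transfer connectivity from $\mathcal{B}$ down to $s\matcharc(K_n)$ by a Hatcher-style surgery. The base of the induction is immediate: for $n\le 4$ one has $\nu(n)-1\le -1$, so the assertion reduces to nonemptiness (clear once $n\ge 2$, as a single colored arc is already a vertex) or is vacuous; and the case $s=1$ is precisely the connectivity of $\matcharc(K_n)$ established in \cite{Bux16}, which the argument below extends by carrying the colors along.

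First I would check that $\mathcal{B}$ is contractible. For $S$ a disk --- the case needed for the application, which I would treat in detail --- this follows by a Hatcher-style flow argument for arc complexes of punctured surfaces, with the colors carried along inertly; a general surface $S$ with boundary is handled the same way, or by reduction to the disk case. Granting this, it suffices to prove that the inclusion $s\matcharc(K_n)\hookrightarrow\mathcal{B}$ is $\nu(n)$-connected: the long exact sequence of the pair, together with $\pi_\ast(\mathcal{B})=0$, then gives $\pi_i(s\matcharc(K_n))=0$ for all $i\le\nu(n)-1$.

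To prove the inclusion is $\nu(n)$-connected I would take a map of a $k$-sphere into $s\matcharc(K_n)$ with $k\le\nu(n)-1$, regard it as a map into $\mathcal{B}$, where it bounds a disk, make the disk simplicial, and then eliminate the ``bad'' simplices in its image --- arc systems in which two arcs share an endpoint --- in order of decreasing badness, replacing each by an admissible resolution. The heart of the matter is that when a bad simplex has two of its arcs meeting at a puncture $p$ with remaining endpoints $a$ and $b$, the complex of admissible resolutions --- roughly, the ways to merge or reroute the offending arcs past the bottleneck at $p$ and then complete to a matching on the punctures left over --- is governed by the multicolored matching complex $s\matcharc(K_{n'})$ on a set of at least $n-3$ punctures. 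Since $\nu$ is nondecreasing and $\nu(n-3)=\lfloor\frac{n-5}{3}\rfloor=\nu(n)-1$, the inductive hypothesis makes that complex $(\nu(n)-2)$-connected, which is exactly the connectivity required for one surgery step; this is also where the $3$ in $\nu(n)=\lfloor\frac{n-2}{3}\rfloor$ originates. The formal surgery bookkeeping --- that decreasing badness can be arranged, that the relevant links have the asserted form, and that the resulting spheres bound --- is the content of Lemmas~\ref{lem:desc_morse_link_contra},~\ref{lem:remove_simplex_connectivity}, and~\ref{lem:boundary_inclusion_nullhomotopic}.

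I expect the main obstacle to be exactly this combinatorial core: identifying, for a bad simplex, the complex of admissible resolutions and verifying that it has the claimed connectivity. In the uncolored setting this is the argument of \cite{Bux16}; the genuinely new phenomenon for $s\ge 2$ is that several colored arcs may be incident to the bottleneck puncture $p$, and that ``merging'' two arcs of different colors is ambiguous, so the resolution complex is honestly multicolored --- controlling it is the purpose of the bottleneck/merge Lemma~\ref{lem:bottleneck_merge}. The remaining ingredients --- the base cases, the contractibility of $\mathcal{B}$, and the standard sphere-filling formalism --- I expect to be routine, and the passage from a disk to a general surface to be a cosmetic change.
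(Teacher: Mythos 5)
Your strategy (redo the Hatcher-style bad-simplex surgery of \cite{Bux16} with colors carried along, relative to the contractible complex $\mathcal{B}$ of all multicolored arc systems) is a genuinely different route from the paper's, but as written it has a real gap exactly where you locate the ``heart of the matter.'' The connectivity of the complex of admissible resolutions at a bad simplex is only asserted, and the lemmas you invoke to supply the bookkeeping do not do that job: Lemmas \ref{lem:desc_morse_link_contra}, \ref{lem:bottleneck_merge}, \ref{lem:remove_simplex_connectivity} and \ref{lem:boundary_inclusion_nullhomotopic} concern split and merge links of dangling multicolored braiges in $s\elbraigecpx_n$ (Section \ref{sec:desc_link_conn}), not arc systems or surgery on a disk mapped into $\mathcal{B}$; in particular the ``bottleneck merge'' lemma says nothing about two colored arcs meeting at a puncture. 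So the multicolored complication you correctly identify (ambiguity in merging arcs of different colors at the shared endpoint) is left unresolved. A second, smaller gap: contractibility of $\mathcal{B}$ is not immediate from ``Hatcher with colors carried inertly,'' because surgering along a fixed arc can produce two arcs that become homotopic rel $P$ but carry different colors, and the paper's definition of a multicolored arc system forbids homotopic arcs in one system; you would either have to modify the complex or prove contractibility by a fiber argument over the uncolored arc complex.

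That last remark points to the paper's actual proof, which is much shorter and avoids surgery entirely: one takes the color-forgetting simplicial map $\phi\colon s\matcharc(K_n)\to\matcharc(K_n)$ and applies Quillen's fiber theorem \cite[Theorem 9.1]{quillen78} (after barycentric subdivision), using \cite[Theorem 3.8]{Bux16} as a black box for the target. The link of a $k$-simplex in $\matcharc(K_n)$ is itself a matching arc complex $\matcharc(K_{n-2k-2})$, hence $(\nu(n)-k-2)$-connected by the same black box and a short floor-function estimate, and the fiber over a $k$-simplex is the join of the $k+1$ nonempty vertex fibers (the possible colorings), hence $(k-1)$-connected. No induction on $n$, no contractible ambient complex, and no analysis of merging colored arcs is needed. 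If you want to salvage your approach you would essentially have to redo the combinatorial core of \cite{Bux16} in the colored setting; the projection-plus-Quillen argument shows this is unnecessary.
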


\begin{proof}
	Let $\sigma$ be a simplex in $\matcharc(K_n)$ where $\dim(\sigma) = k$. Define a map $\phi: s\matcharc(K_n) \to \matcharc(K_n)$ which sends a multicolored arc to the unique arc with the same endpoints. In other words, $\phi(\sigma)$ for $\sigma \in s\matcharc(K_n)$ is $\sigma$ after forgetting all of its colors. Observe that while $s\matcharc(K_n)$ and $\matcharc(K_n)$ are not posets, we can barycentrically divide which makes the barycentric subdivisions of $s\matcharc(K_n)$ and $s\matcharc(K_n)$ geometric realizations of posets. This is sufficient enough to work for \cite{quillen78}[Theorem 9.1]. We know by \cite[Theorem 3.8]{Bux16} that $\matcharc(K_n)$ is $(\nu(n)-1)$-connected. It suffices to show that the link $\lk(\sigma)$ is $(\nu(n) - k - 2)$-connected and the fiber $\phi^{-1}(\sigma)$ is $(k - 1)$-connected. One can tell that the links of $\matcharc(K_n)$ are matching arc complexes themselves, that is, $\lk(\sigma)$ is isomorphic to $\matcharc(K_{n-2(k+1)})$. It follows from \cite{Bux16}[Theorem 3.8] that $\lk(\sigma)$ is $(\nu(n-2k-2) - 1)$-connected. Observe the following inequality:
	
		\begin{tabular}{lll}
	$\nu(n - 2k - 2) - 1$ & $=$ & $\floor{\frac{(n -2k - 2)-2}{3}} - 1$ \\
						 & $=$ & $\floor{\frac{(n-2) -2k - 2}{3}} - 1$ \\
						 & $\geq$ & $\floor{\frac{(n-2) -2k}{3}} - 2$ \\
						 & $\geq$ & $\floor{\frac{n-2}{3}} - k - 2$ \\
						 & $=$  & $\nu(n) - k - 2$ \\
		\end{tabular} 
	
	So the link is $(\nu(n) - k - 2)$-connected as desired.  Observe that the fiber $\phi^{-1}(\sigma)$ is the join of the fibers of the $k+1$ vertices of $\sigma$. Therefore $\phi^{-1}(\sigma)$ is $(k - 1)$-connected being the join of $k+1$ nonempty things. We can now apply \cite[Theorem 9.1]{quillen78} and conclude that $s\matcharc(K_n)$ is $(\nu(n)-1)$-connected.
\end{proof}


\section{Descending links in the Stein complex}
\label{sec:desc_link_conn}

We now return to the Stein complex $sX_{br}$ and inspect the descending links of vertices with respect to the height function $f$. 
There is a relationship between the descending link $\dlk(x)$ of a vertex $x$ where $f(x) = n$ in the Stein complex $sX_{br}$ and $s\elbraigecpx_n$. In particular, thanks to left cancellation $\dlk(x)$ is isomorphic to the simplicial complex $s\elbraigecpx_n$. See Figure \ref{fig:desc_lk_to_sEBn} for a visualization of this correspondence. It is standard to draw multicolored braiges as emerging from a horizontal line as there are no splits and as a visual reminder of this correspondence. 

It is also important to define $sL_n$ to be the linear graph with $n+1$ vertices labeled $v_0, \dots, v_n$ and exactly $s$ edges $e_{i, j}$ between each vertex with $Ends(e_{i,j}) = \{v_{i-1}, v_i\}$ for $1 \leq i \leq n$, $1 \leq j \leq s$. 
Let $\match(sL_{n-1})$ be the matching complex on $sL_{n-1}$. 

\begin{figure}
	\centering
	\begin{tikzpicture}
		
		
		\coordinate (a') at (0,0);
		\coordinate (e') at (-2,-2);
		\coordinate (f') at (0,-4);
		\coordinate (g') at (2,-2);
		\coordinate (h') at (-2,0);
		\coordinate (m') at (-1,-2.5);
		
		
		\filldraw[lightgray]
		(e') -- (m') -- (f') -- (e')
		(a') -- (e') -- (m') -- (f') -- (g') -- (a');
		
		\draw
		(a') -- (e')
		(a') -- (g')
		(e') -- (f')
		(f') -- (g')
		(f') -- (m')
		(m') -- (a')
		(m') -- (e')
		(m') -- (g');
		\draw[gray]
		(a') -- (f');
		
		\filldraw (a') circle (1pt);
		\filldraw (e') circle (1pt);
		\filldraw (f') circle (1pt);
		\filldraw (g') circle (1pt);
		\filldraw (m') circle (1pt);
		%
		%
		\begin{scope}[xshift=-1cm,yshift=-1cm]
			\coordinate (z) at (1, 1.9);
			\coordinate (z') at (1, 1.7);
			\coordinate (a) at (.75,1.4);
			\coordinate (b) at (1.25,1.4);
			\coordinate (x) at (1,1.52);
			\coordinate (c) at (0.85, 1.4);
			\coordinate (d) at (0.85, 1.2);
			\coordinate (e) at (0.95, 1.4);
			\coordinate (f) at (0.95, 1.2);
			\coordinate (g) at (1.05, 1.4);
			\coordinate (h) at (1.05, 1.2);
			\coordinate (i) at (1.15, 1.4);
			\coordinate (j) at (1.15, 1.2);
			\draw
			(a) to [out=90, in=90, looseness=2] (b) -- (a)
			(z) -- (z')
			(c) -- (d)   
			(e) -- (f)   
			(g) -- (h)   
			(i) -- (j);
			\node at (x) {$x$};
		\end{scope}
		
		
		\begin{scope}[xshift=-3.25cm, yshift=-3cm]
			\coordinate (z) at (1, 1.9);
			\coordinate (z') at (1, 1.7);
			\coordinate (a) at (.75,1.4);
			\coordinate (b) at (1.25,1.4);
			\coordinate (x) at (1,1.52);
			\coordinate (c) at (0.85, 1.4);
			\coordinate (d) at (0.85, 1.2);
			\coordinate (e) at (0.95, 1.4);
			\coordinate (f) at (0.95, 1.2);
			\coordinate (g) at (1.05, 1.4);
			\coordinate (h) at (1.05, 1.2);
			\coordinate (i) at (1.15, 1.4);
			\coordinate (j) at (1.15, 1.2);
			\coordinate (k) at (1.1, 1.1);
			\draw
			(a) to [out=90, in=90, looseness=2] (b) -- (a)
			(z) -- (z')
			(c) -- (d)   
			(e) -- (f)   
			(g) -- (h)   
			(i) -- (j);
			\node at (x) {$x$};
			\draw[blue]
			(h) -- (k) -- (j);
		\end{scope}
		
		\begin{scope}[xshift=1.25cm, yshift=-3cm]
			\coordinate (z) at (1, 1.9);
			\coordinate (z') at (1, 1.7);
			\coordinate (a) at (.75,1.4);
			\coordinate (b) at (1.25,1.4);
			\coordinate (x) at (1,1.52);
			\coordinate (c) at (0.85, 1.4);
			\coordinate (d) at (0.85, 1.2);
			\coordinate (e) at (0.95, 1.4);
			\coordinate (f) at (0.95, 1.2);
			\coordinate (g) at (1.05, 1.4);
			\coordinate (h) at (1.05, 1.2);
			\coordinate (i) at (1.15, 1.4);
			\coordinate (j) at (1.15, 1.2);
			\coordinate (k) at (0.9, 1.1);
			\draw
			(a) to [out=90, in=90, looseness=2] (b) -- (a)
			(z) -- (z')
			(c) -- (d)   
			(e) -- (f)   
			(g) -- (h)   
			(i) -- (j);
			\draw[blue]
			(d) -- (k) -- (f);
			\node at (x) {$x$};
		\end{scope}
		
		\begin{scope}[xshift=-2.5cm,yshift=-4.5cm]
			\coordinate (z) at (1, 1.9);
			\coordinate (z') at (1, 1.7);
			\coordinate (a) at (.75,1.4);
			\coordinate (b) at (1.25,1.4);
			\coordinate (x) at (1,1.52);
			\coordinate (c) at (0.85, 1.4);
			\coordinate (d) at (0.85, 1.2);
			\coordinate (e) at (0.95, 1.4);
			\coordinate (f) at (0.95, 1.2);
			\coordinate (g) at (1.05, 1.4);
			\coordinate (h) at (1.05, 1.2);
			\coordinate (i) at (1.15, 1.4);
			\coordinate (j) at (1.15, 1.2);
			\coordinate (k) at (1.1, 1.1);
			\coordinate (l) at (0.9, 1.1);
			\draw
			(a) to [out=90, in=90, looseness=2] (b) -- (a)
			(z) -- (z')
			(c) -- (d)   
			(e) -- (f)   
			(g) -- (h)   
			(i) -- (j);
			\node at (x) {$x$};
			\draw[blue]
			(d) -- (l) -- (f)
			(h) -- (k) -- (j);
		\end{scope}

		\begin{scope}[xshift=-0.6cm, yshift=-5.6cm]
			\coordinate (z) at (1, 1.9);
			\coordinate (z') at (1, 1.7);
			\coordinate (a) at (.75,1.4);
			\coordinate (b) at (1.25,1.4);
			\coordinate (x) at (1,1.52);
			\coordinate (c) at (0.85, 1.4);
			\coordinate (d) at (0.85, 1.2);
			\coordinate (e) at (0.95, 1.4);
			\coordinate (f) at (0.95, 1.2);
			\coordinate (g) at (1.05, 1.4);
			\coordinate (h) at (1.05, 1.2);
			\coordinate (i) at (1.15, 1.4);
			\coordinate (j) at (1.15, 1.2);
			\coordinate (k) at (1.1, 1.1);
			\coordinate (l) at (0.9, 1.1);
			\coordinate (m) at (1, .9);
			\draw
			(a) to [out=90, in=90, looseness=2] (b) -- (a)
			(z) -- (z')
			(c) -- (d)   
			(e) -- (f)   
			(g) -- (h)   
			(i) -- (j);
			\node at (x) {$x$};
			\draw[blue]
			(d) -- (l) -- (f)
			(h) -- (k) -- (j);
			\draw[red]
			(l) -- (m) -- (k);
		\end{scope}
		
		\node at (3.5,-2) {$\longleftrightarrow$};
		
		\coordinate (A) at (4.5, 0);
		\coordinate (B) at (8.5, 0);
		\coordinate (C) at (6.5, -1.5);
		\coordinate (D) at (6.5, -4);
		
		\fill[lightgray]
		(A) -- (C) -- (D) -- (A)
		(B) -- (C) -- (D) -- (B);
		\filldraw
		(A) circle (1.5pt)
		(B) circle (1.5pt)
		(C) circle (1.5pt)
		(D) circle (1.5pt);
		\draw
		(A) -- (C)
		(A) -- (D)
		(B) -- (C)
		(B) -- (D)
		(C) -- (D);
		%
		
		\begin{scope}[xshift=2.5cm, yshift=-1.75cm, scale=2]
			\coordinate (a) at (.75,1.4);
			\coordinate (b) at (1.25,1.4);
			\coordinate (x) at (1,1.52);
			\coordinate (c) at (0.85, 1.4);
			\coordinate (d) at (0.85, 1.2);
			\coordinate (e) at (0.95, 1.4);
			\coordinate (f) at (0.95, 1.2);
			\coordinate (g) at (1.05, 1.4);
			\coordinate (h) at (1.05, 1.2);
			\coordinate (i) at (1.15, 1.4);
			\coordinate (j) at (1.15, 1.2);
			\coordinate (k) at (1.1, 1.1);
			\coordinate (l) at (0.9, 1.1);
			\coordinate (m) at (1, .9);
			\draw
			(b) -- (a)
			(c) -- (d)   
			(e) -- (f)   
			(g) -- (h)   
			(i) -- (j);
			\draw[blue]
			(d) -- (l) -- (f);
		\end{scope}
		
		\begin{scope}[xshift=6.9cm, yshift=-1.75cm, scale=2]
			\coordinate (a) at (.75,1.4);
			\coordinate (b) at (1.25,1.4);
			\coordinate (x) at (1,1.52);
			\coordinate (c) at (0.85, 1.4);
			\coordinate (d) at (0.85, 1.2);
			\coordinate (e) at (0.95, 1.4);
			\coordinate (f) at (0.95, 1.2);
			\coordinate (g) at (1.05, 1.4);
			\coordinate (h) at (1.05, 1.2);
			\coordinate (i) at (1.15, 1.4);
			\coordinate (j) at (1.15, 1.2);
			\coordinate (k) at (1.1, 1.1);
			\coordinate (l) at (0.9, 1.1);
			\coordinate (m) at (1, .9);
			\draw
			(b) -- (a)
			(c) -- (d)   
			(e) -- (f)   
			(g) -- (h)   
			(i) -- (j);
			\draw[blue]
			(h) -- (k) -- (j);
		\end{scope}
		
		\begin{scope}[xshift=4.5cm, yshift=-3.1cm, scale=2]
			\coordinate (a) at (.75,1.4);
			\coordinate (b) at (1.25,1.4);
			\coordinate (x) at (1,1.52);
			\coordinate (c) at (0.85, 1.4);
			\coordinate (d) at (0.85, 1.2);
			\coordinate (e) at (0.95, 1.4);
			\coordinate (f) at (0.95, 1.2);
			\coordinate (g) at (1.05, 1.4);
			\coordinate (h) at (1.05, 1.2);
			\coordinate (i) at (1.15, 1.4);
			\coordinate (j) at (1.15, 1.2);
			\coordinate (k) at (1.1, 1.1);
			\coordinate (l) at (0.9, 1.1);
			\coordinate (m) at (1, .9);
			\draw
			(b) -- (a)
			(c) -- (d)   
			(e) -- (f)   
			(g) -- (h)   
			(i) -- (j);
			\draw[blue]
			(d) -- (l) -- (f)
			(h) -- (k) -- (j);
		\end{scope}

		\begin{scope}[xshift=4.6cm, yshift=-7cm, scale=2]
			\coordinate (a) at (.75,1.4);
			\coordinate (b) at (1.25,1.4);
			\coordinate (x) at (1,1.52);
			\coordinate (c) at (0.85, 1.4);
			\coordinate (d) at (0.85, 1.2);
			\coordinate (e) at (0.95, 1.4);
			\coordinate (f) at (0.95, 1.2);
			\coordinate (g) at (1.05, 1.4);
			\coordinate (h) at (1.05, 1.2);
			\coordinate (i) at (1.15, 1.4);
			\coordinate (j) at (1.15, 1.2);
			\coordinate (k) at (1.1, 1.1);
			\coordinate (l) at (0.9, 1.1);
			\coordinate (m) at (1, .9);
			\draw
			(b) -- (a)
			(c) -- (d)   
			(e) -- (f)   
			(g) -- (h)   
			(i) -- (j);
			\draw[blue]
			(d) -- (l) -- (f)
			(h) -- (k) -- (j);
			\draw[red]
			(l) -- (m) -- (k);
		\end{scope}
	\end{tikzpicture}
	\caption{The correspondence between $\dlk(x)$ and $s\elbraigecpx_n$.}
	\label{fig:desc_lk_to_sEBn}
\end{figure}

%
%
%

\begin{observation}
	Very elementary forests with $n$ leaves correspond bijectively to simplices of $\match(sL_{n-1})$. This is given by identifying a colored caret to the same colored edge. See Figure \ref{fig:matchings_to_multicolored_forests} for an example of this correspondence.
\end{observation}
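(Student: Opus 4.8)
The plan is to exhibit the correspondence explicitly and then check it is a bijection (in fact an isomorphism of simplicial complexes). First I would fix the dictionary: label the leaves of a forest $1,\dots,n$ from left to right and identify the $k$-th leaf with the vertex $v_{k-1}$ of $sL_{n-1}$, so that the $n$ leaves match the $n$ vertices of $sL_{n-1}$. In a very elementary forest each root is either a bare leaf or carries a single caret, and the two children of such a caret are necessarily consecutive leaves, say the pair $\{i,i+1\}$ with $1\le i\le n-1$; moreover carets sitting on distinct roots span disjoint pairs of consecutive leaves, since a leaf cannot belong to two different roots. Under the dictionary, a caret of color $j$ on the pair $\{i,i+1\}$ is sent to the edge $e_{i,j}$ of $sL_{n-1}$, whose endpoints are $\{v_{i-1},v_i\}$, and a very elementary forest $F$ with $n$ leaves is sent to the set $\Phi(F)$ of edges obtained from its carets.

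Next I would verify $\Phi(F)$ is a simplex of $\match(sL_{n-1})$, i.e.\ that the chosen edges are pairwise disjoint. If $F$ has two distinct carets, they lie on distinct roots, hence span disjoint pairs $\{i,i+1\}$ and $\{i',i'+1\}$; translating, the corresponding edges $e_{i,j}$ and $e_{i',j'}$ have disjoint endpoint sets $\{v_{i-1},v_i\}$ and $\{v_{i'-1},v_{i'}\}$ (in particular $i\ne i'$, so one never selects two parallel edges in the same slot). Conversely, given a matching $M$ in $sL_{n-1}$, each edge $e_{i,j}\in M$ prescribes a color-$j$ caret on the consecutive leaf pair $\{i,i+1\}$; disjointness of the edges of $M$ forces these pairs to be disjoint, so one obtains a well-defined very elementary forest $\Psi(M)$ with $n$ leaves by putting a caret on each such pair and leaving every other leaf bare. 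A direct check shows $\Phi$ and $\Psi$ are mutually inverse, since both a very elementary forest with $n$ leaves and a matching in $sL_{n-1}$ are determined by exactly the same data: a partial function from the slots $\{1,\dots,n-1\}$ to colors $\{1,\dots,s\}$ whose support is a set of pairwise non-adjacent slots.

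Finally I would record that this bijection respects the face relation: removing one caret from $F$ corresponds precisely to deleting one edge from $\Phi(F)$, so $\Phi$ and $\Psi$ are inverse isomorphisms of simplicial complexes, with the empty simplex corresponding to the trivial forest $1_n$. There is essentially no hard step here; the only thing to be careful about is the bookkeeping of the off-by-one among the $n$ leaves, the $n-1$ caret slots, and the $n$ vertices of $sL_{n-1}$, together with the remark that the "at most one caret per head" condition on the forest side matches exactly the fact that a matching uses at most one of the $s$ parallel edges in each slot.
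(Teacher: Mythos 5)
Your proposal is correct and is exactly the correspondence the paper asserts (and leaves unproven): identifying a color-$j$ caret on consecutive leaves $\{i,i+1\}$ with the edge $e_{i,j}$ of $sL_{n-1}$, with disjointness of carets across roots matching disjointness of edges in a matching. Your write-up just supplies the routine bookkeeping (the leaf/vertex off-by-one, the face relation, and the trivial forest versus empty simplex), so there is nothing to add.
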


\subsection{Connectivity of $s\velbraigecpx_{n}$}

We will now construct a projection $s\velbraigecpx_n \to s\matcharc(K_n)$ to prove the connectivity of $s\velbraigecpx_n$ using the connectivity of $s\matcharc(K_n)$ and the tools of \cite{quillen78}. Similar to what was done in \cite{Bux16}, we will denote a very elementary multicolored $n$-braige by $(b, \Gamma)$, where $b \in B_n$ and $\Gamma$ is a simplex in $\match(sL_{n-1})$. As we have the equivalence under dangling, we denote the equivalence class by $[(b, \Gamma)]$.

\begin{figure}[t]
	\centering
		\begin{tikzpicture}
			
		\coordinate (a) at (0,0);
		\coordinate (b) at ($(a)+(0.5,-1)$);
		\coordinate (c) at ($(a)+(1,0)$);
		\coordinate (d) at ($(a)+(2,0)$);
		\coordinate (e) at ($(a)+(2.5,-1)$);
		\coordinate (f) at ($(a)+(3,0)$);
		\coordinate (g) at ($(a)+(4,0)$);
		\coordinate (h) at ($(a)+(5,0)$);
		\coordinate (i) at ($(a)+(5.5,-1)$);
		\coordinate (j) at ($(a)+(6,0)$);
		\coordinate (k) at ($(a)+(7,0)$);
		\coordinate (l) at ($(a)+(7.5,-1)$);
		\coordinate (m) at ($(a)+(8,0)$);
			
		\filldraw[red]
		(a) circle (1.5pt)
		(b) circle (1.5pt)
		(c) circle (1.5pt)
		(k) circle (1.5pt)
		(l) circle (1.5pt)
		(m) circle (1.5pt);
		\filldraw[blue]
		(d) circle (1.5pt)
		(e) circle (1.5pt)
		(f) circle (1.5pt)		
		(h) circle (1.5pt)
		(i) circle (1.5pt)
		(j) circle (1.5pt);
		\filldraw
		(g) circle (1.5pt);

		\draw[line width=1pt, red]
		(a) -- (b) -- (c)
		(k) -- (l) -- (m);
		
		\draw[line width=1pt, blue]
		(d) -- (e) -- (f)
		(h) -- (i) -- (j);
		
		\node[rotate=-90] at (4,-1.5) {$\mapsto$};
		
		\begin{scope}[yshift=-2.5cm]
		\coordinate (a) at (0,0);
		\coordinate (b) at ($(a)+(1,0)$);
		\coordinate (c) at ($(a)+(2,0)$);
		\coordinate (d) at ($(a)+(3,0)$);
		\coordinate (e) at ($(a)+(4,0)$);
		\coordinate (f) at ($(a)+(5,0)$);
		\coordinate (g) at ($(a)+(6,0)$);
		\coordinate (h) at ($(a)+(7,0)$);
		\coordinate (i) at ($(a)+(8,0)$);

		\filldraw[red]
		(a) circle (1.5pt)
		(b) circle (1.5pt)
		(h) circle (1.5pt)
		(i) circle (1.5pt);
		\filldraw[blue]
		(c) circle (1.5pt)
		(d) circle (1.5pt)
		(f) circle (1.5pt)
		(g) circle (1.5pt);
		\filldraw
		(e) circle (1.5pt);
		
		\draw[line width=1pt, red]
		(a) -- (b)   
		(h) -- (i);
		\draw[line width=1pt, blue]
		(c) -- (d)   
		(f) -- (g);
		\end{scope}
		\end{tikzpicture}
	\caption{An example of the bijective correspondence between very elementary multicolored forests with $9$ leaves and simplices of $\match(2L_8)$.}
	\label{fig:matchings_to_multicolored_forests}
\end{figure}
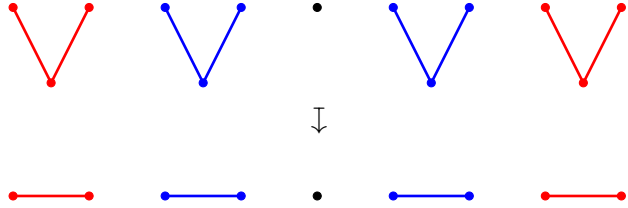

Now let $S$ be the unit disk and fix an embedding $sL_{n-1} \hookrightarrow S$ to be the map which sends a colored edge between two vertices $v_i$ and $v_{i+1}$ for $1 \leq i \leq n-1$ to the same coloring of a colored arc between the $i^{th}$ and $(i+1)^{st}$ puncture of $S$. Let $P$ be the image of the vertex set under this map, hence $P$ has $n$ points in $S$ labeled $v_1, \dots, v_n$. This set up allows us to consider the $s$-colored matching arc complex $s\matcharc(K_n)$ where we have an induced embedding of simplicial complexes $\match(sL_{n-1}) \hookrightarrow s\matcharc(K_n)$. We know from \cite{Birman74} that the braid group on $n$ strands $B_n$ is isomorphic to the mapping class group of the $n$-punctured disk $D_n$. Since $S\setminus P = D_n$, we have an action of $B_n$ on $s\matcharc(K_n)$. We will specialize to the right action as that follows from what we discussed for dangling multicolored $n$-braiges, and thus we introduce the notation $(\sigma)b$ to be the image of $\sigma$ under $b$ where $\sigma \in s\matcharc(K_n)$ and $b \in B_n$. 

We will now define our map $s\pi$ from $s\velbraigecpx_n$ to $s\matcharc(K_n)$. Viewing $\match(sL_{n-1})$ as a subcomplex of $s\matcharc(K_n)$ we can associate to a very elementary multicolored $n$-braige $(b, \Gamma)$ the multicolored arc complex $(\Gamma)b^{-1}$ in $s\matcharc(K_n)$. By construction with the above discussion, the map $(b, \Gamma) \to (\Gamma)b^{-1}$ is well defined on equivalence classes under dangling, so we obtain the following map:

\begin{definition}
	Using the above construction, define the following simplicial map: \[s\pi: s\velbraigecpx_n \to s\matcharc(K_n)\] \[[(b,\Gamma)] \to (\Gamma)b^{-1}.\] This map has a visualization of viewing merges as arcs, then ``combing straight'' the braid and seeing where the arcs are taken.
\end{definition}  

Note that $s\pi$ is surjective but not injective. This visualization can be seen in Figure \ref{fig:BMD-to-multi-arc-cplx}. Note that the resulting simplex $(\Gamma)b^{-1}$ of $s\matcharc(K_n)$ has the same dimension as the simplex $[(b,\Gamma)]$ of $s\velbraigecpx_n$. 
The following lemma and its corresponding proof are similar to the monocolored case where $s=1$. 

\begin{figure}[t]
	\centering
	\begin{tikzpicture}
		\clip(-.30,0.4) rectangle (1.80,-2.95);
		
		\draw[line width=3pt, white] (0.5,0) to [out=270, in=90] (0,-2);
		\draw[line width=1pt] (0.5,0) to [out=270, in=90] (0,-2);
		
		\draw[line width=3pt, white] (0,0) to [out=270, in=90] (1,-2);    
		\draw[line width=1pt] (0,0) to [out=270, in=90] (1,-2);    
		
		\draw[line width=3pt, white] (1,0) to [out=270, in=90] (0.5,-2);
		\draw[line width=1pt] (1,0) to [out=270, in=90] (0.5,-2);
		
		\draw[line width=1pt] (1.5,0) to (1.5,-2);
		
		\draw[line width=1pt] (0,-2) -- (0,-2.1);
		
		\draw[line width=1pt, red] (0,-2.1) -- (0.25,-2.5) -- (0.5,-2.1);
		
		\draw[line width=1pt] (0.5,-2.1) -- (0.5,-2.0);
		
		\draw[line width=1pt, blue] (1,-2.1) -- (1.25,-2.5) -- 
		(1.5,-2.1);
		
		\draw[line width=1pt] (1,-2) -- (1,-2.1)  
		(1.5,-2.1) -- (1.5,-2.0);
		
		\filldraw[red] 
		(0.25,-2.5) circle (1pt)
		(0.5,-2.1) circle (1pt)
		(0,-2.1) circle (1pt);
		\filldraw[blue]
		(1,-2.1) circle (1pt) 
		(1.25, -2.5) circle (1pt)
		(1.5,-2.1) circle (1pt);
		
		\draw[line width=1pt] (-0.25,0) -- (1.75,0);
		
	\end{tikzpicture}
	\qquad
	\begin{tikzpicture}
		\clip(-.30,0.4) rectangle (1.80,-2.95);
		\draw[line width=3pt, white] (0.5,0) to [out=270, in=90] (0,-2);
		\draw[line width=1pt] (0.5,0) to [out=270, in=90] (0,-2);
		
		\draw[line width=3pt, white] (0,0) to [out=270, in=90] (1,-2);    
		\draw[line width=1pt] (0,0) to [out=270, in=90] (1,-2);    
		
		\draw[line width=3pt, white] (1,0) to [out=270, in=90] (0.5,-2);
		\draw[line width=1pt] (1,0) to [out=270, in=90] (0.5,-2);
		
		\draw[line width=1pt] (1.5,0) to (1.5,-2);
		
		\draw[line width=1pt] (0,-2) -- (0,-2.5)   
		(0.5,-2.5) -- (0.5,-2.0);
		
		\draw[line width=1pt] (1,-2) -- (1,-2.5) 
		(1.5,-2.5) -- (1.5,-2.0);
		
		\draw[line width=1pt,red] 
		(0,-2.5) -- (0.5,-2.5);
		\draw[line width=1pt, blue]
		(1,-2.5) -- (1.5,-2.5); 
		\filldraw[red] 
		(0,-2.5) circle (1pt) -- (0.5,-2.5) circle (1pt);
		\filldraw[blue]
		(1,-2.5) circle (1pt) -- (1.5,-2.5) circle (1pt); 
		
		\draw[line width=1pt] (-0.25,0) -- (1.75,0);
		
	\end{tikzpicture}
	\qquad
	\begin{tikzpicture}
		\clip(-.30,0.4) rectangle (1.80,-2.95);
		\draw[line width=1pt,red] 
		(0,-2.5) to [out=-30, in=210] (1,-2.5);
		\draw[line width=1pt, blue]
		(0.5,-2.5) to [out=30, in=150] (1.5,-2.5); 
		\draw[line width=3pt,white] (1,-2) -- (1,-2.4);
		
		\draw[line width=3pt, white] (0.5,-1) to [out=270, in=90] (0,-2);
		\draw[line width=1pt] (0.5,0) -- (0.5,-1) to [out=270, in=90] (0,-2);
		
		\draw[line width=3pt, white] (0,-1) to [out=270, in=90] (0.5,-2);    
		\draw[line width=1pt] (0,0) -- (0,-1) to [out=270, in=90] (0.5,-2);    
		
		\draw[line width=3pt, white] (1,0) to [out=270, in=90] (1,-2);
		\draw[line width=1pt] (1,0) to [out=270, in=90] (1,-2);
		
		\draw[line width=3pt, white] (1.5,0) to (1.5,-2);
		\draw[line width=1pt] (1.5,0) to (1.5,-2);
		
		\draw[line width=1pt] (0,-2) -- (0,-2.5)   
		(0.5,-2.5) -- (0.5,-2.0);
		
		\draw[line width=1pt] (1,-2) -- (1,-2.5)
		(1.5,-2.5) -- (1.5,-2.0);
		\filldraw[red] 
		(0,-2.5) circle (1pt) (1,-2.5) circle (1pt);
		\filldraw[blue]
		(0.5,-2.5) circle (1pt) (1.5,-2.5) circle (1pt); 
		
		\draw[line width=1pt] (-0.25,0) -- (1.75,0);

	\end{tikzpicture}
	\qquad
	\begin{tikzpicture}
		\clip(-.30,0.4) rectangle (1.80,-2.95);
		\draw[line width=1pt,red]
		(-0.25,-2.5) to [out=-90, in=230] (1,-2.5)
		(-0.25,-2.5) to [out=90, in=100] (0.5,-2.5);
		
		\draw[line width=1pt, blue]
		(0.0,-2.5) to [out=300, in=225] (0.75,-2.5)
		(0.75,-2.5) to [out=45, in=150] (1.5,-2.5); 
		\draw[line width=3pt,white] (1,-2) -- (1,-2.4);
		\draw[line width=3pt,white] (0,-2) -- (0,-2.4);

		\draw[line width=3pt, white] (0.5,0) to [out=270, in=90] (0.5,-2);
		\draw[line width=1pt] (0.5,0) to [out=270, in=90] (0.5,-2);
		
		\draw[line width=3pt, white] (0,0) to [out=270, in=90] (0,-2);    
		\draw[line width=1pt] (0,0) to [out=270, in=90] (0,-2);    
		
		\draw[line width=3pt, white] (1,0) to [out=270, in=90] (1,-2);
		\draw[line width=1pt] (1,0) to [out=270, in=90] (1,-2);
		
		\draw[line width=3pt, white] (1.5,0) to (1.5,-2);
		\draw[line width=1pt] (1.5,0) to (1.5,-2);
		
		\draw[line width=1pt] (0,-2) -- (0,-2.5)   
		(0.5,-2.5) -- (0.5,-2.0);
		
		\draw[line width=1pt] (1,-2) -- (1,-2.5)
		(1.5,-2.5) -- (1.5,-2.0);
		\filldraw[blue] 
		(0,-2.5) circle (1pt)
		(1.5,-2.5) circle (1pt); 
		\filldraw[red]
		(1,-2.5) circle (1pt)
		(0.5,-2.5) circle (1pt);

		\draw[line width=1pt] (-0.25,0) -- (1.75,0);
		
	\end{tikzpicture}
	\caption{From multicolored braiges to multicolored arc systems. From left to right the pictures show the process of	``combing straight'' the braid.}
	\label{fig:BMD-to-multi-arc-cplx}
\end{figure}
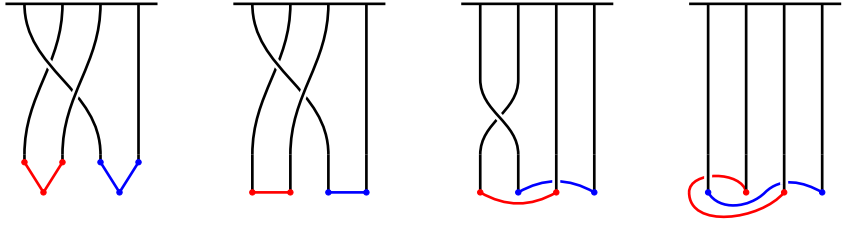

\begin{lemma}
	\label{lem:common_simplex_lemma}
	Let $\Gamma$ and $\Phi$ be simplices in $\match(sL_{n-1})$, such that $\Gamma$ has one edge and $\Phi$ has $e(\Phi)$ edges. Let $[(b, \Gamma)]$ and $[(c, \Phi)]$ be dangling elementary multicolored $n$-braiges. Suppose that their images under the map $\pi$ are contained in a simplex of $s\matcharc(K_n)$. Then there exists a simplex in $s\velbraigecpx_n$ that contains $[(b, \Gamma)]$ and $[(c, \Phi)]$.
\end{lemma}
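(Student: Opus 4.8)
The plan is to transport the hypothesis across the surjection $s\pi$, straighten the resulting arc system by a single element of $B_n$, and then recover a common simplex by reading the ``combing'' backwards.

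\emph{Setup.} Put $\alpha:=s\pi([(b,\Gamma)])$, a single colored arc, and let $\beta_1,\dots,\beta_{e(\Phi)}$ be the colored arcs making up $s\pi([(c,\Phi)])=(\Phi)c^{-1}$. By hypothesis $A:=\{\alpha\}\cup\{\beta_1,\dots,\beta_{e(\Phi)}\}$ spans a simplex of $s\matcharc(K_n)$; that is, $A$ is a system of colored arcs that are pairwise disjoint, including at their endpoints, and pairwise non-homotopic. I would dispose of the degenerate sub-case in which $\alpha$ already equals some $\beta_i$ (so that $[(b,\Gamma)]$ turns out to be a vertex of the simplex $[(c,\Phi)]$) separately at the end; assume now $A$ has $e(\Phi)+1$ arcs.

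\emph{Construction.} Recall from the discussion preceding the lemma that $B_n\cong\mathrm{MCG}(D_n)$ acts on $s\matcharc(K_n)$ on the right, that $\match(sL_{n-1})$ sits inside $s\matcharc(K_n)$ as the ``straight'' colored matchings, and that $s\pi$ is given by combing the braid straight and tracking where the merge-arcs go. By the change-of-coordinates principle for arcs on a surface (exactly as in the monocolored case of \cite{Bux16}), choose $d\in B_n$ with $(A)d=\Psi$ a straight colored matching, i.e.\ $\Psi\in\match(sL_{n-1})$, with $e(\Psi)=e(\Phi)+1$; write $e$ for the edge of $\Psi$ coming from $\alpha$, so that $(\Gamma)b^{-1}d=\{e\}$ and $(\Phi)c^{-1}d=\Psi\setminus\{e\}$ as colored arc systems. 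Then $[(d,\Psi)]$ is a simplex of $s\velbraigecpx_n$ with $s\pi([(d,\Psi)])=(\Psi)d^{-1}=A$, and I claim $[(b,\Gamma)]$ and $[(c,\Phi)]$ are faces of $[(d,\Psi)]$. Indeed the face $[(d,\{e\})]$ has $s\pi$-image $\alpha=s\pi([(b,\Gamma)])$, and the face $[(d,\Psi\setminus\{e\})]$ has $s\pi$-image $\{\beta_1,\dots,\beta_{e(\Phi)}\}=s\pi([(c,\Phi)])$; one then checks, by unwinding the combing, that $s\pi$ is injective enough on these particular faces to force $[(d,\{e\})]=[(b,\Gamma)]$ and $[(d,\Psi\setminus\{e\})]=[(c,\Phi)]$ — concretely, the two braids being compared in each case differ only by an element of the subgroup of $B_n$ absorbed by the relevant merges, namely cablings along the merged pairs together with half-twists inside them, which is exactly the ambiguity in a dangling very elementary braige. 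This gives the desired common simplex.

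\emph{Main obstacle.} The delicate step is precisely this last verification: that the single braid $d$ produced by straightening $A$ simultaneously realizes both given braiges as faces of $[(d,\Psi)]$. The map $s\pi$ is \emph{not} injective, so a braige carries more information than its arc-system image — in particular the positions along $sL_{n-1}$ at which its merges sit, and the ``non-absorbable'' part of the braid, are invisible after combing. One must therefore argue that, once $A$ has been straightened, the merge-data of $[(b,\Gamma)]$ and of $[(c,\Phi)]$ are already in the admissible configuration recorded by $\Psi$, i.e.\ that the required stabilizer-coset coincidences actually hold and that the new edge $e$ really can be inserted at a position of $sL_{n-1}$ disjoint from those of $\Phi\cong\Psi\setminus\{e\}$ so that $\Psi$ is a genuine simplex. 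This is where the disjointness hypothesis on the arc system does its essential work; the degenerate sub-case and the bookkeeping with dangling are routine by comparison.
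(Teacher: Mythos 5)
Your argument breaks down at exactly the step you flag as delicate, and the justification you offer there is false. Coincidence of $s\pi$-images does not pin down a dangling very elementary braige: the dangling ambiguity in $[(b,\Gamma)]$ consists only of multiplying $b$ on the right by braids cabled up through the merges (each merged pair moving as a parallel cable), and it does \emph{not} include half-twists of the two strands inside a merged pair. Such an internal half-twist stabilizes the corresponding arc, so it is invisible to $s\pi$, but it is not absorbed by the merge: split--half-twist--merge is a nontrivial element of $s\Vbr$ (this is exactly how copies of the braid groups sit inside $s\Vbr$, cf.\ Lemma \ref{lem:sX_br_vertex_stab}, and its image in $sV$ under forgetting the braiding is the nontrivial split--swap--merge element). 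Hence for the edge $\Gamma=\{1,2\}$ one has $[(\sigma_1,\Gamma)]\neq[(\id,\Gamma)]$ in $s\velbraigecpx_n$ even though both have the same image, which is precisely why $s\pi$ has large fibers and why Proposition \ref{prop:multicolored_fibers} is not vacuous. Consequently your construction fails as stated: the straightening braid $d$ is only determined up to the full stabilizer of the arc system $A$ in $B_n$, which is strictly larger than the subgroup accounted for by dangling. For instance with $n=4$, $b=c=\id$, $\Gamma=\{1,2\}$, $\Phi=\{3,4\}$, the braid $d=\sigma_1$ straightens $A$, yet $[(d,\{1,2\})]\neq[(b,\Gamma)]$, so $[(d,\Psi)]$ does not contain the given vertices. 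The same non-injectivity undermines your parenthetical treatment of the degenerate case $\alpha=\beta_i$ (it does not force $[(b,\Gamma)]$ to be a vertex of $[(c,\Phi)]$), and that case is in any event never actually handled.

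The missing idea, which is the real content of the lemma, is that the straightening must be chosen compatibly with \emph{both} given braiges rather than read off from the arc system alone. The paper's proof does this: first use the simplicial $B_n$-action on $s\velbraigecpx_n$ (braiding at the heads) to normalize $c=\id$ with $\Phi$ in standard position; then use only the genuine dangling freedom in $[(b,\Gamma)]$, together with the disjointness of the arc $\alpha$ from the straight arcs of $\Phi$, to choose a representative $b$ in which the strands merged by $\Phi$ run straight and parallel with no strand crossing between them. For that representative $[(b,\Phi)]=[(\id,\Phi)]$, and $[(b,\Gamma\cup\Phi)]$ is the desired common simplex. Your outline replaces this normalization by an appeal to ``$s\pi$ being injective enough on these faces,'' which cannot be salvaged; the proof would need to be rebuilt along the lines above.
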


\begin{proof}
	Assume that the simplex $[(b, \Gamma)]$ is not a face of $[(c, \Phi)]$. 
	
	There is a natural action of $B_n$ on $s\velbraigecpx_n$ by braiding the strands at the top, i.e. $b'[(c, \Phi')] = [(b'c', \Phi')]$. It follows that up to the action of $B_n$ without loss of generality we can assume $c = \id$, and $\Phi$ is a subgraph of $sL_{n-1}$ which has precisely $e(\Phi)$ edges connecting $j$ to $j+1$ for $j \in \{1, 3, \dots, 2e(\Phi) - 1\}$.
	
	Now there is a colored arc $\alpha$ representing $s\pi([(b, \Gamma)])$ that is disjoint from $\Phi$. Due to the disjointness, after dangling, we can assume the following: for each edge of $\Phi$, say with endpoints $j$ and $j+1$, $b$ represents a braid which the $j^{th}$ and $(j+1)^{st}$ strands run straight down, parallel to each other, and no strands cross between them. That is, $[(b, \Phi)] = [(\id, \Phi)]$. Thus $[(b, \Phi \cup \Gamma)]$ is a simplex in $s\velbraigecpx_n$ with $[(b, \Gamma)]$ and $[(\id, \Phi)]$ as faces.
\end{proof}

In order to prove the connectivity of $s\velbraigecpx_{n}$ we first need to use Lemma \ref{lem:common_simplex_lemma} to prove the fiber of any $k$-simplex is $(k-1)$-connected. The proof of this proposition is similar to the monocolored case from \cite{Bux16}.

\begin{proposition}
	\label{prop:multicolored_fibers}
	Let $\sigma$ be a $k$-simplex in $s\matcharc(K_n)$ with vertices $v_0, \dots, v_k$. Then \[(s\pi)^{-1}(\sigma) = \Bigjoin_{j=0}^k(s\pi)^{-1}(v_j).\] In particular $(s\pi)^{-1}(\sigma)$ is $(k-1)$-connected.
\end{proposition}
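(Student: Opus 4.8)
The plan is to prove the displayed equality of simplicial complexes and then read the connectivity off from the fact that a join of $k+1$ nonempty complexes is $(k-1)$-connected. Two structural facts about $s\pi$ are the starting point: it is simplicial and it preserves the dimension of every simplex (as was observed just after its definition). Consequently $s\pi$ sends vertices to vertices, and if two distinct vertices of $s\velbraigecpx_n$ spanned an edge, that edge could not map onto a single vertex; hence each fiber $(s\pi)^{-1}(v_j)$ is a discrete vertex set, and $(s\pi)^{-1}(\sigma)$ is exactly the full subcomplex of $s\velbraigecpx_n$ spanned by $\bigcup_{j=0}^{k}(s\pi)^{-1}(v_j)$.

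Granting this, the inclusion $(s\pi)^{-1}(\sigma)\subseteq\Bigjoin_{j=0}^k(s\pi)^{-1}(v_j)$ is immediate: a simplex of $(s\pi)^{-1}(\sigma)$ has all of its vertices in $\bigcup_j(s\pi)^{-1}(v_j)$, and by dimension preservation no two of them lie in the same fiber, which is precisely the combinatorial description of a simplex of the join.

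The reverse inclusion is the one that needs work. Let $J\subseteq\{0,\dots,k\}$ and choose $w_j=[(b_j,\Gamma_j)]\in(s\pi)^{-1}(v_j)$ for $j\in J$, where each $\Gamma_j$ is a single edge of $\match(sL_{n-1})$; I must show $\{w_j:j\in J\}$ spans a simplex of $s\velbraigecpx_n$. I would argue by induction on $|J|$, using Lemma \ref{lem:common_simplex_lemma}. Suppose a simplex on $\{w_j:j\in J'\}$ with $|J'|=m$ has been built, and write it as a dangling very elementary braige $[(c,\Phi)]$ with $\Phi$ having $m$ edges, so that $s\pi([(c,\Phi)])$ is the face of $\sigma$ on $\{v_j:j\in J'\}$. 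For the next index $j_0\notin J'$, the images $s\pi([(c,\Phi)])$ and $s\pi(w_{j_0})=v_{j_0}$ together lie in the face of $\sigma$ on $\{v_j:j\in J'\}\cup\{v_{j_0}\}$, hence in a common simplex of $s\matcharc(K_n)$; and $w_{j_0}$ is not a face of $[(c,\Phi)]$ because the fibers over distinct vertices are disjoint. Lemma \ref{lem:common_simplex_lemma} then produces a simplex of $s\velbraigecpx_n$ containing both $[(c,\Phi)]$ and $w_{j_0}$, so $\{w_j:j\in J'\}\cup\{w_{j_0}\}$ is a face of it, completing the induction.

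Finally, surjectivity of $s\pi$ gives that each $(s\pi)^{-1}(v_j)$ is nonempty, hence $(-1)$-connected, so $\Bigjoin_{j=0}^k(s\pi)^{-1}(v_j)$ is $(k-1)$-connected, and therefore so is $(s\pi)^{-1}(\sigma)$. I expect the only genuine obstacle to be the inductive step: one must be careful that at each stage the $s\pi$-images really do sit inside a single simplex of $s\matcharc(K_n)$ — which works here because they all lie in $\sigma$ — and that the newly adjoined braige is not already a face, so that Lemma \ref{lem:common_simplex_lemma} applies; everything else is formal manipulation of dimension-preserving simplicial maps together with the connectivity of joins.
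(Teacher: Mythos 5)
Your proposal is correct and takes essentially the same route as the paper: the inclusion $(s\pi)^{-1}(\sigma)\subseteq\Bigjoin_{j}(s\pi)^{-1}(v_j)$ via the fact that $s\pi$ preserves dimensions, and the reverse inclusion by induction, peeling off one vertex at a time and applying Lemma \ref{lem:common_simplex_lemma} to a vertex $[(b,\Gamma)]$ and a simplex $[(c,\Phi)]$ whose images lie in a face of $\sigma$. The final connectivity statement, from the join of $k+1$ nonempty complexes, is also exactly the paper's argument.
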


\begin{proof}
	The complexes on either side have the same vertex sets so we need to show that a finite set of vertices spans a simplex on the left if and only if it spans a simplex on the right.
	
	``$\subseteq$'': This inclusion is equivalent to saying that vertices in the fiber $(s\pi)^{-1}(\sigma)$ that are connected by an edge map to distinct vertices under $s\pi$. This follows from the construction of $s\pi$.
	
	``$\supseteq$'': We will proceed by induction. The 0-skeleton of $\Bigjoin_{j=0}^k(s\pi)^{-1}(\sigma)$ consists of vertices which are mapped to vertices under $s\pi$, so it is contained in the fiber $(s\pi)^{-1}(\sigma)$. Now assume that the same is true of the $l$-skeleton, for some $l \geq 0$. Let $\tau$ be an $(l+1)$-simplex in $\Bigjoin_{j=0}^k(s\pi)^{-1}(v_j)$. Then decompose $\tau$ as the join of a vertex $[(b, \Gamma)]$ and an $l$-simplex $[(c, \Phi)]$. By induction, these are both in the fiber $(s\pi)^{-1}(\tau)$ and by Lemma \ref{lem:common_simplex_lemma} share a simplex in $s\velbraigecpx_n$.  The minimal dimensional such simplex maps into $\sigma$ under $s\pi$, hence we have our desired result. 
	
	The consequence of the resulting equality is that $(s\pi)^{-1}(\sigma)$ is the join of $k+1$ nonempty things, which is $(k-1)$-connected.
\end{proof}

We will now prove the high connectivity of $s\velbraigecpx_n$. Recall that for $n \in \Z$, $\nu(n) = \floor{\frac{n+1}{3}} - 1$.

\begin{corollary}
	\label{cor:sVEn_conn}
	$s\velbraigecpx_n$ is $(\nu(n) - 1)$-connected. 
\end{corollary}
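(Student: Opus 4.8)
The plan is to run, essentially verbatim, the argument used to prove Theorem~\ref{thrm:multicolored_surface_matching_conn}, now applied to the simplicial surjection $s\pi\colon s\velbraigecpx_n \to s\matcharc(K_n)$ rather than to the color-forgetting map $\phi$. First I would barycentrically subdivide both complexes, which (exactly as in that proof) realizes them as geometric realizations of posets and $s\pi$ as a poset map, so that Quillen's fiber theorem \cite[Theorem 9.1]{quillen78} becomes available. We already know from Theorem~\ref{thrm:multicolored_surface_matching_conn} that the target $s\matcharc(K_n)$ is $(\nu(n)-1)$-connected, so it then suffices to verify, for each $k$-simplex $\sigma$ of $s\matcharc(K_n)$, that the link $\lk(\sigma)$ of $\sigma$ in $s\matcharc(K_n)$ is $(\nu(n)-k-2)$-connected and that the fiber $(s\pi)^{-1}(\sigma)$ is $(k-1)$-connected; since the join of a $(k-1)$-connected space and a $(\nu(n)-k-2)$-connected space is $(\nu(n)-1)$-connected, the fiber theorem then yields the conclusion.

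The fiber condition is already done: it is precisely Proposition~\ref{prop:multicolored_fibers}, which exhibits $(s\pi)^{-1}(\sigma)$ as the join of the $k+1$ nonempty fibers over the vertices of $\sigma$, hence $(k-1)$-connected. Here I also use that $s\pi$ is non-degenerate on simplices (the image simplex has the same dimension as the preimage simplex, as noted just after the definition of $s\pi$), so that $(s\pi)^{-1}(\sigma)$ really is the full preimage subcomplex to which Proposition~\ref{prop:multicolored_fibers} applies.

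For the link condition I would argue exactly as in the uncolored case of \cite{Bux16}. A $k$-simplex $\sigma$ of $s\matcharc(K_n)$ is a system of $k+1$ pairwise disjoint (including at endpoints) colored arcs touching $2(k+1)$ of the $n$ marked points; cutting the punctured disk $(S,P)$ along these arcs and deleting the marked points they use again produces a disk with $n-2(k+1)$ marked points, on which the colorings of the complementary arc systems are simply carried along. Hence $\lk(\sigma)\cong s\matcharc(K_{n-2k-2})$, which by Theorem~\ref{thrm:multicolored_surface_matching_conn} is $(\nu(n-2k-2)-1)$-connected; and the numerical inequality $\nu(n-2k-2)-1 \ge \nu(n)-k-2$ worked out in the proof of Theorem~\ref{thrm:multicolored_surface_matching_conn} completes this step. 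Feeding both estimates into \cite[Theorem 9.1]{quillen78} gives that $s\velbraigecpx_n$ is $(\nu(n)-1)$-connected.

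The step I expect to need the most care is the identification $\lk(\sigma)\cong s\matcharc(K_{n-2k-2})$: one must check that cutting a disk-with-marked-points along a disjoint colored arc system again produces a disk-with-marked-points (so that the link complex is of the same type), and that any two of the remaining marked points can still be joined by an arc of each color disjoint from $\sigma$, which is what keeps the ``complete graph'' model intact in the link. Both facts hold on a disk and are the colored analogues of what is used for $\matcharc(K_n)$ in \cite{Bux16}; they are the only genuinely geometric input, the rest being the bookkeeping already carried out for Theorem~\ref{thrm:multicolored_surface_matching_conn}.
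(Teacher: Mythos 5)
Your proposal is correct and follows essentially the same route as the paper: apply Quillen's fiber theorem \cite[Theorem 9.1]{quillen78} to $s\pi$, using Proposition~\ref{prop:multicolored_fibers} for the $(k-1)$-connectivity of fibers, the identification $\lk(\sigma)\cong s\matcharc(K_{n-2k-2})$ together with Theorem~\ref{thrm:multicolored_surface_matching_conn} for the link connectivity, and the inequality $\nu(n-2k-2)-1\ge\nu(n)-k-2$. The extra remarks you flag (barycentric subdivision, non-degeneracy of $s\pi$, cutting the disk along the arc system) are exactly the implicit bookkeeping the paper relies on, so no changes are needed.
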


\begin{proof}
	We wish to satisfy the conditions of \cite[Theorem 9.1]{quillen78}. We proved in Theorem \ref{thrm:multicolored_surface_matching_conn} that $s\matcharc(K_n)$ is $(\nu(n) - 1)$-connected. For any $k$-simplex $\sigma$ in $s\matcharc(K_n)$, we know $(s\pi)^{-1}(\sigma)$ is $(k-1)$-connected by Proposition \ref{prop:multicolored_fibers}. Moreover, we know the link $\lk(\sigma)$ is found by deleting $2k + 2$ arcs from $\sigma$, hence $2k + 2$ points from the set of punctures $P$. So there are $n - 2k - 2$ punctures left behind and $\lk(\sigma)$ is all the combinations of things we could do with the $n - 2k - 2$ punctures, hence $\lk(\sigma)$ is isomorphic to $s\matcharc(K_{n - 2k-2})$. Therefore we apply Theorem \ref{thrm:multicolored_surface_matching_conn} to conclude $\lk(\sigma)$ is $(\nu(n - 2k -2) - 1)$-connected. By the inequality 
	
	\begin{tabular}{lll}
	$\nu(n - 2k - 2)$ & $=$ & $\floor{\frac{(n - 2k - 2) + 1}{3}} -1$ \\
	& $\geq$ & $\floor{\frac{n + 1}{3}} + \floor{\frac{-2(k + 1)}{3}} - 1$ \\ 
	& $=$ & $\nu(n) + \floor{\frac{-2k - 1}{3}}$ \\
	& $\geq$ & $\nu(n) + \floor{\frac{-2k}{3}} + \floor{\frac{-1}{3}}$ \\
	& $\geq$ & $\nu(n) - k - 1$ \\
	\end{tabular}
	\\
	we have $\lk(\sigma)$ is the desired $(\nu(n) - k - 2)$-connected. Therefore we have satisfied the hypotheses of \cite[Theorem 9.1]{quillen78} and we can conclude that $s\velbraigecpx_n$ is $(\nu(n) - 1)$-connected.
\end{proof}

\subsection{Connectivity of $s\elbraigecpx_n$, hence connectivity of the descending links}

We will now build $s\elbraigecpx_n$ starting from $s\velbraigecpx_n$ using a Morse function on the elements of $s\elbraigecpx_n$ inspired by another Morse function in \cite{Fluch13} for $sV$. For the rest of this paper, given a braige $(b,F)$ we will refer to the trees $F$ as ``merges,'' and refer to ``very elementary merges'' to mean a merge with two leaves. Define $\mu_k$ to be the number of merges that have $k$ leaves. This forms a function $\mu \defeq (\mu_{2^s}, \mu_{2^s - 1}, \dots, \mu_{4}, \mu_{3})$ which we order lexicographically. Moreover, recall $f$ to denote the number of feet of the given $n$-braige as before. Finally, define $h \defeq (\mu, f)$ ordered lexicographically which satisfies being a Morse function as in \cite{Fluch13}. See Figure \ref{fig:morse_function_sVbraid} for an illustration using the Morse function described here.

\begin{figure}[t]
	\centering
		\begin{tikzpicture}[line width = 1.5pt, scale=0.8]
			\coordinate (a) at (0,0);
			\coordinate (b) at ($(a) + (1,0)$);
			\coordinate (c) at ($(a) + (2,0)$);
			\coordinate (d) at ($(a) + (3,0)$);
			\coordinate (e) at ($(a) + (4,0)$);
			\coordinate (f) at ($(a) + (5,0)$);
			\coordinate (g) at ($(a) + (6,0)$);
			\coordinate (h) at ($(a) + (7,0)$);
			\coordinate (i) at ($(a) + (8,0)$);
			\coordinate (j) at ($(a) + (0.5,-1)$);
			\coordinate (k) at ($(a) + (1,-2)$);
			\coordinate (l) at ($(a) + (3,-3)$);
			\coordinate (m) at ($(a) + (4.5,-1)$);
			\coordinate (n) at ($(a) + (5,-2)$);
			\coordinate (o) at ($(a) + (5.5,-3)$);
			\coordinate (p) at ($(a) + (1,-3)$);
			\coordinate (q) at ($(a) + (8, -3)$);
			
			\draw
			(d) -- (l)
			(k) -- (p)
			(i) -- (q);
			
			\draw[red]
			(a) -- (j) -- (b)
			(m) -- (n) -- (g)
			(n) -- (o) -- (h);
			
			\draw[blue]
			(j) -- (k) -- (c)
			(e) -- (m) -- (f);
			
			\filldraw[red]
			(a) circle (1pt)
			(b) circle (1pt)
			(g) circle (1pt)
			(h) circle (1pt)
			(j) circle (1pt)
			(n) circle (1pt)
			(o) circle (1pt);
			
			\filldraw[blue]
			(c) circle (1pt)
			(e) circle (1pt)
			(f) circle (1pt)
			(k) circle (1pt)
			(m) circle (1pt);
			
			\filldraw
			(d) circle (1pt)
			(i) circle (1pt)
			(l) circle (1pt)
			(p) circle (1pt)
			(q) circle (1pt);

		\end{tikzpicture}
	\caption{Given the element $z$ shown, $\mu(z) = (1, 1)$ and $f(z) = 4$.}
	\label{fig:morse_function_sVbraid}
\end{figure}
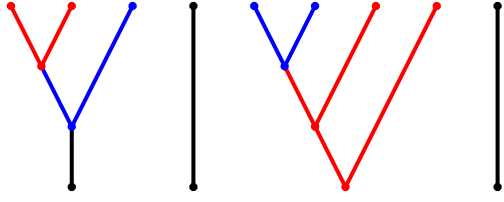

Fix $z$ to be an element of $s\elbraigecpx_n \backslash s\velbraigecpx_n$. We will analyze the descending link of $z$ that depends on this Morse function $h$ and denote it by $\lk_{\downarrow, h}(z)$. As we have a lexicographic order on $h$ there are two cases to consider: either $\mu$ decreases by adding legal splits to $z$ or $\mu$ stays the same and $f$ decreases by adding merges. We shall review these two cases for subcomplexes in this context. The full subcomplex spanned by vertices in the former case is called the \textit{(descending) split link} of $z$. The full subcomplex of vertices that satisfy the latter case form the \textit{(descending) merge link}. Consider two vertices $y$ and $w$ in the split link and merge link of $z$ respectively. By the definition of these subcomplexes, we already know $y$ is obtained from $z$ via splits and $w$ is obtained from $z$ via merges. See Figure \ref{fig:split_merge_links} for an illustration involving obtaining elements the split and merge links of a certain element.


\begin{figure}[t]
	\centering
		\begin{tikzpicture}[line width = 1.5pt]
			\begin{scope}[xshift=-3cm,yshift=1cm, scale=.6]
			\coordinate (a) at (0,0);
			\coordinate (b) at ($(a)+(1,0)$);
			\coordinate (c) at ($(a)+(2,0)$);
			\coordinate (d) at ($(a)+(3,0)$);
			\coordinate (e) at ($(a)+(4,0)$);
			\coordinate (f) at ($(a)+(5,0)$);
			\coordinate (g) at ($(a)+(0,-1)$);
			\coordinate (h) at ($(a)+(1,-1)$);
			\coordinate (i) at ($(a)+(2,-1)$);
			\coordinate (j) at ($(a)+(3,-1)$);
			\coordinate (k) at ($(a)+(0,-3)$);
			\coordinate (l) at ($(a)+(1,-3)$);
			\coordinate (m) at ($(a)+(2,-3)$);
			\coordinate (n) at ($(a)+(3,-3)$);
			\coordinate (o) at ($(a)+(0.5,-4)$);
			\coordinate (p) at ($(a)+(1,-5)$);
			\coordinate (q) at ($(a)+(1.5,-6)$);
			\coordinate (r) at ($(a)+(4,-6)$);
			\coordinate (s) at ($(a)+(5,-6)$);
			
			\draw
			 (a) -- (g)
			 (b) -- (h)
			 (c) -- (i)
			 (d) -- (n)
			 (e) -- (r)
			 (f) -- (s);
			 \draw[red]
			 (k) -- (o) -- (l)
			 (p) -- (q) -- (n);
			 \draw[blue]
			 (o) -- (p) -- (m);
			 
			 \draw
			 (g) to [out=-90, in=90] (l)
			 (h) to [out=-90, in=90] (m);
			 
			\draw[white, line width=3pt]
			(i) to [out=-90, in=90] (k);
			\draw
			(i) to [out=-90, in=90] (k);
			
			\filldraw
			(a) circle (1pt)
			(b) circle (1pt)
			(c) circle (1pt)
			(d) circle (1pt)
			(e) circle (1pt)
			(f) circle (1pt)
			(g) circle (1pt)
			(h) circle (1pt)
			(i) circle (1pt)
			(j) circle (1pt)
			(r) circle (1pt)
			(s) circle (1pt);
			\filldraw[red]
			(k) circle (1pt)
			(l) circle (1pt)
			(q) circle (1pt)
			(n) circle (1pt);
			\filldraw[blue]
			(m) circle (1pt)
			(o) circle (1pt)
			(p) circle (1pt);
		\end{scope}
		
		\node[rotate=25] at (1,0.25) {$\to$};
		\node at (1,0.75) {merge};
		\node[rotate=-25] at (1,-2) {$\to$};
		\node at (1,-2.5) {split};
	\begin{scope}[xshift=3cm, yshift=2.5cm, scale=.5]
		\coordinate (a) at (0,0);
		\coordinate (b) at ($(a)+(1,0)$);
		\coordinate (c) at ($(a)+(2,0)$);
		\coordinate (d) at ($(a)+(3,0)$);
		\coordinate (e) at ($(a)+(4,0)$);
		\coordinate (f) at ($(a)+(5,0)$);
		\coordinate (g) at ($(a)+(0,-1)$);
		\coordinate (h) at ($(a)+(1,-1)$);
		\coordinate (i) at ($(a)+(2,-1)$);
		\coordinate (j) at ($(a)+(3,-1)$);
		\coordinate (k) at ($(a)+(0,-3)$);
		\coordinate (l) at ($(a)+(1,-3)$);
		\coordinate (m) at ($(a)+(2,-3)$);
		\coordinate (n) at ($(a)+(3,-3)$);
		\coordinate (o) at ($(a)+(0.5,-4)$);
		\coordinate (p) at ($(a)+(1,-5)$);
		\coordinate (q) at ($(a)+(1.5,-6)$);
		\coordinate (r) at ($(a)+(4,-5)$);
		\coordinate (s) at ($(a)+(5,-5)$);
		\coordinate (t) at ($(a)+(4.5,-6)$);
		
		\draw
		(a) -- (g)
		(b) -- (h)
		(c) -- (i)
		(d) -- (n)
		(e) -- (r)
		(f) -- (s);
		\draw[red]
		(k) -- (o) -- (l)
		(p) -- (q) -- (n)
		(r) -- (t) -- (s);
		\draw[blue]
		(o) -- (p) -- (m);
		
		\draw
		(g) to [out=-90, in=90] (l)
		(h) to [out=-90, in=90] (m);
		
		\draw[white, line width=3pt]
		(i) to [out=-90, in=90] (k);
		\draw
		(i) to [out=-90, in=90] (k);
		
		\filldraw
		(a) circle (1pt)
		(b) circle (1pt)
		(c) circle (1pt)
		(d) circle (1pt)
		(e) circle (1pt)
		(f) circle (1pt)
		(g) circle (1pt)
		(h) circle (1pt)
		(i) circle (1pt)
		(j) circle (1pt);
		
		\filldraw[red]
		(k) circle (1pt)
		(l) circle (1pt)
		(q) circle (1pt)
		(n) circle (1pt)
		(r) circle (1pt)
		(s) circle (1pt)
		(t) circle (1pt);
		
		\filldraw[blue]
		(m) circle (1pt)
		(o) circle (1pt)
		(p) circle (1pt);
	\end{scope}

	\begin{scope}[xshift=3cm, yshift=-1.5cm, scale=.5]
		\coordinate (a) at (0,0);
		\coordinate (b) at ($(a)+(1,0)$);
		\coordinate (c) at ($(a)+(2,0)$);
		\coordinate (e) at ($(a)+(3,0)$);
		\coordinate (f) at ($(a)+(4,0)$);
		\coordinate (g) at ($(a)+(0,-1)$);
		\coordinate (h) at ($(a)+(1,-1)$);
		\coordinate (i) at ($(a)+(2,-1)$);
		\coordinate (k) at ($(a)+(0,-3)$);
		\coordinate (l) at ($(a)+(1,-3)$);
		\coordinate (m) at ($(a)+(2,-3)$);
		\coordinate (o) at ($(a)+(0.5,-4)$);
		\coordinate (p) at ($(a)+(1,-5)$);
		\coordinate (r) at ($(a)+(3,-5)$);
		\coordinate (s) at ($(a)+(4,-5)$);
		
		\coordinate (y) at ($(a)+(5,0)$);
		\coordinate (z) at ($(a)+(5,-5)$);
		
		\draw
		(a) -- (g)
		(b) -- (h)
		(c) -- (i)
		(e) -- (r)
		(f) -- (s)
		(y) -- (z);
		\draw[red]
		(k) -- (o) -- (l);
		\draw[blue]
		(o) -- (p) -- (m);
		
		\draw
		(g) to [out=-90, in=90] (l)
		(h) to [out=-90, in=90] (m);
		
		\draw[white, line width=3pt]
		(i) to [out=-90, in=90] (k);
		\draw
		(i) to [out=-90, in=90] (k);
		
		\filldraw
		(a) circle (1pt)
		(b) circle (1pt)
		(c) circle (1pt)
		(e) circle (1pt)
		(f) circle (1pt)
		(g) circle (1pt)
		(h) circle (1pt)
		(i) circle (1pt)
		(r) circle (1pt)
		(s) circle (1pt)
		(y) circle (1pt)
		(z) circle (1pt);
		\filldraw[red]
		(k) circle (1pt)
		(l) circle (1pt)
		(q) circle (1pt);
		\filldraw[blue]
		(m) circle (1pt)
		(o) circle (1pt)
		(p) circle (1pt);
		
	\end{scope}
	\end{tikzpicture}
	
	\caption{Here is an example of an element (left), an element of its merge-link (top-right), and an element of its split-link (bottom-right).}
	\label{fig:split_merge_links}
\end{figure}
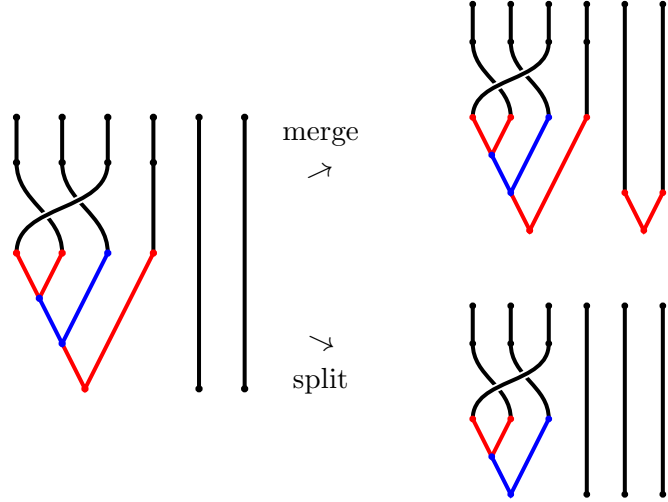

The following provides two propositions which show the importance of the two subcomplexes of $\lk_{\downarrow, h}(x)$ where $x \in s\elbraigecpx_n$.

\begin{proposition}
	Let $x$ and $y$ be vertices in $s\elbraigecpx_n$ with $x < y$. Then $\mu(x) \geq \mu(y)$ and $f(x) < f(y)$. Moreover, $h(x) < h(y)$ if and only if $\mu(x) = \mu(y)$, and $h(x) > h(y)$ if and only if $\mu(x) > \mu(y)$. 
\end{proposition}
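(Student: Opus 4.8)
The plan is to reduce the whole statement to one structural fact about how the merge forest of a braige changes under the relation $<$, and then to finish by elementary lexicographic bookkeeping. The first thing I would do is unwind what $x<y$ means for vertices of $s\elbraigecpx_n$. Since $y$ lies above $x$ in $s\Poset$, it is obtained from $x$ by an elementary splitting; since $x$ and $y$ are both braiges, all of the splits that are introduced must cancel against merges of $x$ (using the merge-then-split reduction move of Figure \ref{fig:reduction_moves}). Choosing representatives $x=[(b,G_x)]$, $y=[(c,G_y)]$, this says precisely that $G_y$ is obtained from $G_x$ by cutting each tree $T$ of $G_x$ into a sub-forest of its upper (leaf-side) part — equivalently, $G_x$ is $G_y$ with some of its roots merged back together — while $b$ is replaced by $b$ with the corresponding strands doubled. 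Any applications of the braided cross relation that intervene leave the multiset of tree sizes of the merge forest and the number of feet unchanged, so $\mu$ and $f$ are well defined on equivalence classes and I may reason with these representatives.

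Given this description, $f(x)<f(y)$ is immediate: a non-trivial splitting strictly increases the number of feet (each deleted root caret turns one root of $G_x$ into two roots of $G_y$), which is exactly the monotonicity already used to make $f$ a Morse function on $sX_{br}$.

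The heart of the argument is the comparison of $\mu$-values. I would track what the cutting does to the multiset of tree sizes. A tree $T$ of $G_x$ of size $k\le 2^s$ is cut into pieces of sizes $k_1,\dots,k_r$ with $\sum_j k_j=k$ and $r\ge1$, and when $r\ge2$ each $k_j\le k-1$; thus the operation only ever replaces a tree by strictly smaller trees and never enlarges or combines trees. Let $M$ be the largest size $\ge3$ of a tree of $G_x$ that is genuinely cut (leave $M$ undefined if there is none). If $M$ is undefined, no tree of size $\ge3$ is touched and no new tree of size $\ge3$ can appear, so $\mu_k(x)=\mu_k(y)$ for all $k\ge3$ and $\mu(x)=\mu(y)$. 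If $M$ is defined, then for every $k>M$ no tree of size $k$ is cut (such a tree would have size $\ge3$, contradicting maximality of $M$) and no piece can have size $k$ (every cut tree has size $\le M<k$, and its pieces are strictly smaller still), so $\mu_k$ is unchanged; at level $M$ there is at least one $-1$ contribution and no $+1$ contribution, so $\mu_M$ strictly decreases. Scanning $\mu=(\mu_{2^s},\dots,\mu_3)$ from the most significant coordinate, the first discrepancy is then a strict decrease at index $M$. In all cases $\mu(x)\ge\mu(y)$, with $\mu(x)=\mu(y)$ exactly when no tree of size $\ge3$ is cut and $\mu(x)>\mu(y)$ otherwise.

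Finally I would assemble the ``moreover'' purely from the lexicographic order on $h=(\mu,f)$: given $\mu(x)\ge\mu(y)$ and $f(x)<f(y)$, if $\mu(x)=\mu(y)$ the comparison is decided by the $f$-coordinate, so $h(x)<h(y)$, whereas if $\mu(x)>\mu(y)$ the $\mu$-coordinate already forces $h(x)>h(y)$ regardless of $f$; these two alternatives are exhaustive and mutually exclusive, which yields both biconditionals simultaneously. I expect the only genuinely delicate point to be the very first step — making precise that $<$ between two braiges is exactly ``cut the merge forest toward its roots'' and checking that the braided cross relation does not disturb this picture — so the bulk of the write-up would be devoted to that, with the rest being routine.
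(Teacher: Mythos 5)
Your proposal is correct and follows essentially the same route as the paper's proof: a nontrivial splitting between two braiges cuts trees of the merge forest, which forces $f$ to increase strictly and $\mu$ to be lexicographically non-increasing (strictly decreasing exactly when a merge with at least three leaves is broken), after which the biconditionals are pure lexicographic bookkeeping on $h=(\mu,f)$. Your write-up is simply more explicit than the paper's at two points — the structural identification of $<$ with root-side cutting of the merge forest (and its invariance under dangling and the braided cross relation) and the argument via the maximal cut size $M$ — but the underlying argument is the same.
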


\begin{proof}
	Observe that the inequality $x < y$ means that $y$ is obtained from $x$ via a nontrivial splitting, which implies the  ``feet function'' $f$ will have the desired inequality $f(x) < f(y)$. In addition, adding splits to $x$ is equivalent to $\mu_k$ changing in a decreasing way for some $k$. That is, if adding a split breaks up a merge with $k$ leaves then $\mu_k$ decreases, and if that is the first $\mu_k$ where $x$ and $y$ differ we get $\mu(x) > \mu(y)$. Otherwise we get an equality.
	
	Moreover, $h = (\mu, f)$ is ordered lexicographically. As we showed, $f(x) < f(y)$ and $\mu(x) \geq \mu(y)$. So $h(x) < h(y)$ if and only if  $\mu(x) = \mu(y)$.  Due to the lexicographic order and our previous observations, $h(x) > h(y)$ if and only if $\mu(x) > \mu(y)$.
\end{proof}

\begin{proposition}
	The descending link $\lk_{\downarrow, h}(z)$ is the join of the split link and merge link.
\end{proposition}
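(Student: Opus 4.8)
The plan is to recognize $\lk_{\downarrow,h}(z)$ as the realization of an explicit poset that turns out to be an \emph{ordinal sum}, so that its order complex is automatically a simplicial join. Recall that $\lk_{\downarrow,h}(z)$ is the realization of the poset $Q$ of all vertices $y$ of $s\elbraigecpx_n$ with $y\neq z$, $y$ comparable to $z$, and $h(y)<h(z)$, ordered by the restriction of $\le$ (cf.\ the discussion of descending links after the Morse Lemma). Since every element of $Q$ is comparable to and distinct from $z$, we obtain a partition $Q = Q_{<}\sqcup Q_{>}$ with $Q_{<}=\{y\in Q : y<z\}$ and $Q_{>}=\{y\in Q : y>z\}$.

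The first step is to identify $Q_{>}$ with the vertex set of the split link and $Q_{<}$ with that of the merge link. If $y>z$ then $y$ is obtained from $z$ by adding splits; by the preceding proposition $h(y)<h(z)$ holds exactly when $\mu(y)<\mu(z)$, so $Q_{>}$ is precisely the set of vertices at which ``$\mu$ decreases by adding legal splits to $z$''. Dually, if $y<z$ then $z$ is obtained from $y$ by a splitting, i.e.\ $y$ is obtained from $z$ by adding merges, whence $f(y)<f(z)$; again by the preceding proposition $h(y)<h(z)$ holds exactly when $\mu(y)=\mu(z)$, so $Q_{<}$ is precisely the set of vertices at which ``$\mu$ stays the same and $f$ decreases by adding merges''. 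Since a full subcomplex of an order complex is the order complex of the corresponding subposet, $|Q_{>}|$ is the split link and $|Q_{<}|$ is the merge link, both directly from their definitions as full subcomplexes of $\lk_{\downarrow,h}(z)$.

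The second step is the order-theoretic observation that $Q$ is the ordinal sum $Q_{<}\star Q_{>}$: if $y\in Q_{<}$ and $y'\in Q_{>}$, then $y<z<y'$ forces $y<y'$ by transitivity, while no element of $Q_{>}$ lies below any element of $Q_{<}$; hence every element of $Q_{<}$ precedes every element of $Q_{>}$ and $Q$ carries no other relations across the two blocks. As the order complex of an ordinal sum of posets is the simplicial join of the order complexes of the summands, we conclude
\[
\lk_{\downarrow,h}(z)=|Q|=|Q_{<}\star Q_{>}|=|Q_{<}|*|Q_{>}|=(\text{merge link})*(\text{split link}),
\]
which is the assertion (the join being commutative up to homeomorphism). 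I do not expect a substantive obstacle: the only step demanding care is the first one, where one must verify that the informal descriptions defining the split link and the merge link coincide exactly with the comparability-and-height conditions cutting out $Q_{>}$ and $Q_{<}$ — and this is precisely what the preceding proposition delivers, after which everything reduces to the formal fact that the order complex of an ordinal sum is a join.
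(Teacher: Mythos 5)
Your proposal is correct and follows essentially the same route as the paper: the preceding proposition yields the partition of the descending vertices into split-link and merge-link vertices, and your transitivity step $y<z<y'\Rightarrow y<y'$ is exactly the paper's observation that any split of $z$ is also a split of any merge of $z$, so every simplex of one subcomplex is joined to every simplex of the other. The only difference is cosmetic: you phrase the conclusion via the fact that the order complex of an ordinal sum of posets is the join of the order complexes, which is a slightly more formal rendering of the same argument.
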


\begin{proof}
	Observe that every vertex of $\lk_{\downarrow, h}(z)$ is in either the split link or the merge link, but not both due to the previous proposition. Moreover, since any split of $z$ is also a split of any merge of $z$, then every simplex in the one subcomplex is joined to every simplex in the other subcomplex. 
\end{proof}

With this set up, we can now prove the connectivity of $s\elbraigecpx_n$. The following two lemmas will help do just that.

\begin{lemma}
	\label{lem:desc_morse_link_contra}
	When $z$ has a very elementary merge (so $\mu_2(z)$ is nonzero), we have the split link is contractible and so $\lk_{\downarrow, h}(z)$ is contractible.
\end{lemma}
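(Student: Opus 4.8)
The plan is to leverage the decomposition proved in the preceding proposition, namely $\lk_{\downarrow,h}(z)=(\text{split link of }z)\ast(\text{merge link of }z)$. Since the join of a contractible complex with an arbitrary complex (including the empty one) is contractible, it suffices to show that the \emph{split link} of $z$ is contractible, and all of the work goes into this.

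First I would fix the combinatorial model of the split link. A vertex of the split link is obtained from $z$ by adding splits so that $\mu$ strictly decreases; since $z$ is a braige (no splits above its feet), adding a split which keeps us inside $s\elbraigecpx_n$ amounts to cancelling the bottom caret of one of the merge-trees of $z$, i.e.\ ``peeling'' a merge, and $\mu$ drops exactly when the peeled merge had at least three leaves. So the split link is the realization of the poset of partial peelings of the merges of $z$, subject to (i) at least one merge with $\ge 3$ leaves being peeled (so $\mu$ goes down) and (ii) at least one merge surviving (so the result is still an honest vertex of $s\elbraigecpx_n$). As the paper already does for $s\matcharc(K_n)$, I would freely pass to barycentric subdivisions so that the relevant objects are order complexes of posets.

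Now I would use the hypothesis $\mu_2(z)\neq 0$. Fix a very elementary merge $\mathfrak{m}$ of $z$ (a single caret on two strands of $z$) and define a self-map $\rho$ of the split-link poset by ``always keeping $\mathfrak{m}$'': if a vertex $w$ has peeled $\mathfrak{m}$ away, let $\rho(w)$ be $w$ with the caret $\mathfrak{m}$ re-inserted on its two (now bare) strands, and $\rho(w)=w$ otherwise. The key checks are: $\rho(w)$ again lies in the split link (re-inserting a $2$-leaf merge does not change $\mu$, keeps the forest elementary, and only enlarges it, so $z\le\rho(w)\le w$ in the Stein order); $\rho$ is order-preserving and idempotent; and $\rho(w)\le w$ for all $w$. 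Hence $\rho$ is a descending closure operator, and by the standard poset argument (cf.\ \cite{quillen78}) the split link deformation retracts onto its image, the subposet $\rho(\text{split link})$ of vertices retaining $\mathfrak{m}$. That subposet has a \emph{maximum}: the vertex $z_{\mathfrak{m}}$ obtained from $z$ by peeling away every merge except $\mathfrak{m}$. Indeed $z_{\mathfrak{m}}$ is in the split link because $\mu(z_{\mathfrak{m}})=0<\mu(z)$ — here we use $z\notin s\velbraigecpx_n$, so $\mu(z)\neq 0$ — and it retains $\mathfrak{m}$; and any vertex retaining $\mathfrak{m}$ is $z_{\mathfrak{m}}$ with some merges re-installed, hence lies below $z_{\mathfrak{m}}$. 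A poset with a maximum has contractible order complex, so $\rho(\text{split link})$ is contractible, whence the split link is contractible and $\lk_{\downarrow,h}(z)$ is contractible.

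The step I expect to be genuinely fiddly — the place where care is needed — is the bookkeeping around $\rho$: making precise, modulo the dangling equivalence and in the presence of the braid, what ``the two strands of $\mathfrak{m}$'' and ``re-inserting $\mathfrak{m}$'' mean; verifying that a partially peeled braige together with a re-inserted very elementary merge is still an \emph{elementary} braige (so we never leave $s\elbraigecpx_n$); and checking that $\rho$, and the straight-line homotopy realizing the retraction, respect the \emph{elementary-simplex} face relation of the Stein complex rather than merely the underlying comparability poset (i.e.\ endpoints $x_0\prec x_k$ of chains are preserved). None of this is deep, but it is exactly where a sloppy treatment of the model would break down; the conceptual skeleton (join decomposition, closure-operator retraction, cone on a maximum) is purely formal.
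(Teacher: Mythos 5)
Your proposal is correct and takes essentially the same route as the paper: both arguments contract the split link conically, via an order-preserving ``un-split the very elementary merge(s)'' map $w\mapsto\rho(w)\le w$ whose image lies below one fixed vertex of the split link, and then invoke the join decomposition of $\lk_{\downarrow,h}(z)$. The only difference is cosmetic: the paper re-inserts \emph{all} very elementary merges and cones to the vertex retaining all of them while splitting everything else, whereas you fix a single very elementary merge $\mathfrak{m}$ and cone to the vertex retaining only $\mathfrak{m}$.
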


\begin{proof}
	Let $y$ be a vertex of the split link and so in $s\elbraigecpx_n$. Since $y$ is in the split link, we know $y$ is obtained from $z$ via (possibly very elementary) splittings. Let $y_0$ be obtained from $z$ via doing all the same splittings as $y$ except not splitting any very elementary merges. Thus $y \geq y_0$ as $y$ can also be obtained from $y_0$ via splitting some very elementary merge(s). As $y_0$ is obtained by nontrivial splittings of $z$, we have $y_0 \geq z$. Moreover, we have $\mu(y_0) < \mu(z)$, thus $y_0$ is in the split link of $z$. Now let $z_0$ be the maximal element of the split link of $z$ obtained by not splitting any very elementary merges of $z$. Observe that $\mu_2(z)$ is nonzero, so $z_0$ is still in $s\elbraigecpx_n$. Due to $z_0$ being a maximal element, $y_0 \leq z_0$ for all $y$ in the split link. Thus $y \geq y_0 \leq z_0$. This provides a contraction of the split link of $z$. 
\end{proof}

Given a merge of $z$, only certain single-caret splittings can be applied to it to keep it a braige (not merely a spraige). For example, if $z$ has a merge consisting of a red caret and a blue caret under it, we can apply a single blue single-caret splitting, however, any other splits would no longer result in a braige. Call a merge a \emph{bottleneck merge} if it admits one color of single-caret splitting that yields a braige. 
 
\begin{lemma}
	\label{lem:bottleneck_merge}
	When $z$ has no very elementary merge (so $\mu_2(z) = 0$), but does have a bottleneck merge, then the split link is contractible, and so $\lk_{\downarrow, h}(z)$ is contractible.
\end{lemma}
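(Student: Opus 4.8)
The plan is to imitate the contraction used in Lemma \ref{lem:desc_morse_link_contra}, but pushed one level further down: since there is no very elementary merge, we cannot split merges all the way down in a single step, but a bottleneck merge gives us a canonical ``first split'' that strictly decreases $\mu$ and produces a new merge which \emph{is} very elementary. So first I would single out the set $\mathcal{B}$ of bottleneck merges of $z$, and for each such merge $M$ let $c(M)$ be its unique admissible single-caret split color. Let $z_1$ be the element of the split link obtained from $z$ by performing the single-caret split of color $c(M)$ on every bottleneck merge $M$ simultaneously. One checks this is legal (each such split individually keeps $(b,F)$ a braige, and they are performed on disjoint merges so there is no interference) and that $\mu(z_1)<\mu(z)$ strictly — indeed some $\mu_k$ with $k>2$ drops — so $z_1$ really lies in the split link, and moreover $z_1$ now has a very elementary merge (each bottleneck merge, after removing its top caret, contributes a caret all of whose leaves are genuine feet: that is the content of ``bottleneck'').

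Next I would set up the zig-zag connecting an arbitrary vertex $y$ of the split link to $z_1$. Given $y$ in the split link, $y$ is obtained from $z$ by splittings; let $y_0$ be obtained from $z$ by performing exactly those of the splittings of $y$ that lie above bottleneck merges, doing them in the ``canonical'' order — first the bottleneck single-caret split $c(M)$, then whatever $y$ did below it — and let $y'$ be obtained from $z$ by performing only the bottleneck single-caret splits $c(M)$ (i.e. $y'=z_1$ when $y$ actually split \emph{something} above each bottleneck merge; more carefully one should take $y'$ to be $y_0$ with all the sub-bottleneck splits undone). The point is to exhibit
\[
y \;\geq\; y_0 \;\leq\; z_1
\]
in the split link, which by the standard poset argument (as in \cite{Fluch13} and in the proof of Lemma \ref{lem:desc_morse_link_contra}) contracts the split link onto $z_1$, hence onto a point. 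The first inequality holds because $y$ is obtained from $y_0$ by further splittings (the ones $y$ performed that are \emph{not} above a bottleneck merge are still available from $y_0$); the second because $z_1=y_0$-restricted-to-the-bottleneck-carets is obtained from $y_0$ by forgetting the sub-bottleneck splittings, i.e.\ $z_1\le y_0$. Both $y_0$ and $z_1$ have $\mu<\mu(z)$, so they genuinely lie in the split link. Finally, since by the preceding proposition $\lk_{\downarrow,h}(z)$ is the join of the split link with the merge link and the join with a contractible (in particular nonempty) complex is contractible, the contractibility of the split link forces $\lk_{\downarrow,h}(z)$ to be contractible as well.

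The main obstacle I expect is purely bookkeeping: making precise the decomposition of an arbitrary splitting $y$ of $z$ into its ``bottleneck part'' and its ``remainder'', and verifying that the remainder splittings are still legal (still yield braiges, not merely spraiges) when performed starting from $y_0$ rather than from $z$ — this is where the hypothesis $\mu_2(z)=0$ combined with ``bottleneck'' is doing real work, since it guarantees that the only splitting $y$ could have done above such a merge, to stay in the split link of an element of $s\elbraigecpx_n$, begins with exactly the color $c(M)$. Once that compatibility is pinned down, the two inequalities $y\ge y_0\le z_1$ and the resulting conical contraction are routine, exactly parallel to Lemma \ref{lem:desc_morse_link_contra}.
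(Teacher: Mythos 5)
Your overall strategy -- coning the split link off onto the vertex obtained by performing the canonical single-caret split(s) at the bottleneck merge(s) -- is the same as the paper's, but the zig-zag you write down does not work as stated. You define $y_0$ as the ``bottleneck part'' of $y$ (only those splittings of $y$ occurring at bottleneck merges) and display $y \ge y_0 \le z_1$. First, $y_0$ and $z_1$ need not be comparable in either direction: if $y$ splits some bottleneck merge $M$ beyond the single caret $c(M)$ (which is legal -- after the root split the leftover very elementary merge may be split as well, and this strictly decreases $\mu$), then $y_0$ does more than $z_1$ at $M$, while at a bottleneck merge $y$ never touches, $y_0$ does nothing and $z_1$ does the split; indeed your own justification of the second inequality argues $z_1 \le y_0$, the reverse of what you display, and even that fails in the untouched-merge situation. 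Second, and more seriously, $y_0$ (and likewise your repaired $y'$) can equal $z$ itself: for $s \ge 2$ the element $z$ may have a non-bottleneck merge with at least three leaves (e.g.\ a full two-color merge) alongside its bottleneck merges, and a vertex $y$ of the split link that splits only that merge touches no bottleneck merge, so $y_0 = z$, which is not a vertex of the split link at all; your proposed retraction is simply undefined there.

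The repair is to run the map upward rather than downward, which is exactly what the paper does: fix a single bottleneck merge and send $y$ to $y_0 :=$ $y$ together with the unique legal single-caret split of that merge. The bottleneck property guarantees that if $y$ splits that merge at all, its splitting must begin with precisely this caret, so in that case $y_0 = y$; hence the assignment is well defined, monotone, satisfies $y \le y_0$, and satisfies $w \le y_0$ where $w$ is obtained from $z$ by that one split alone, while $\mu(y_0) \le \mu(y) < \mu(z)$ keeps $y_0$ in the split link. The zig-zag $y \le y_0 \ge w$ then contracts the split link onto $w$. (Splitting \emph{all} bottleneck merges, i.e.\ sending $y$ to $y$ together with the canonical split at every bottleneck merge, also works and recovers your $z_1$, but one merge suffices and avoids the bookkeeping you were worried about.) Your concluding step -- the descending link is the join of the split link with the merge link, so contractibility of the former gives contractibility of $\lk_{\downarrow,h}(z)$ -- is fine.
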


\begin{proof}
	The proof of this lemma is similar to the previous one. Let $y$ be in the split link of $z$. Let $y_0$ be obtained from $y$ as follows: if $y$ involves splitting the bottleneck merge, then $y_0 = y$, and if $y$ does not involve splitting the bottleneck merge, then $y_0$ is $y$ except we additionally do the one legal single-caret splitting to the bottleneck. So far we have $z < y \leq y_0$ and $\mu(y_0) < \mu(z)$, as $y$ is obtained from $z$ via a nontrivial splitting and $y_0$ is obtained via $y$ through another (possibly trivial) splitting. So $y_0$ still lies in the split link of $z$. Now let $w$ be the element of the split link of $z$ obtained by just doing the one legal single-caret splitting to the bottleneck merge, and nothing else. Clearly $w \leq y_0$ due to the second case described previously, and due to the bottleneck property in fact we also have $w \leq y_0$ in the first scenario. Hence we have $w \leq y_0 \geq y$, which provides a contraction of the split link of $z$. 
\end{proof}

In the process of proving our main result, it was realized that there is a mistake in the proof of Lemma 3.8 of \cite{Fluch13}, namely the split link is not a join since it is missing the maximal simplex given by undoing every merge. That is, the split link in Lemma 3.8 of \cite{Fluch13} is the join of $k$ nonempty complexes with a simplex removed is not immediately $(k-1)$-connected, so we have the following lemma which proves this. In particular this fixes Lemma 3.8 of \cite{Fluch13}.

\begin{lemma}
	\label{lem:remove_simplex_connectivity}
	Let $S_1, \dots, S_k$ be 0-dimensional complexes. Let $X = S_1 \ast \cdots \ast S_k$, so $X$ is $(k-2)$-connected. Let $\Delta$ be a (maximal) $(k-1)$-simplex of $X$, say with vertices $s_i \in S_i$ for $1 \leq i \leq k$. Let $Y$ be $X$ with the interior of $\Delta$ removed. Assume that the inclusion $\partial \Delta \to Y$ is nullhomotopic. Then $Y$ is $(k-2)$-connected.
\end{lemma}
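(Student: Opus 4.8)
The plan is to identify $X$ as the space obtained from $Y$ by attaching a single $(k-1)$-cell along the inclusion $\partial\Delta\hookrightarrow Y$, and then, using the hypothesis, to recognize $X$ up to homotopy as a wedge $Y\vee S^{k-1}$. Once this is done the conclusion drops out of a one-line retraction argument.

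\emph{Step 1: $X$ is $Y$ with one $(k-1)$-cell attached.} The join $X=S_1\ast\cdots\ast S_k$ is a simplicial complex of dimension $k-1$, and $\Delta$, being spanned by one vertex from each $S_i$, is a top-dimensional simplex. Hence the open simplex $\mathrm{int}\,\Delta$ is open in $X$, so $Y=X\smallsetminus\mathrm{int}\,\Delta$ is a closed subcomplex: it is the union of all closed simplices of $X$ other than $\Delta$, and in particular it contains $\partial\Delta$. Thus $X=Y\cup\overline{\Delta}$ with $Y\cap\overline{\Delta}=\partial\Delta$, and the relative chain complex shows $X/Y\cong\overline{\Delta}/\partial\Delta\cong S^{k-1}$. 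Identifying $\overline{\Delta}\cong D^{k-1}$ via the characteristic map of $\Delta$, we get $X=Y\cup_{\varphi}e^{k-1}$, where the attaching map $\varphi$ is the inclusion $\partial\Delta\hookrightarrow Y$ composed with a homeomorphism $S^{k-2}\xrightarrow{\;\cong\;}\partial\Delta$; in particular $\varphi$ is nullhomotopic precisely when the inclusion $\partial\Delta\hookrightarrow Y$ is, which is our hypothesis.

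\emph{Step 2: collapsing the cell.} Since $\varphi$ is nullhomotopic it is homotopic to a constant map, and because $Y$ is a CW complex the homotopy type of the mapping cone $Y\cup_{\varphi}e^{k-1}$ depends only on the homotopy class of $\varphi$; attaching a cell along a constant map produces a wedge, so $X\simeq Y\cup_{\mathrm{const}}e^{k-1}=Y\vee S^{k-1}$ (basepoint in the component of $Y$ hit by $\varphi$). Now let $r\colon Y\vee S^{k-1}\to Y$ be the identity on $Y$ and constant on $S^{k-1}$; it retracts the inclusion $j\colon Y\hookrightarrow Y\vee S^{k-1}$, so $j_{*}\colon\pi_i(Y)\to\pi_i(Y\vee S^{k-1})\cong\pi_i(X)$ is split injective for every $i$. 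Since $X$ is $(k-2)$-connected we obtain $\pi_i(Y)=0$ for all $i\le k-2$ (the case $i=0$ gives that $Y$ is connected, after which the higher groups are basepoint-independent), i.e.\ $Y$ is $(k-2)$-connected. For $k=1$ the statement merely asserts $Y\ne\varnothing$, which is exactly what the hypothesis on $\partial\Delta=\varnothing$ encodes; for $k\ge 2$ Steps 1–2 apply verbatim (the case $k=2$ being the statement that $X$ connected with $s_1,s_2$ in one component of $Y$ forces $Y$ connected).

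I expect the only point requiring care to be Step 1: verifying cleanly that $Y$ is literally a CW-subcomplex of $X$ with $X/Y\cong S^{k-1}$, and that the attaching map of the resulting cell is the stated inclusion $\partial\Delta\hookrightarrow Y$. After that, Step 2 is just the standard fact that the mapping cone of a nullhomotopic map splits off a sphere, plus the retraction. (One could instead run the long exact sequence of the pair $(X,Y)$ together with relative Hurewicz, using that $\pi_{k-1}(X,Y)$ is generated over $\pi_1(Y)$ by the class of the attached cell, whose image under the boundary map is the class of $\varphi\simeq 0$; but the wedge argument is shorter and avoids basepoint technicalities.)
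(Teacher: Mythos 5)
Your proof is correct, but it takes a genuinely different route from the paper's. You identify $X$ as $Y$ with a single $(k-1)$-cell attached along the inclusion $\partial\Delta\hookrightarrow Y$, use the hypothesis to replace the attaching map by a constant one, so $X\simeq Y\vee S^{k-1}$ rel $Y$, and then the retraction onto $Y$ makes $\pi_i(Y)\to\pi_i(X)$ split injective for every $i$, killing all $\pi_i(Y)$ with $i\le k-2$ at once. The paper instead argues in two stages: van Kampen on $X=Y\cup\Delta$ together with the triviality of $\pi_1(\partial\Delta)\to\pi_1(Y)$ to get $Y$ simply connected (for $k\ge 3$), then Mayer--Vietoris plus the fact that a nullhomotopic inclusion induces the zero map on homology to get $(k-2)$-acyclicity, and finally Hurewicz; the cases $k=1,2$ are disposed of separately at the start. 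Your cell-attachment argument uses the nullhomotopy once, at the space level, and thereby avoids Hurewicz, the separate $\pi_1$ step, and basepoint gymnastics, while also absorbing $k=2$ into the general mechanism; its only cost is invoking the standard facts that homotopic attaching maps give homotopy equivalent adjunction spaces and that attaching along a constant map yields a wedge. The paper's route is more elementary in the sense of using only Mayer--Vietoris/van Kampen/Hurewicz, but it uses the hypothesis twice (in $\pi_1$ and in $H_*$). One small point in your write-up: for $k=1$ the claim that the hypothesis ``encodes'' $Y\neq\varnothing$ is an interpretive stretch (the empty map is usually regarded as nullhomotopic regardless), but the paper's own treatment of $k=1$ is equally brisk, and in the intended application Lemma \ref{lem:boundary_inclusion_nullhomotopic} assumes $|S_i|\ge 2$, so $Y$ is nonempty there in any case.
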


\begin{proof}	
	If $k = 1$ there is nothing to show since $Y$ is (-1)-connected (i.e., non-empty). If $k = 2$ then it is easy to see that $Y$ is 0-connected (i.e, connected). Now assume $k \geq 3$. In particular $\partial \Delta = Y \cap \Delta$ (and $Y$) are connected, and $X$ (and $\Delta$) are simply connected. Hence applying Van-Kampen to $X = Y \cup \Delta$ yields $\{1\} \cong \pi_1(Y) \ast_{\pi_1(\partial \Delta)}\{1\}$. Since the map $\pi_1(\partial \Delta) \to \pi_1(Y)$ is trivial, we conclude $\{1\} \cong \pi_1(Y)$, i.e, $Y$ is simply connected. Now thanks to Hurewicz it only remains to show that $Y$ is $(k-2)$-acyclic. For this we use Mayer–Vietoris. Since $\Delta$ is contractible, the Mayer–Vietoris sequence looks like $$\cdots \to H_i(\partial \Delta) \to H_i(Y) \to H_i(X) \to H_{i-1}(\partial \Delta) \to \cdots$$ for all $i > 1$. Since $X$ is $(k-2)$-connected it is also $(k-2)$-acyclic, so for all $i \leq k-2$ this becomes $$\cdots \to H_i(\partial \Delta) \to H_i(Y) \to 0.$$ We are assuming the inclusion $\partial \Delta \to Y$ is nullhomotopic, so it induces the trivial map in homology, and so by exactness we get that $H_i(Y) = 0$ for all $i \leq k-2$, as desired.
\end{proof}

\begin{lemma}
	\label{lem:boundary_inclusion_nullhomotopic}
	Under the set up of the previous lemma, if $|S_i| \geq 2$ for all $i$ then the inclusion $\partial \Delta \to Y$ is always nullhomotopic.
\end{lemma}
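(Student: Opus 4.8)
The plan is to produce an explicit nullhomotopy by locating a $(k-2)$-sphere $T$ inside $Y$ that, on the one hand, visibly bounds a disk in $Y$ and, on the other hand, becomes homotopy equivalent to $\partial\Delta$ through inclusions into a convenient intermediate subcomplex of $Y$. First the degenerate cases: if $k=1$ then $\partial\Delta=\emptyset$ and there is nothing to prove; and if $k=2$, choosing $s_1'\in S_1\setminus\{s_1\}$ and $s_2'\in S_2\setminus\{s_2\}$, the three $1$-simplices $\{s_1,s_2'\}$, $\{s_1',s_2'\}$, $\{s_1',s_2\}$ of $X$ are all distinct from $\Delta$, hence lie in $Y$, and they form an edge-path in $Y$ from $s_1$ to $s_2$, so $\partial\Delta=\{s_1,s_2\}\hookrightarrow Y$ is nullhomotopic. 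So assume $k\ge 3$ from now on.

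For each $i$ fix $s_i'\in S_i\setminus\{s_i\}$ (possible since $|S_i|\ge 2$). Write $X_0:=S_2\ast\cdots\ast S_k$, so $X=S_1\ast X_0$, let $\tau:=\{s_2,\dots,s_k\}$ be the facet of $\Delta$ opposite $s_1$ (a maximal simplex of $X_0$), and put
\[
C:=\{s_1\}\ast X_0,\qquad A:=C\setminus\mathring\Delta,\qquad T:=\{s_2,s_2'\}\ast\cdots\ast\{s_k,s_k'\}\subseteq X_0,
\]
so that $T$, being a join of $k-1$ copies of $S^0$, is a combinatorial $(k-2)$-sphere. I would first check that $A$, $X_0$, $T$, and $\{s_1'\}\ast T$ all lie in $Y$: for $A$ this is immediate from $A=C\setminus\mathring\Delta\subseteq X\setminus\mathring\Delta$; for $X_0$ (hence $T\subseteq X_0$) because no simplex of $X_0$ has $\Delta$ as a face, so $X_0$ misses $\mathring\Delta$; and for $\{s_1'\}\ast T$ because every simplex of it either lies in $T$ or contains the vertex $s_1'\notin\Delta$ and so is not a face of $\Delta$. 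Since $\{s_1'\}\ast T$ is a cone it is contractible, and $T$ is its combinatorial boundary, so the inclusion $T\hookrightarrow\{s_1'\}\ast T\subseteq Y$ is nullhomotopic. It therefore suffices to show that the two inclusions $\partial\Delta\hookrightarrow A$ and $T\hookrightarrow A$ are homotopy equivalences: given this, $(\partial\Delta\hookrightarrow A)$ is homotopic, as a map into $A$, to $(T\hookrightarrow A)\circ g$ for some homotopy equivalence $g\colon\partial\Delta\to T$, and composing with $A\hookrightarrow Y$ shows $(\partial\Delta\hookrightarrow Y)$ is homotopic to $(T\hookrightarrow Y)\circ g$, which is nullhomotopic.

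To analyze $A$, write $C$ as the union of the joins $\{s_1\}\ast\rho$ over maximal simplices $\rho$ of $X_0$; deleting $\mathring\Delta$ affects only the piece with $\rho=\tau$, giving $A=B\cup\tau$, where $B:=\{s_1\}\ast(X_0\setminus\mathring\tau)$ is a cone (hence contractible) and $B\cap\tau=\partial\tau$. Collapsing the contractible subcomplex $B$ then gives a homotopy equivalence $A\simeq A/B=\tau/\partial\tau\cong S^{k-2}$. Under this collapse the inclusion $\partial\Delta\hookrightarrow A$ becomes the quotient map $\partial\Delta\to\partial\Delta/(\{s_1\}\ast\partial\tau)\cong\tau/\partial\tau$, which collapses the contractible subcomplex $\{s_1\}\ast\partial\tau$ of $\partial\Delta\cong S^{k-2}$ and is therefore a homotopy equivalence; likewise the composite $T\hookrightarrow X_0\hookrightarrow A$ becomes the quotient map $T\to T/(T\setminus\mathring\tau)\cong\tau/\partial\tau$, collapsing the contractible subcomplex $T\setminus\mathring\tau$ of $T\cong S^{k-2}$, again a homotopy equivalence. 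Hence both inclusions into $A$ are homotopy equivalences, and the argument concludes as explained above.

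The one genuinely delicate step is the last paragraph: correctly identifying $A=C\setminus\mathring\Delta$ up to homotopy and checking that the two inclusions really do become quotients of $S^{k-2}$ by contractible subcomplexes. Once that is set up, everything rests on the standard facts that the quotient of a CW complex by a contractible subcomplex is a homotopy equivalence, and that $\{s_1\}\ast\partial\tau$ and $T\setminus\mathring\tau$ are contractible (the former is a cone, the latter is the complement of an open top cell of a combinatorial sphere, i.e.\ a disk). The remaining bookkeeping—verifying that the auxiliary subcomplexes actually sit inside $Y$ rather than merely in $X$—is routine but worth spelling out carefully, since $Y$ is obtained from $X$ by deleting the single top simplex $\Delta$.
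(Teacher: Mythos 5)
Your argument is correct, but it takes a more machinery-heavy route than the paper's. The paper's proof is a one-step geometric observation: with $t_i\in S_i\setminus\{s_i\}$, the subcomplex $Z=\{s_1,t_1\}\ast\cdots\ast\{s_k,t_k\}$ is the boundary of the $(k-1)$-dimensional cross-polytope, a combinatorial $S^{k-1}$ containing $\partial\Delta$, and $Z\setminus\mathring\Delta$ is a $(k-1)$-disk lying in $Y$ with boundary $\partial\Delta$, so the inclusion is nullhomotopic outright. You use the same essential input (the doubled vertices $s_i'$ and the cross-polytope structure they create: note your $\{s_1,s_1'\}\ast T$ is exactly the paper's $Z$), but instead of exhibiting the bounding disk directly you transfer $\partial\Delta$ across the intermediate complex $A=(\{s_1\}\ast X_0)\setminus\mathring\Delta$ by collapsing the contractible subcomplexes $B=\{s_1\}\ast(X_0\setminus\mathring\tau)$, $\{s_1\}\ast\partial\tau$ and $T\setminus\mathring\tau$, identify both $\partial\Delta\hookrightarrow A$ and $T\hookrightarrow A$ as homotopy equivalences, and then cone $T$ off from $s_1'$ inside $Y$. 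What this buys you is that the only ``sphere minus a facet is a disk'' fact you need is one dimension lower (for the $(k-2)$-dimensional hyperoctahedron $T$), with the rest carried by standard CW facts (quotients by contractible subcomplexes are homotopy equivalences, two-out-of-three); what it costs is length and several bookkeeping verifications ($\partial\Delta\cup B=A$, $\partial\Delta\cap B=\{s_1\}\ast\partial\tau$, $T\cap B=T\setminus\mathring\tau$), all of which you do check correctly, along with the low cases $k=1,2$ that the paper's disk argument handles uniformly.
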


\begin{proof}
	For each $i$ choose $t_i \in S_i \setminus \{s_i\}$. Consider the subcomplex of $X$ given by $Z = \{s_1,t_1\} \ast \cdots \ast \{s_k,t_k\}$. Since $Z$ contains $\partial \Delta$, it is enough to show that $\partial \Delta \to Z \cap Y$ is nullhomotopic. Note that $Z$ is a triangulation of $S^{k-1}$, namely it is the $(k-1)$-dimensional hyperoctahedron. The subcomplex $\partial \Delta$ is a copy of $S^{k-2}$ in here. It separates $Z$ into two components, each of which is a $(k-1)$-disk: one of them is the interior of $\Delta$ and the other is $Z \setminus \Delta$. In particular $\partial \Delta$ bounds a $(k-1)$-disk in $Z \cap Y$, namely $Z \setminus \Delta$. We conclude that $\partial \Delta \to Z \cap Y$ and hence $\partial \Delta \to Y$ is nullhomotopic.
\end{proof}

For $k \in \Z$ and fixed $s > 1$, define $\eta(k) \defeq \floor{\frac{k - 2}{2^s}}$ and recall that $\nu(k) = \floor{\frac{k - 2}{3}}$. Observe that $\eta(k)$ grows monotonically towards $\infty$ as $k$ goes to  $\infty$.

\begin{figure}[t]
	\centering
	\begin{tikzpicture}[line width=1.5pt, scale=.75]
		\coordinate (a) at (0,0);
		\coordinate (b) at ($(a) + (0, -3)$);
		\coordinate (c) at ($(a) + (2, 0)$);
		\coordinate (d) at ($(a) + (3,0)$);
		\coordinate (e) at ($(a) + (4,0)$);
		\coordinate (f) at ($(a) + (5,0)$);
		\coordinate (g) at ($(a) + (2.5,-1)$);
		\coordinate (h) at ($(a) + (3,-2)$);
		\coordinate (i) at ($(a) + (3.5,-3)$);
		
		\draw
		(a) -- (b);
		\draw[red]
		(c) -- (g) -- (d)
		(g) -- (h) -- (e);
		\draw[blue]
		(h) -- (i) -- (f);
		
		\filldraw
		(a) circle (1pt)
		(b) circle (1pt);
		\filldraw[red]
		(c) circle (1pt)
		(d) circle (1pt)
		(e) circle (1pt)
		(g) circle (1pt)
		(h) circle (1pt);
		\filldraw[blue]
		(f) circle (1pt)
		(i) circle (1pt);

		(e) circle (1pt)
		
		(i) circle (1pt);
		
	\end{tikzpicture}
	\caption{On the left is an example of an unmerged head and on the right is an example of a non-very elementary merge.}
	\label{fig:unmerged_foot_large_foot}
\end{figure}
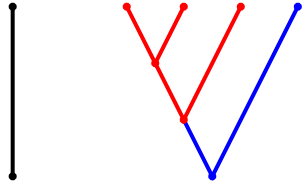

\begin{lemma}
	\label{lem:desc_morse_link_conn}
	When $z$ has no very elementary merges and no bottleneck merges, $\lk_{\downarrow, h}(z)$ is $(\eta(n) - 1)$-connected.
\end{lemma}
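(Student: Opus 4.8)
The plan is to exploit the decomposition $\lk_{\downarrow,h}(z) = (\text{split link}) \ast (\text{merge link})$ established in the preceding proposition, bound the connectivity of each factor separately, and recombine using the standard estimate $\operatorname{conn}(A\ast B)\ge\operatorname{conn}(A)+\operatorname{conn}(B)+2$. Write $T_1,\dots,T_p$ for the merges of $z$ and let $k_i$ be the number of leaves of $T_i$. Since $z$ has no very elementary merge we have $k_i\ge 3$, and since each $T_i$ is an \emph{elementary} $s$-colored tree (at most one caret of each color along any root-to-leaf path) we have $k_i\le 2^s$. Counting leaves of $z$ gives $n = u + \sum_i k_i$, where $u$ is the number of unmerged heads of $z$; in particular $n\le u + 2^s p$, which is the only arithmetic we shall need.

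First I would treat the split link. Because no $T_i$ is a bottleneck merge, each $T_i$ admits at least two colors of single-caret splitting that keep the diagram a braige (tracking the braided cross relation). One checks that the split link is then the join $S_1\ast\cdots\ast S_p$ of the $0$-dimensional complexes $S_i$ recording the legal single-caret splits available at $T_i$, with the unique maximal simplex (the one obtained by simultaneously splitting every merge) removed — exactly the configuration of Lemma~\ref{lem:remove_simplex_connectivity}; the ``no bottleneck merge'' hypothesis forces $\lvert S_i\rvert\ge 2$ for all $i$, so Lemma~\ref{lem:boundary_inclusion_nullhomotopic} supplies the required nullhomotopy of $\partial\Delta\to Y$. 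Hence the split link is $(p-2)$-connected, with the small values $p=1,2$ covered by the base cases of those lemmas. This is the step that repairs the gap in \cite{Fluch13}: the split link is genuinely a join of $0$-dimensional pieces \emph{with a top simplex deleted}, not a join.

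Next the merge link. As in the discussion preceding the lemma, the only merges one may add to $z$ without increasing $\mu$ are very elementary merges placed along the $u$ unmerged heads. Combing the braid $b$ straight exactly as in the definition of $s\pi$ realizes the merge link as mapping onto the multicolored matching arc complex on the $u$ unmerged heads, i.e.\ onto $s\matcharc(K_u)$, and the fiber over a $k$-simplex is a join of $k+1$ nonempty complexes just as in Proposition~\ref{prop:multicolored_fibers}. Feeding this, together with the connectivity of $s\matcharc(K_u)$ and of its links (smaller such complexes) from Theorem~\ref{thrm:multicolored_surface_matching_conn}, into Quillen's fiber theorem exactly as in the proof of Corollary~\ref{cor:sVEn_conn}, shows the merge link is $(\nu(u)-1)$-connected, with the usual conventions when $u\le 2$. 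Combining the two bounds, $\lk_{\downarrow,h}(z)$ is $\bigl((p-2)+(\nu(u)-1)+2\bigr)=(p+\nu(u)-1)$-connected. Since $s>1$ gives $2^s\ge 3$ we have $\lfloor\frac{u-2}{2^s}\rfloor\le\lfloor\frac{u-2}{3}\rfloor=\nu(u)$, so
\[
\eta(n)=\Big\lfloor\tfrac{n-2}{2^s}\Big\rfloor \le \Big\lfloor\tfrac{u+2^s p-2}{2^s}\Big\rfloor = p+\Big\lfloor\tfrac{u-2}{2^s}\Big\rfloor \le p+\nu(u),
\]
whence $\lk_{\downarrow,h}(z)$ is $(\eta(n)-1)$-connected, as desired.

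The step I expect to be the main obstacle is the identification of the split link in the second paragraph: proving that, under ``no very elementary merge and no bottleneck merge,'' the split link really is the join $S_1\ast\cdots\ast S_p$ of $0$-dimensional complexes minus a single maximal simplex. This forces one to understand precisely which single-caret (and which iterated) splittings of an elementary colored merge tree still yield a braige, and to verify that distinct legal first splittings of a fixed $T_i$ are pairwise incompatible while splittings at distinct $T_i$'s are jointly compatible — all of this modulo the braided cross relation. A secondary point requiring care is checking that the fixed braid $b$ of $z$ does not obstruct the fiber computation for the merge link, so that Quillen's theorem applies verbatim as in Corollary~\ref{cor:sVEn_conn}.
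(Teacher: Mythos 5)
Your proof follows the paper's argument essentially step for step: the same split-link/merge-link join decomposition, the same use of Lemmas \ref{lem:remove_simplex_connectivity} and \ref{lem:boundary_inclusion_nullhomotopic} (with the no-bottleneck hypothesis supplying at least two legal splits per merge) to get $(p-2)$-connectivity of the split link, the same identification of the merge link with the very elementary braige complex on the unmerged heads handled as in Corollary \ref{cor:sVEn_conn}, and the same leaf count $n\le u+2^s p$. The only real difference is that by keeping the floors intact your final estimate gives $\eta(n)\le p+\nu(u)$ and hence the full $(\eta(n)-1)$-connectivity as stated, whereas the paper's own inequality chain only records $(\eta(n)-2)$-connectivity, which is still all that Corollary \ref{cor:multicolored_desc_link_conn} requires.
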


\begin{proof}
	First note that if we consider $s = 1$, then very elementary and elementary mean the same thing and there is nothing more to show. We will maintain that $s > 1$. Observe that there are no very elementary merges, so the following mentions of merges are elementary merges. That is, every collection of merges will consist of at least 3 strands. Call a strand not involved in any merges an \textit{unmerged head}. Also recall that a non-very elementary merge is a merge with at least three feet. See Figure \ref{fig:unmerged_foot_large_foot} for an example of each of these. Let $k_u$ be the number of unmerged heads and $k_e$ be the number of non-very elementary merges. Observe that each non-very elementary merge can have at most one merge of each color in order to stay elementary, thus each non-very elementary merge has at most $2^s$ feet. Hence the number of non-very elementary merges satisfies $\frac{n - k_u}{2^s} \leq k_e$. Therefore by multiplication of $2^s$ we have $n - k_u \leq 2^sk_e$, hence $n \leq 2^sk_e + k_u$.
	
	Since $k_e$ counts the number of non-very elementary merges there are, then there are $k_e$ ways of splitting non-very elementary merges. All but one of these merges could be split and results in an element of $s\elbraigecpx_n$. The one scenario where we are not allowed to split a non-very elementary merge is when we split all of the merges at once as the resulting vertex would no longer be in $s\elbraigecpx_n$. So all legal ways of splitting non-very elementary merges would reduce $\mu_k(z)$ for some $k$, resulting in a vertex of lower height. Thus all but the largest merge contribute a nonempty vertex to the split link. This shows that the split link is the join of all these vertices with a simplex removed representing the case where all merges are split. Observe that the join of $k_e$ nonempty things is $(k_e - 2)$-connected. Moreover, the simplex being removed that represents the maximal number of splits we can apply to $z$ is itself a $k_e$-simplex. Since $z$ has no bottlenecks, by Lemma \ref{lem:boundary_inclusion_nullhomotopic} we have the boundary of this complex included into the simplex with the interior of the simplex removed is nullhomotopic. Therefore, by Lemma \ref{lem:remove_simplex_connectivity} we have the whole complex with the maximal simplex removed is $(k_e - 2)$-connected. This complex is the split link, thus the split link is $(k_e - 2)$-connected.
	
	Let $X \subseteq \{1, 2, \dots, n\}$ be the set of indices of the strands that were already involved in a non-very elementary merge of $z$. Denote the subcomplex of $s\velbraigecpx_{n}$ which does not allow strands indexed by elements of $X$ to be merged by $s\velbraigecpx_{n, X}$. Since the indices in $X$ are not allowed to be used in further merges, it follows that each element in the merge link that does not use a strand labeled by $X$, i.e. every combination of merges applied to $z$ with indices not from $X$, represents an element of $s\velbraigecpx_{n, X}$. Hence the merge link of $z$ is isomorphic to $s\velbraigecpx_{n, X}$. Observe $s\velbraigecpx_{n, X}$ maps to $s\matcharc(K_{n - |X|}, S;P)$ where $S$ is a surface with $X$ as its boundary points by restricting $(s\pi)$ to $s\velbraigecpx_{n, X}$ and the colored arcs in the codomain do not have endpoints in $X$. By Corollary \ref{cor:sVEn_conn} where $n$ is replaced with $n - |X|$, $s\velbraigecpx_{n, X}$ and hence the merge link is $(\nu(n - |X|) - 1)$-connected. 
	
	As $\lk_{\downarrow, h}(z)$ is the join of the split link and the merge link and $k_e - 2 + \nu(n - |X|) - 1 + 2 = k_e + \nu(n - |X|) - 1$, we have that $\lk_{\downarrow, h}(z)$ is $(k_e + \nu(n - |X|) - 1)$-connected.  Recall that $s > 1$ so $2^s > 3$. Due to properties of inequalities and $n - |X| = k_u$ we have: \\
	
	\centering
	\begin{tabular}{lll}
		$k_e + \nu(n - |X|) - 1$ &  $\geq $ & $\frac{n - k_u}{2^s} + \nu(n - |X|) - 1 $\\
		& $=$ & $\frac{n - k_u}{2^s} + \floor{\frac{n - |X| - 2}{3}} - 1 $ \\
		& $\geq$ & $\frac{n - k_u}{2^s} + \frac{n - |X| - 2}{3} - 2 $ \\
		& $\geq$ & $\frac{n - k_u}{2^s} + \frac{n - |X| - 2}{2^s} - 2 $ \\
		& $=$ & $\frac{n - k_u}{2^s} + \frac{k_u - 2}{2^s} - 2 $ \\
		& $=$ & $\frac{n - 2}{2^s} - 2$ \\
		& $\geq$ & $\floor{\frac{n - 2}{2^s}} - 2$ \\
		& $=$ & $\eta(n) - 2$\\
	\end{tabular}	
\\
By this inequality, we can conclude that $\lk_{\downarrow,h}(z)$ is at least $(\eta(n) - 2)$-connected.
\end{proof}

We are now able to prove the desired result of this subsection, allowing us to prove the main result of this paper. The Morse Lemma as stated in Section \ref{sec:def_stein_space} will be of importance for the following two proofs.

\begin{corollary}
	\label{cor:multicolored_desc_link_conn}
	$s\elbraigecpx_n$ is $(\eta(n)-1)$-connected. Hence for any vertex $x$ in $sX_{br}$ with $f(x) = n$, $\lk_{\downarrow}(x)$ $(\eta(n)-1)$-connected.
\end{corollary}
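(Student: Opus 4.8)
The plan is to combine the connectivity estimates for the split and merge links established in Lemmas~\ref{lem:desc_morse_link_contra}, \ref{lem:bottleneck_merge}, and \ref{lem:desc_morse_link_conn} with the Morse Lemma from Section~\ref{sec:def_stein_space}, building up $s\elbraigecpx_n$ from $s\velbraigecpx_n$ by attaching vertices of $s\elbraigecpx_n \setminus s\velbraigecpx_n$ in increasing order of the Morse function $h = (\mu, f)$. The base case is Corollary~\ref{cor:sVEn_conn}, which gives that $s\velbraigecpx_n$ is $(\nu(n)-1)$-connected; since $s > 1$ forces $2^s > 3$, we have $\eta(n) \le \nu(n)$, so in particular $s\velbraigecpx_n$ is $(\eta(n)-1)$-connected. (For $s = 1$ there is nothing to prove, as elementary and very elementary coincide and $s\elbraigecpx_n = s\velbraigecpx_n$.)

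Now I would run the Morse-theoretic induction on the complex $s\elbraigecpx_n$ with respect to $h$. The vertices of $s\elbraigecpx_n$ are of two kinds: those in $s\velbraigecpx_n$, which form the sublevel set at the bottom of the Morse filtration, and those $z \in s\elbraigecpx_n \setminus s\velbraigecpx_n$, i.e.\ those possessing at least one merge with more than two feet. For every such $z$, the descending link $\lk_{\downarrow,h}(z)$ is either contractible (when $z$ has a very elementary merge, by Lemma~\ref{lem:desc_morse_link_contra}, or a bottleneck merge, by Lemma~\ref{lem:bottleneck_merge}) or is $(\eta(n)-1)$-connected (when $z$ has neither, by Lemma~\ref{lem:desc_morse_link_conn}). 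In all cases $\lk_{\downarrow,h}(z)$ is $(\eta(n)-1)$-connected. By the Morse Lemma, each such vertex attachment raises the connectivity threshold to at least $\eta(n)-1$, so passing from $s\velbraigecpx_n$ to all of $s\elbraigecpx_n$ the inclusion is $\eta(n)$-connected; since $s\velbraigecpx_n$ is already $(\eta(n)-1)$-connected, we conclude $s\elbraigecpx_n$ is $(\eta(n)-1)$-connected.

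The main obstacle is bookkeeping the Morse function rather than any genuinely new idea: one must check that $h$ is a legitimate Morse function on $|s\elbraigecpx_n|$ (every simplex has a unique $h$-maximal vertex, which follows from the analysis preceding Lemma~\ref{lem:desc_morse_link_conn}: along an edge $x < y$ either $\mu$ strictly decreases or $\mu$ is constant and $f$ strictly increases, so $h$ is strictly monotone and never constant on a positive-dimensional cell), that the sublevel set below the smallest $h$-value of a non-very-elementary vertex is exactly $s\velbraigecpx_n$, and that the Morse Lemma applies in the form needed (part (2), using that $\dlk(z)$ is $(\eta(n)-1)$-connected for \emph{all} vertices $z$ above the base level). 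Finally, the last sentence of the corollary follows immediately: as noted at the end of Section~\ref{sec:def_stein_space}, for a vertex $x$ of $sX_{br}$ with $f(x) = n$ the descending link $\lk_{\downarrow}(x)$ is isomorphic to $s\elbraigecpx_n$, hence is $(\eta(n)-1)$-connected.
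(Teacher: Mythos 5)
Your proposal is correct and follows essentially the same route as the paper: build $s\elbraigecpx_n$ from $s\velbraigecpx_n$ via the Morse function $h=(\mu,f)$, use Corollary \ref{cor:sVEn_conn} together with $\eta\le\nu$ for the base, handle the descending links of vertices outside $s\velbraigecpx_n$ by Lemmas \ref{lem:desc_morse_link_contra}, \ref{lem:bottleneck_merge}, and \ref{lem:desc_morse_link_conn}, and conclude with the Morse Lemma. Your extra bookkeeping (monotonicity of $h$ on edges, identifying the bottom sublevel set with $s\velbraigecpx_n$, and the $s=1$ case) only makes explicit what the paper leaves implicit.
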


\begin{proof}
	If $s = 1$, by \cite[Theorem 3.8]{Bux16} $s\elbraigecpx_n = s\velbraigecpx_n$ is $(\eta(n) - 1)$-connected. Now assume $s > 1$. By Corollary \ref{cor:sVEn_conn} and that $\eta \leq \nu$ we know $s\velbraigecpx_{n}$ is at least $(\eta(n) - 1)$-connected. Moreover, given $z \in s\elbraigecpx_n \setminus s\velbraigecpx_{n}$, we know by Lemma \ref{lem:desc_morse_link_contra}, Lemma \ref{lem:bottleneck_merge}, and Lemma \ref{lem:desc_morse_link_conn}, $\lk_{\downarrow, h}(z)$ is contractible or at least $(\eta(n) - 2)$-connected. It follows from the Morse Lemma that $s\elbraigecpx_n$ is at least $(\eta(n) - 1)$-connected.
\end{proof}

Along with the previous corollary we also need the connectivity of the pair $(sX_{br}^{\leq n}, sX_{br}^{<n})$ for $n \geq 1$.

\begin{proposition}
	\label{prop:multicolored_filtration_conn}
	For each $n \geq 1$, the pair $(sX_{br}^{\leq n}, sX_{br}^{<n})$ is $(\eta(n) - 1)$-connected for $s > 1$, and $(1X_{br}^{\leq n}, 1X_{br}^{<n})$ is $(\nu(n) - 1)$-connected. 
\end{proposition}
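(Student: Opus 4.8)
The statement is essentially an application of the Morse Lemma (as stated in Section~\ref{sec:def_stein_space}) to the height function $f$ on $sX_{br}$, together with the connectivity of descending links established in Corollary~\ref{cor:multicolored_desc_link_conn}. Recall that $f$ extends to a Morse function on $|sX_{br}|$ since every simplex of $sX_{br}$ has a unique vertex maximizing the number of feet. By the remark preceding the Morse Lemma, for a vertex $x$ with $f(x)=n$ the descending link $\dlk(x)$ with respect to $f$ is isomorphic to $s\elbraigecpx_n$. So the plan is: invoke part~(1) of the Morse Lemma at the level $t=n$, using that $\dlk(y)\cong s\elbraigecpx_n$ is $(\eta(n)-1)$-connected (for $s>1$) by Corollary~\ref{cor:multicolored_desc_link_conn}, to conclude that the pair $(sX_{br}^{\le n}, sX_{br}^{<n})$ is $\eta(n)$-connected, hence in particular $(\eta(n)-1)$-connected. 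For $s=1$ one uses instead that $1\elbraigecpx_n = 1\velbraigecpx_n$ is $(\nu(n)-1)$-connected by Corollary~\ref{cor:sVEn_conn} (equivalently \cite[Theorem~3.8]{Bux16}), so the Morse Lemma gives that $(1X_{br}^{\le n},1X_{br}^{<n})$ is $\nu(n)$-connected, hence $(\nu(n)-1)$-connected.

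\begin{proof}
Recall that every simplex of $sX_{br}$ has a unique vertex maximizing the number-of-feet function $f$, so $f$ extends to a Morse function on $|sX_{br}|$ in the sense of Section~\ref{sec:def_stein_space}. For a vertex $x$ of $sX_{br}$ with $f(x)=n$, the descending link $\dlk(x)$ with respect to $f$ is isomorphic to $s\elbraigecpx_n$: indeed $\dlk(x)$ is the realization of the poset of elementary splittings and mergings that decrease $f$, which is precisely the complex $s\elbraigecpx_n$ of dangling elementary multicolored $n$-braiges (see Figure~\ref{fig:desc_lk_to_sEBn}).

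Fix $n\ge 1$. Suppose first that $s>1$. By Corollary~\ref{cor:multicolored_desc_link_conn}, for every vertex $x$ with $f(x)=n$ the descending link $\dlk(x)\cong s\elbraigecpx_n$ is $(\eta(n)-1)$-connected. Applying part~(1) of the Morse Lemma with $h=f$, $t=n$, and $k=\eta(n)$, we conclude that the pair $(sX_{br}^{\le n}, sX_{br}^{<n})$ is $\eta(n)$-connected, hence a fortiori $(\eta(n)-1)$-connected.

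Now suppose $s=1$. Then elementary and very elementary coincide, so $1\elbraigecpx_n = 1\velbraigecpx_n$, which is $(\nu(n)-1)$-connected by Corollary~\ref{cor:sVEn_conn} (equivalently by \cite[Theorem~3.8]{Bux16}). Thus for every vertex $x$ of $1X_{br}$ with $f(x)=n$ the descending link $\dlk(x)\cong 1\elbraigecpx_n$ is $(\nu(n)-1)$-connected. Applying part~(1) of the Morse Lemma with $h=f$, $t=n$, and $k=\nu(n)$, the pair $(1X_{br}^{\le n}, 1X_{br}^{<n})$ is $\nu(n)$-connected, hence $(\nu(n)-1)$-connected.
\end{proof}

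The real content has already been carried out, namely the connectivity estimates for $s\elbraigecpx_n$ in Corollary~\ref{cor:multicolored_desc_link_conn}; the present step is a formal application of Morse theory. The only point requiring a little care is the identification $\dlk(x)\cong s\elbraigecpx_n$ and the verification that $f$ is a genuine Morse function in the required sense, both of which are handled by the discussion at the start of Section~\ref{sec:desc_link_conn}.
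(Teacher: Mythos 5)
Your proposal is correct and takes essentially the same route as the paper: identify the descending link of a height-$n$ vertex with $s\elbraigecpx_n$, quote its connectivity from Corollary~\ref{cor:multicolored_desc_link_conn} (with the $s=1$ case handled via $1\elbraigecpx_n=1\velbraigecpx_n$), and apply the Morse Lemma to the height function $f$. Your reading of the Morse Lemma in fact yields the slightly stronger conclusion that the pair is $\eta(n)$-connected (resp.\ $\nu(n)$-connected), which of course implies the stated $(\eta(n)-1)$- (resp.\ $(\nu(n)-1)$-) connectivity.
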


\begin{proof}
	Let $x$ be a vertex in $sX_{br}^{=n}$. By Corollary \ref{cor:multicolored_desc_link_conn}, $\lk_{\downarrow}(x)$ is at least $(\eta(n) - 1)$-connected for $s > 1$, or $(\nu(n) - 1)$-connected when $s=1$. Therefore, by the Morse Lemma, we have the pair $(sX_{br}^{\leq n}, sX_{br}^{<n})$ is $(\eta(n) - 1)$-connected for $s > 1$, and $(1X_{br}^{\leq n}, 1X_{br}^{<n})$ is $(\nu(n) - 1)$-connected.
\end{proof}

\section{Proof of the main theorem}
\label{sec:proof_main_theorem}
We are now prepared to apply Brown's Criterion.

\begin{proof}[Proof of Main Theorem]	
	We wish to apply Brown's Criterion to $s\Vbr$. By Corollary \ref{cor:sX_br_contractible} we know that $sX_{br}$ is contractible. Moreover, we know the stabilizer of every simplex is of type $\F_\infty$ by Lemma \ref{lem:sX_br_cell_stab}. We also know that the filtration  $(sX_{br}^{\leq n})_{n \in \N}$ is cocompact by Lemma \ref{lem:sX_{br}_cocompact} and lastly that the pair $(sX_{br}^{\leq n}, sX_{br}^{<n})$ is $(\eta(n) - 1)$-connected or if $s = 1$ then $(1X_{br}^{\leq n}, 1X_{br}^{<n})$ is $(\nu(n) - 1)$-connected. Therefore by Brown's Criterion we conclude that $s\Vbr$ is of type $\F_\infty$.
\end{proof}



\newcommand{\etalchar}[1]{$^{#1}$}

%
%
%
%

%
%
%
%
%
%
%
%

\end{document}